\documentclass[12pt]{article}
\newcommand{\acks}[1]{\section*{Acknowledgments}#1}
\usepackage{blindtext}
\usepackage[round]{natbib}
\usepackage{amsthm}
\usepackage{amssymb}
\usepackage[colorlinks=true,
            linkcolor=blue,
            citecolor=blue,
            urlcolor=blue]{hyperref}

\usepackage[linesnumbered,ruled,vlined]{algorithm2e}
\SetKwInput{KwInput}{Input}              \SetKwInput{KwOutput}{Output}

\usepackage{array}
\usepackage[caption=false,font=normalsize,labelfont=sf,textfont=sf]{subfig}
\usepackage{textcomp}
\usepackage{stfloats}
\usepackage{url}
\usepackage{verbatim}
\usepackage{bm}
\usepackage{booktabs}
\usepackage{bbm}
\usepackage{mathrsfs}
\usepackage{amsfonts}
\usepackage{amsmath}
\usepackage{graphicx}

\usepackage{placeins}
\usepackage{rotating}

\usepackage{xcolor}

\usepackage{multirow}
\usepackage{lscape}

\usepackage[noend]{algpseudocode}

\DeclareMathOperator*{\argmin}{arg\,min}

\makeatletter
\def\BState{\State\hskip-\ALG@thistlm}
\makeatother

\usepackage{enumitem}


\newtheorem{theorem}{Theorem}

\newtheorem{lemma}{Lemma}
\newtheorem{proposition}{Proposition}
\newtheorem{corollary}{Corollary}
\newtheorem{remark}{Remark}

\usepackage{lastpage}

\usepackage{lastpage}

\usepackage{lineno}
\usepackage{setspace}
\usepackage[margin=0.7in]{geometry}
\title{BOOOM: Loss-Function-Agnostic Black-Box Optimization over Orthonormal Manifolds for Machine Learning and Statistical Inference}

\author{
Beomchang Kim$^{1}$ \and
Subhrajyoty Roy$^{2}$ \and
Priyam Das$^{1}$ \\[0.5em]
$^{1}$Department of Biostatistics, Virginia Commonwealth University, \\
Richmond, VA, USA \\
$^{2}$Department of Statistics and Data Science, Washington University in St.\ Louis, \\
St.\ Louis, MO, USA
}

\date{}
       
\begin{document}
\maketitle
\begin{abstract}
Optimization over the Stiefel manifold $\mathrm{St}(p,d)$, the set of $p \times d$ column-orthonormal matrices, is fundamental in statistics, machine learning, and scientific computing, yet remains challenging in the presence of non-convex, non-smooth, or black-box objectives. Existing methods largely rely on either convex relaxations or gradient-based Riemannian optimization, limiting applicability in derivative-free and highly multimodal settings. We propose \textsc{BOOOM} (Black-box Optimization Over Orthonormal Manifolds), a general-purpose framework for loss-function-agnostic optimization on $\mathrm{St}(p,d)$. The key idea is a global Givens rotation-based parametrization that maps the manifold to an unconstrained Euclidean angle space while preserving feasibility exactly. Building on this representation, BOOOM employs a structured, parallelizable, derivative-free search based on Recursive Modified Pattern Search, enabling systematic exploration through plane-wise rotations without requiring gradient information and facilitating escape from poor local optima. We establish a unified theoretical framework showing equivalence between angle-space and manifold optimization, transfer of stationarity, and global convergence in probability under mild conditions. Empirical results across diverse problems, including heterogeneous quadratic optimization, low-rank and sparse matrix decomposition, independent component analysis, and orthogonal joint diagonalization, among other widely studied settings, demonstrate strong performance relative to state-of-the-art methods, particularly in non-smooth and highly multimodal regimes. We further illustrate its practical utility through a novel supervised PCA formulation applied to metabolomics data in colorectal cancer.
\end{abstract}

{\it Keywords:} Manifold-constrained optimization, Stiefel manifold, Derivative-free optimization, Black-box optimization, Non-convex optimization, Riemannian optimization

\clearpage
\section{Introduction}\label{sec:intro}

We consider optimization problems of the form
\begin{equation}
    \min_{Q \in \mathrm{St}(p, d)} f(Q),
    \label{eq:stiefel_problem}
\end{equation}
\noindent where 
\begin{equation}
    \mathrm{St}(p,d)=\{Q\in\mathbb{R}^{p\times d}: Q^\top Q = I_d\}
    \label{eqn:steifel-manifold}
\end{equation}
\noindent is the set of $p \times d$ matrices with orthonormal columns. Optimization over the Stiefel manifold $\mathrm{St}(p, d)$ is a central problem in modern machine learning, statistics, and scientific computing. Its importance stems from the pervasive role of orthogonality constraints in modeling, identifiability, and dimensionality reduction. Some classic and prominent examples include independent component analysis (ICA) \citep{hyvarinen2000independent, hyvarinen1999fast, bell1995information, Amari1995}, orthogonal joint diagonalization (OJD) \citep{cardoso1996jacobi, pham2001joint}, orthogonal factor rotations such as varimax \citep{kairser1958varimax, jennrich2001simple}, principal component analysis (PCA) and its variants \citep{jolliffe2002principal}, as well as low-rank matrix estimation and robust PCA formulations \citep{candes2011robust, netrapalli2014nonconvex, roy2024robust}. In scientific computing, similar structures arise in Rayleigh-Ritz type eigenvalue problems and electronic structure calculations, including Kohn-Sham density functional theory \citep{kohn1965self, Yang2009kssolv, Jiao2022kssolv, luo2025direct} as well as subspace methods in numerical linear algebra \citep{saad2011numerical}. These problems typically exhibit highly non-convex landscapes characterized by multiple local minima, rotational invariances, and strong coupling across variables.

To accommodate this diverse range of applications under a single optimization framework, the objective function $f:\mathrm{St}(p,d) \to \mathbb{R}$ is assumed to be completely general: it may be non-convex, non-smooth, discontinuous, or available only through function evaluations (black-box setting), with no access to gradient or higher-order information. The black-box setting has also emerged in various contexts in modern machine learning, deep learning, and computing scenarios. Consider the following motivating examples:
\begin{enumerate}
    \item Recent literature has shown that it is possible to construct an adversarial example to a deep neural network model (including the modern multi-modal models such as Vision transformers) by means of rotation of the original image~\citep{athalye2018synthesizing, jiang2026diversifying}. Typically, producing these attacks requires performing a projected gradient descent (PGD)~\citep{bryniarski2021evading}, hence requiring access to the model parameters. However, for a closed-source model, the adversary will maximize the loss under orthogonal perturbation of the input using only functional evaluations (API access) of the model.
    \item One of the fundamental limitations of classical recurrent neural network (RNN) architectures is the vanishing and exploding gradient problem, which makes training unstable, particularly for long sequences. Orthogonal neural networks, where the weight matrices are constrained to be orthogonal, have been proposed as an effective remedy since orthogonality preserves the norm of the propagated signal and improves gradient flow~\citep{vorontsov2017, huang2018orthogonal, li2019orthogonal}. Similar orthogonality constraints have also shown promising performance in modern transformer-based architectures~\citep{fei2022vit}. Despite these advantages, such networks have not seen widespread adoption in practice because enforcing orthogonality during training is notoriously difficult, often requiring architecture-specific surrogate parameterizations on which backpropagation can be performed. An objective-agnostic optimization framework that relies only on black-box function evaluations offers an alternative approach, enabling the training of orthogonality-constrained networks without explicit backpropagation and using only forward passes of the model.
    \item In scientific quantum computing, transformations between qubits are represented by orthogonal (more precisely, unitary) matrices~\citep{nielsen2010quantum}. Designing algorithms for quantum machine learning, therefore, often requires optimization over orthogonality-constrained parameter spaces. In practice, however, the instability and noise inherent in quantum systems make it difficult to evaluate the objective function and its gradients accurately. In many cases, only a noisy Monte Carlo estimate of the objective is available through repeated simulation or measurement, which makes gradient-based optimization challenging and motivates the use of black-box optimization methods. 
\end{enumerate}

As an illustration of the first example, consider a pretrained deep neural network for image classification (e.g., a ResNet~\citep{he2016deep}) accessed purely as a black-box, where only forward predictions are available and no information about gradients or internal architecture is exposed. Let $g(x)=(p_1(x),\dots,p_C(x))$ denote the class probabilities produced by the network for an input $x\in\mathbb{R}^p$. Let $\mathcal{R}(U;x)$ denote the transformed input obtained by applying the orthogonal parameter $U\in\mathrm{St}(p,k)$ (e.g., via a rendering or viewpoint transformation). We define
\[
f(U):=p_{y^\star}(\mathcal{R}(U;x))-\max_{1\le j\le C} p_j(\mathcal{R}(U;x)), \qquad U\in\mathrm{St}(p,k),
\]
where $y^\star$ denotes the true class. This objective is always nonpositive and satisfies $-1 \le f(U) \le 0$, as it is the difference between two probabilities. It measures the gap between the true-class confidence and the largest predicted confidence. Minimizing $f(U)$ yields adversarial (worst-case) views where the true class is strongly suppressed, while maximizing $f(U)$ seeks one of the best views; the largest possible value is $0$, attained when the true class is the most probable label. Since only evaluations of $f(U)$ are available and gradients are inaccessible, this forms a genuine black-box optimization problem over the Stiefel manifold. \textsc{BOOOM} addresses this setting through derivative-free search using structured Givens rotations, which we further detail in later sections. Figure~\ref{fig:resnet_blackbox} illustrates how such orthogonal transformations can substantially alter model predictions.
\begin{figure}[!h]
\centering
\includegraphics[width=0.95\textwidth]{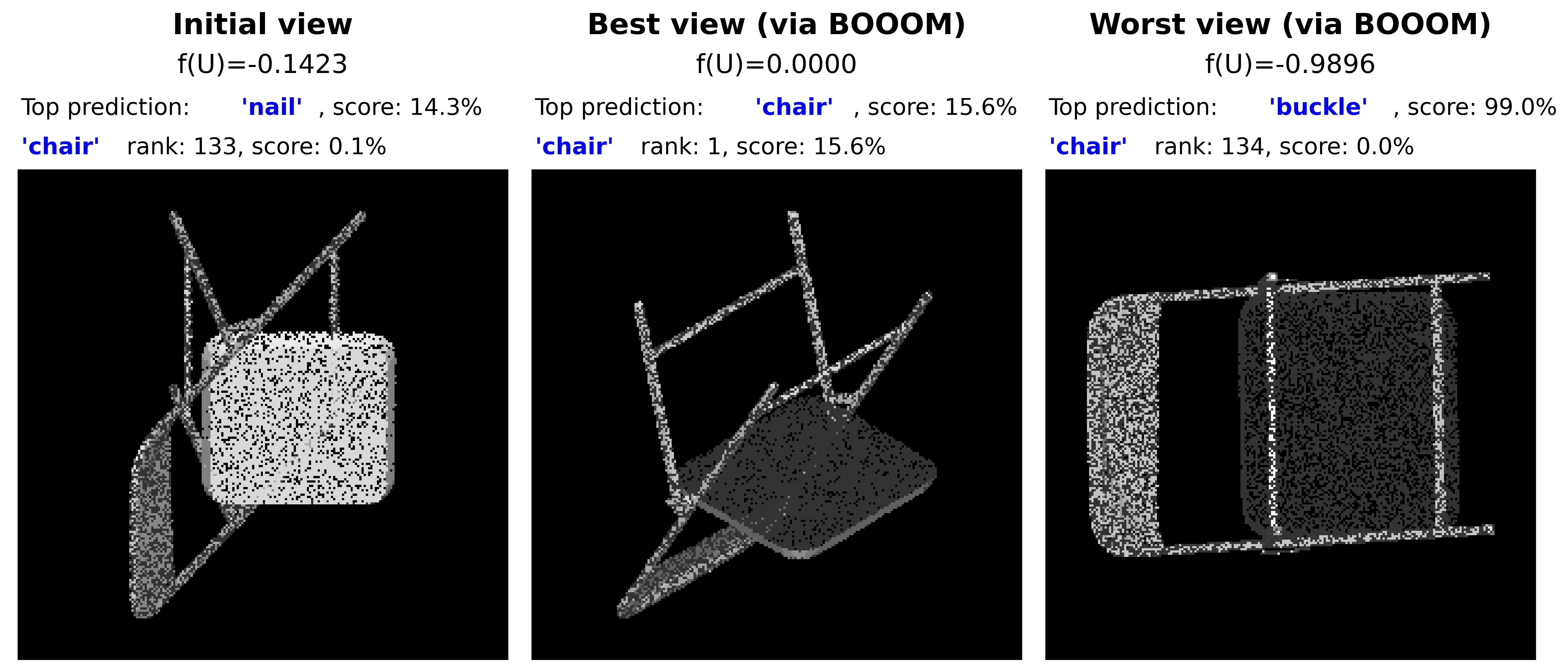}
\caption{
Black-box optimization over orthogonal transformations in a pretrained image classifier (ResNet). 
The objective $f(U)$ is evaluated using only forward passes of the network, without access to gradients or model internals. Starting from an initial view (left), \textsc{BOOOM} explores $U \in \mathrm{St}(p,k)$ to identify transformations that minimize $f(U)$ (worst-case view, right) and maximize $f(U)$ (one of the best-case views, center, where $f(U)\approx 0$). The figure demonstrates that structure-preserving orthogonal transformations can induce highly non-linear and non-convex changes in model predictions, underscoring the need for derivative-free black-box optimization on orthogonality-constrained domains.
}
\label{fig:resnet_blackbox}
\end{figure}

\subsection{Challenges and Existing Literature}\label{sec:lit-review}

Despite this ubiquity, optimization over $\mathrm{St}(p,d)$ remains fundamentally challenging. The feasible set is nonconvex, and even simple quadratic objectives can exhibit highly multimodal landscapes with numerous local minima and saddle points \citep{Edelman1998}. This is because many problems defined on $\mathrm{St}(p,d)$ exhibit substantial symmetry, such as sign, permutation, or rotational invariances, which lead to objective landscapes with flat regions and multiple equivalent optima. These difficulties are heightened in modern applications, where objective functions are often non-smooth, discontinuous, noisy, or defined implicitly through simulation or composite pipelines. In such black-box settings, derivative information may be unavailable, unreliable, or computationally prohibitive to obtain \citep{Conn2009,Rios2013}. Furthermore, the geometry of the Stiefel manifold introduces additional complications: iterates must remain orthonormal at all times, requiring projections, retractions, or structured parametrizations, each of which introduces computational overhead or distorts the search dynamics.

A substantial body of work has addressed these challenges through Riemannian optimization, which generalizes classical first- and second-order methods to manifold settings \citep{AbsilMahonySepulchre2008,Boumal2023}. Algorithms such as Riemannian gradient descent, conjugate gradient, and trust-region methods \citep{boumal2014manopt,Edelman1998} exploit local geometric structure and are effective when smoothness and derivative information are available; however, they are fundamentally local in nature and often sensitive to initialization, particularly in highly non-convex settings. In particular, they are designed to follow descent directions and therefore lack intrinsic mechanisms for escaping local minima or exploring distant regions of the search space. Alternative strategies based on parametrizations, including those built on Givens rotations or other structured factorizations, have also been explored in contexts such as orthogonal matrix optimization, joint diagonalization, and factor rotations \citep{cardoso1996jacobi, pham2001joint, jennrich2001simple, Edelman1998}. While such parametrizations can ensure feasibility and provide interpretable updates, existing methods typically rely on gradient information or problem-specific structure, and similarly do not incorporate systematic exploration mechanisms to traverse multiple basins of attraction. As a result, they are not well-suited to problems where the objective is non-differentiable.

An alternative line of work focuses on problem-specific algorithms or convex relaxations. For instance, semidefinite programming relaxations have been developed for certain classes of quadratic optimization problems \citep{burer2003nonlinear,gilman2025semidefinite}, while convex surrogates are widely used in robust PCA and related formulations \citep{candes2011robust, guo2017godec+}. Although effective in tailored settings, these approaches are not general-purpose and often fail to extend to heterogeneous objectives or large-scale problems. In parallel, stochastic metaheuristics such as Genetic Algorithms (GA) and Simulated Annealing (SA) provide mechanisms for global exploration \citep{Fraser1957,kirkpatrick1983optimization}, but typically suffer from poor scalability and limited theoretical guarantees in high-dimensional spaces \citep{Geris2012}. Moreover, extensions of such metaheuristic frameworks to orthogonality-constrained settings, such as optimization over the Stiefel manifold, remain relatively underexplored to the best of our knowledge, as maintaining feasibility and constructing effective search operators under nonconvex matrix constraints is challenging.

Taken together, these landscapes reveal a gap in literature: there is currently no unified optimization framework for the Stiefel manifold that is simultaneously objective-function-agnostic, strictly feasibility-preserving, capable of global exploration beyond local descent, and supported by rigorous theoretical guarantees. We address this gap below by proposing a novel optimization framework, called ``BOOOM''.

\subsection{Main contributions}\label{sec:booom-contributions}

In this work, we introduce \textsc{BOOOM} (Black-box Optimization Over Orthonormal Manifolds), a general-purpose framework for derivative-free optimization over $\mathrm{St}(p,d)$ that directly addresses the challenges outlined above. The central idea is to reparameterize the manifold using a complete sequence of Givens rotations. Building on classical factorization results for orthogonal matrices \citep{Hurwitz1963,jiang2022givens}, we show that any $Q \in\mathrm{St}(p,d)$ can be expressed as a finite product of planar rotations applied to a fixed orthonormal base. This induces a smooth, surjective mapping from a Euclidean angle space to the manifold, thereby transforming the constrained optimization problem into an unconstrained search in $\mathbb{R}^{\binom{p}{2}}$ while preserving feasibility exactly at every iterate. As a consequence, the optimization over the space of orthonormal matrices can be carried out entirely in angle space without ever violating the constraints. Unlike projection- or retraction-based methods, usage of this reparameterization eliminates the need for constraint enforcement, all the while maintaining the full expressivity of the original problem, ensuring that no feasible solutions are excluded.

On top of this parametrization, \textsc{BOOOM} employs a derivative-free optimization strategy based on Recursive Modified Pattern Search (RMPS) \citep{kim2026smart, Das2023MsiCOR, Das2023RMPSH, Das2022, Das2021}, a class of direct search methods rooted in classical pattern search and coordinate polling schemes~\citep{torczon1997convergence,Kolda2003}. At each iteration, the algorithm explores candidate solutions by applying planar rotations corresponding to coordinate-wise perturbations in angle space, resulting in small Givens rotations applied to the current evaluations. It systematically explores all rotation planes, and naturally supports parallel computation. Additionally, the algorithm uses an adaptive step-size refinement that allows for a balance between local and global exploration.

We explore the performance of the BOOOM algorithm both theoretically and empirically. The key theoretical results include demonstrating the existence of the angle-space parametrization, the equivalence between the lack of descent in angle space and Riemannian stationarity on the Stiefel manifold, and characterizing the convergence behavior at both local and global scales. We also evaluate \textsc{BOOOM} across a broad spectrum of problems spanning machine learning, statistics, and scientific computing. These include classical multimodal benchmark functions adapted to orthogonality constraints, heterogeneous quadratic optimization, low-rank plus sparse matrix decomposition, independent component analysis, orthogonal joint diagonalization, and Kohn--Sham Rayleigh--Ritz eigenvalue problems. Across these diverse settings, \textsc{BOOOM} consistently achieves competitive or superior objective values compared to state-of-the-art Riemannian optimization methods \citep{boumal2014manopt} and problem-specific baselines, particularly in non-smooth and highly multimodal regimes where traditional approaches struggle.

In summary, the main contributions of this work are:
\begin{itemize}
    \item Proposing a novel, parallelizable, general-purpose, objective-function-agnostic optimization framework \textsc{BOOOM} for orthogonality-constrained problems operating solely via function evaluations.
    \item A unified theoretical framework establishing stationarity, global convergence in probability, and per-run efficiency guarantees.
    \item Extensive empirical validation across diverse nonconvex, non-smooth, and application-driven problem settings, spanning across multiple fields.
    \item Demonstration of BOOOM as a flexible optimization engine for novel objective formulations, exemplified via a supervised PCA framework for identifying key metabolites governing colorectal cancer status.
\end{itemize}

The remainder of the paper is organized as follows. In Section~\ref{sec:prelim}, we introduce the proposed BOOOM framework, including the underlying Givens rotation-based parametrization and the RMPS-based optimization strategy. Section~\ref{sec:theory} develops the theoretical properties of the method, establishing connections between angle-space optimization and Riemannian stationarity, along with global convergence guarantees. In Sections~\ref{sec:class_bench} and~\ref{sec:bench_exps}, we present extensive empirical evaluations across classical benchmark functions and a diverse set of application-driven problems on the Stiefel manifold. Finally, in Section~\ref{sec:case_study}, we illustrate the practical utility of BOOOM through a real-data application to metabolomics in colorectal cancer, followed by discussion, concluding remarks, and directions for future research in Section~\ref{sec:discussion}.

\section{Preliminaries and Algorithm Description}\label{sec:prelim}

To facilitate a coherent development of the proposed methodology, we first discuss several key preliminary concepts that constitute its foundation. 

\subsection{Global optimization and the exploration--exploitation trade-off}

Global optimization methods typically address optimization of a non-convex objective function with multiple local minima by combining a broad search with local refinements. In contrast, purely local methods usually follow descent directions and may become trapped in suboptimal regions. A central framework in this context is the \emph{exploration vs.\ exploitation} trade-off, widely studied in reinforcement learning and search theory \citep{BergerTal2014, Zhang2023}. In numerical optimization, this trade-off can be conceptually expressed as a randomized iterative procedure,
\begin{equation*}
    \text{Next step} = \begin{cases}
        \text{Exploit current solution by localized search, } & \text{ with probability } \gamma,\\
        \text{Explore a new solution globally, }  & \text{ with probability } (1-\gamma),
    \end{cases}
\end{equation*}
\noindent where $\gamma \in (0, 1)$ controls the emphasis on local descent versus global search. As an example, gradient-based methods such as gradient descent~\citep{ruder2016overview} pick $\gamma = 1$ to ensure faster convergence, matrix manifold optimizations including Riemannian gradient and trust-region algorithms \citep{AbsilMahonySepulchre2008, boumal2014manopt, Boumal2023} can be used to obtain local solutions. In contrast, global optimization methods, including metaheuristics such as Genetic Algorithms (GA; \citealp{Bethke1980}) and Simulated Annealing (SA; \citealp{kirkpatrick1983optimization}), in principle, dynamically balance exploration and exploitation, favoring exploration ($\gamma$ closer to $0$) to escape local optima and shifting toward local refinement ($\gamma$ closer to $1$) within promising regions. 
Despite this flexibility, metaheuristic methods often suffer from poor scalability in high dimensions due to the exponential growth of the search space~\citep{Geris2012} combined with a deterministic exploration strategy. A randomized strategy that adaptively balances the choice of $\gamma$ based on the quality of the current solution would be able to borrow strengths from both of these paradigms.

\subsection{Fermi's principle, Pattern Search, and RMPS}\label{sec:fermi-principle}

One of the earliest derivative-free strategies for black-box optimization is \emph{Fermi’s principle}~\citep{Fermi1952}, which performs coordinate-wise exploration of the search space. In an $N$-dimensional unconstrained setting, this principle evaluates the objective at $2N$ axis-aligned candidate points of the form
\begin{equation*}
    x \pm s e_m, \ m = 1, 2, \dots, N,
\end{equation*}
\noindent where $s>0$ is a step size and $\{e_m\}_{m=1}^N$ are the canonical basis vectors. At each iteration, the best candidate replaces the current iterate. The step size $s$ controls the scale of exploration: large values promote global movement, while small values enable local refinement. Figure~\ref{fig:Fermi's_Principle} illustrates this coordinate-wise exploration for a fixed step size. When combined with a decreasing sequence of step sizes, this strategy yields convergence to local optima without requiring derivative information.

\begin{figure}[htbp]
	\centering
	\includegraphics[width=0.95\textwidth]{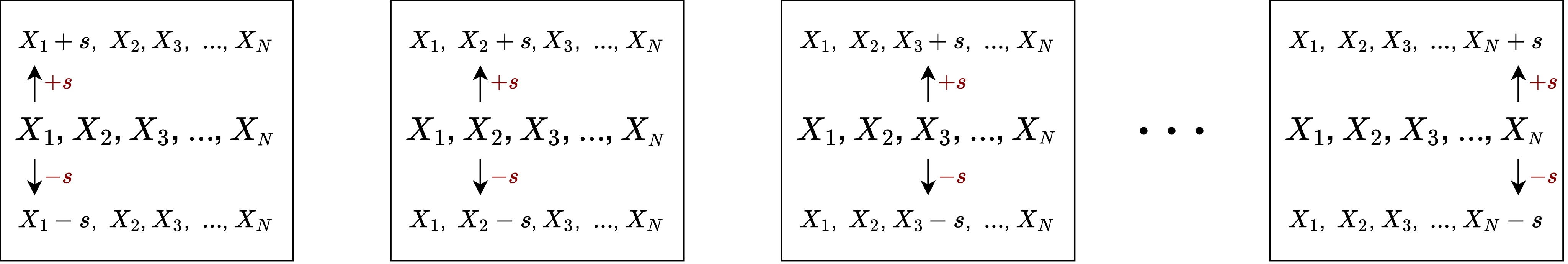}
	\caption{Fermi's principle: possible $2N$ coordinate-wise movements from a current point in $\mathbb{R}^N$ with fixed step size $s$.}
	\label{fig:Fermi's_Principle}
\end{figure}

Fermi’s principle forms the basis of Pattern Search (PS) methods and related direct search algorithms \citep{torczon1997convergence, Kolda2003}. These methods iteratively explore structured sets of candidate points and adapt the step size based on improvement, providing a robust framework for optimizing non-differentiable or discontinuous functions. While PS methods incorporate a degree of exploration through coordinate polling, they remain fundamentally local: once the step size becomes small, the search is confined to a neighborhood of the current solution, and there is no inherent mechanism to escape local minima.

To address this limitation, \cite{Das2023RMPSH} proposed the \emph{Recursive Modified Pattern Search (RMPS)} principle, which extends classical PS in two key ways. First, RMPS employs an adaptive step-size reduction scheme that ensures progressive refinement while maintaining stability. Second, it introduces a restart mechanism, whereby the algorithm is periodically reinitialized (typically from the best solution found so far) with a large step size, enabling renewed exploration of the search space. This combination allows RMPS to balance local convergence with global exploration in a principled manner. RMPS has demonstrated strong empirical performance across a variety of non-convex optimization problems and constrained domains, including spheres, simplices, and multi-simplex structures \citep{kim2026smart, Das2022, Das2021}, often outperforming classical metaheuristic black-box methods such as GA and SA. RMPS has been shown to outperform context-specific conventional algorithms, such as the Expectation–Maximization (EM) algorithm, in mixture Markov clustering problems \citep{Das2023MsiCOR}. It has also been successfully applied in statistical and applied settings such as mixture modeling, finance, and biomedical applications \citep{tan2020, Das2017b, Das2023}. These properties make RMPS a natural foundation for developing scalable, derivative-free optimization methods in more structured spaces, which we leverage in constructing the optimization method over $\mathrm{St}(p,d)$.

\subsection{Exploration on the Stiefel manifold via Givens rotations}\label{sec:exploration-givens}

While RMPS is naturally formulated in Euclidean spaces, directly applying coordinate-wise perturbations to \eqref{eq:stiefel_problem} is nontrivial due to the orthogonality constraint. Naive updates of the form $Q \mapsto Q \pm s E$ (where $E$ denotes a canonical basis matrix) generally violate the constraint $Q^\top Q = I_d$, necessitating projections or retractions that can be computationally expensive and may distort the underlying search geometry. To address this challenge, we construct a parametrization of the Stiefel manifold that enables structured, coordinate-wise exploration while preserving feasibility exactly. The key idea is to reinterpret Euclidean coordinate-wise search as plane-wise rotations on the manifold. Specifically, given a current iterate $Q \in \mathrm{St}(p,d)$, we generate candidate solutions by applying Givens rotations $R_{i,j}(\theta) \in \mathbb{R}^{p \times p}$, which rotate the rows of $Q$ within the two-dimensional subspace spanned by indices $(i,j)$ with $1 \le i < j \le p$. A rotation angle $\theta > 0$ corresponds to a counterclockwise rotation in the $(i,j)$-plane, while $\theta < 0$ induces a clockwise rotation. These transformations preserve orthonormality by construction, ensuring that all candidates remain on $\mathrm{St}(p,d)$ without the need for projection. Formally, each Givens rotation has the form
\begin{equation}
    R_{i,j}(\theta)=
    \begin{bmatrix}
    1 & \cdots & 0 & \cdots & 0 & \cdots & 0\\
    \vdots & \ddots & \vdots &  & \vdots &  & \vdots\\
    0 & \cdots & \cos\theta & \cdots & -\sin\theta & \cdots & 0\\
    \vdots &  & \vdots & \ddots & \vdots &  & \vdots\\
    0 & \cdots & \sin\theta & \cdots & \cos\theta & \cdots & 0\\
    \vdots &  & \vdots &  & \vdots & \ddots & \vdots\\
    0 & \cdots & 0 & \cdots & 0 & \cdots & 1
    \end{bmatrix}_{p\times p}.
    \label{eqn:givens-rotation}
\end{equation}
\noindent Applying $R_{i,j}(\theta)$ to $Q$ updates only the $i$-th and $j$-th rows while leaving all other rows unchanged, thereby preserving the orthonormal column structure. Since there are $\binom{p}{2}$ distinct index pairs, this construction yields $2\binom{p}{2}$ candidate moves at each iteration, corresponding to rotations by $\pm \theta$ along each coordinate direction in the induced parameter space.

Figure~\ref{fig:Fermi's_Principle_Stiefel_manifold} illustrates the proposed extension of Fermi’s principle to the Stiefel manifold. In contrast to Euclidean coordinate perturbations, which act independently along axes, performing these structured rotations via the Givens rotation operator as in~\eqref{eqn:givens-rotation} ensures that the iterates remain entirely within the feasible set. This establishes a direct analogue of coordinate-wise exploration in a curved, constrained space.

\begin{figure}[htbp]
	\centering
	\includegraphics[width=0.9\textwidth]{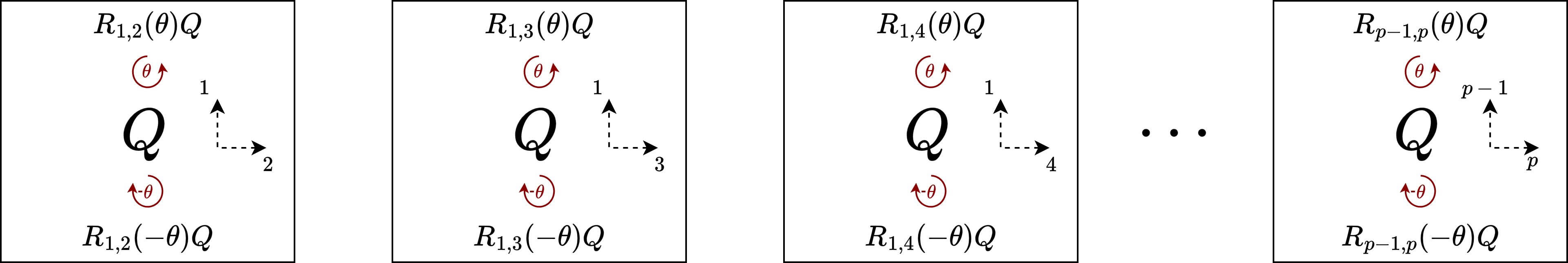}
	\caption{Fermi's principle on $\mathrm{St}(p,d)$: possible $2\binom{p}{2}$ plane-wise rotational moves from a current matrix $Q_{p\times d}$ using a fixed angle $\theta$. Positive $\theta$ corresponds to counterclockwise rotation, while negative $\theta$ corresponds to clockwise rotation.}
	\label{fig:Fermi's_Principle_Stiefel_manifold}
\end{figure}

\subsection{Overview of BOOOM}\label{sec:algo-overview}

Building on the Givens rotation-based exploration introduced in the previous subsection, the proposed framework BOOOM (Black-box Optimization Over Orthonormal Manifolds) applies an RMPS-style search over the induced parameter space while maintaining feasibility on $\mathrm{St}(p,d)$ at every step. The algorithm proceeds in \emph{runs}, each consisting of multiple iterations with progressively refined exploration. Within each run, BOOOM begins with a relatively large rotational step size $s$, enabling broad exploration of the search space. At subsequent iterations, the step size is geometrically decreased to facilitate local refinement. Once a run terminates, the next run is initialized at the best solution obtained so far, with the step size reset to a large value. This restart mechanism promotes renewed exploration and helps the algorithm escape local minima. BOOOM terminates when successive runs yield negligible improvement in the objective value.

\noindent\textbf{BOOOM parameters:} The BOOOM algorithm is governed by several parameters. In addition to the standard optimization parameters, such as the maximum number of iterations (max\_iter), the maximum number of runs (max\_runs), within-run convergence threshold $\tau_1 > 0$ and between-runs progress threshold $\tau_2 > 0$, BOOOM also uses an initial step size $s_{\textit{initial}} > 0$, step-size decay rate $\rho > 1$, step-size threshold $\phi > 0$. These parameters allow the user to balance coarse exploration (large $s_{\textit{initial}}$, small $\rho$) with fine local refinement (small $\phi$, tighter $\tau_1$ and $\tau_2$).

\vspace{0.2cm}
\noindent\textbf{Exploratory movements:} At the start of the $R$-th run, BOOOM initializaes at $Q^{(0)} = \hat{Q}^{(R-1)}$, the best solution obtained from the previous run. It starts with a step size $s^{(0)} = s_{\textit{initial}}$; a typical value is chosen at $\pi$ to ensure exploration of a broad neighbourhood around the current solution, increasing the likelihood of escaping a local minima. For the first run $R = 1$, the algorithm starts from a randomly generated orthonormal matrix. At each iteration $h$ within a run $R$, BOOOM constructs candidate solutions by applying Givens rotations to the current iterate $Q^{(h-1)}$. For each pair $(i,j)$ with $1 \le i < j \le p$, two candidates are generated using rotations with angles $\pm s$, producing a total of $p(p-1)$ candidates:
\begin{equation*}
    Q^{(h)}(i,j,\pm s) = R_{i,j}(\pm s)\, Q^{(h-1)}.    
\end{equation*}
\noindent The objective function is evaluated at all candidates, and the best-performing one is selected as the next iterate if it improves upon the current solution. The step size $s^{(h)}$ is updated adaptively: it is reduced to $s^{(h)}/\rho$ if no sufficient improvement is observed, or retained otherwise.

\vspace{0.2cm}
\noindent\textbf{Convergence criterion:} BOOOM monitors the improvement both across iterations within a run, as well as between two solutions obtained from two successive runs. If the change in objective value satisfies $|f(Q^{(h-1)}) - f(Q^{(h)})| < \tau_1$, the step size is reduced. Iterations continue until the step size falls below the threshold $\phi$, at which point the run terminates and outputs $\widehat{Q}^{(R)}$ as the current solution. At the end of each run, BOOOM compares the solutions at two successive runs by an adequate convergence criteria, e.g., the algorithm may terminate if 
\begin{equation*}
    |f(\widehat{Q}^{(R)}) - f(\widehat{Q}^{(R-1)})| < \tau_2.
\end{equation*}

\vspace{0.2cm}
\noindent \textbf{Parallelization:} For a fixed step size, the $2\binom{p}{2} = p(p-1)$ candidate evaluations at each iteration are independent, since each candidate depends only on the current iterate $Q^{(h-1)}$. As a result, these evaluations can be carried out in parallel across multiple CPU cores. Additionally, certain ``black-box'' objective functions allow scalable vectorized operations or processing of batch inputs, which can also be leveraged. This significantly reduces computational time of the proposed algorithm even when $p$ is large, allowing BOOOM to efficiently explore a large candidate set at each iteration.

Let $\zeta:\{1,\ldots,\binom{p}{2}\} \to \{(i,j)\in \mathbb{N}^2 : i<j\}$ be the enumeration mapping that associates each coordinate with a unique pair $(i, j)$ for $1 \leq i < j \leq p$, i.e., $\zeta(1) = (1,2), \zeta(2) = (1,3), \ldots, \zeta(\binom{p}{2}) = (p-1,p)$. Equipped with this notation and based on the above discussions, the complete pseudocode of the BOOOM algorithm is presented in Algorithm~\ref{euclid}. A schematic overview of the algorithmic flow, including the parallel evaluation of candidate rotations and the within-run and across-run update logic, is illustrated in Figure~\ref{BOOOM_concept}.

\begin{algorithm}[htbp]
    \caption{BOOOM}\label{euclid}
    \KwInput{Initial guess for orthogonal matrix ${Q}_{p \times d}$}
    \KwOutput{$\widehat{Q}$; BOOOM optimized solution $p \times d$ orthonormal matrix}
    \textbf{Initialization:} $R \gets 1$\tcc*[r]{run index}
    $N \gets \binom{p}{2}$\;
    \If{$R = 1$}{
        ${Q}^{(0)} \gets \text{Initial guess}$
        \tcc*[r]{${Q}^{(h)}$ is the value of ${Q}$ at $h$-th iteration}
    }
    \Else{
        ${Q}^{(0)} \gets \widehat{{Q}}^{(R-1)}$
        \tcc*[r]{$\widehat{{Q}}^{(R)}$ is the value of ${Q}$ at $r$-th run}
    }
    $h \gets 1$\;
    $s^{(0)} \gets s_{initial}$\tcc*[r]{step size}
    \While{$h \leq max\_iter$ and $s^{(h)} > \phi$}{
        $F_1 \gets f({Q}^{(h-1)})$\;
        $s \gets s^{(h-1)}$\;
        \For{$k = 1$ \textbf{to} $2N$}{
            $s_k \gets (-1)^k s$\;
            $(i,j) \gets \zeta(\lceil \frac{k}{2}\rceil)$
            \tcc*[r]{$\lceil \cdot\rceil$ indicates ceiling function}
            $Q^{(h)}(i,j,s_k) \gets R_{i,j}(s_k){Q}^{(h-1)}$ \tcc*[r]{Apply Given's rotation}
        }
        $(i,j,s_k)_{best} \gets \arg\min \limits_{(i,j,s_k)} f(Q^{(h)}(i,j,s_k))$ {over} $k = 1,\ldots,2N$\;
        $\boldsymbol{Q}_{best}^{(h)} \gets Q^{(h)}(i,j,s_k)_{best}$\;
        ${Q}_{temp} \gets Q_{{best}}^{(h)}$\;
        $\boldsymbol{Q}^{(h)} \gets {Q}^{(h-1)}$\;
        $F_2 \gets f({Q}_{best}^{(h)})$\;
        \If{$F_2 < F_1$}{
            ${Q}^{(h)} \gets {Q}_{temp}$\;
        }
        \If {$h > 1$ \textbf{and} $\vert F_1-F_2\vert < \tau_1$ \textbf{and} $s>\phi$}{
            $s \gets s / \rho$\;
        }
        $s^{(h)} \gets s$\;
        $h \gets h + 1$\;
    }
    $\widehat{{Q}}^{(R)} \gets {Q}^{(h)}$\;
    \If{$\vert f(\widehat{{Q}}^{(R)}) - f(\widehat{{Q}}^{(R-1)})\vert < \tau_2$}{
        \Return{$\widehat{{Q}} = \widehat{{Q}}^{(R)}$}
    }
    \Else{
        $R \gets R + 1$\;
        Go to step 3\;
    }
\end{algorithm}

\begin{figure}[htbp]
	\centering
	\includegraphics[width=\textwidth]{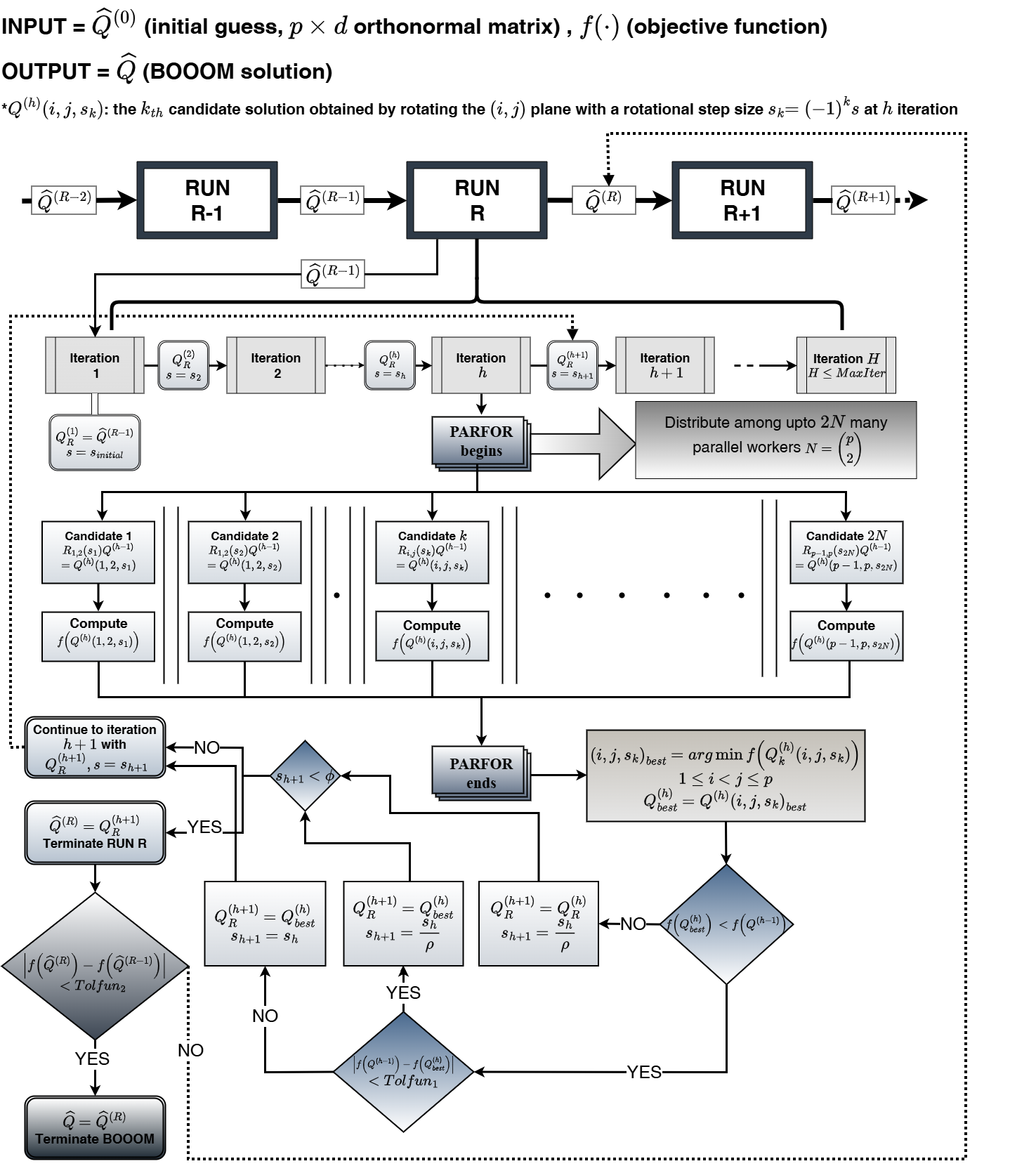}
	\caption{BOOOM flowchart}
	\label{BOOOM_concept}
\end{figure}

\section{Theoretical Properties}\label{sec:theory}

In this section, we establish the theoretical foundations of BOOOM by linking its algorithmic design to the geometry and optimization structure of the Stiefel manifold. Our analysis proceeds in two stages. First in Section~\ref{subsec:BOOOM_param}, we develop a theoretical understanding of the Givens rotation-based parametrization in the context of constrained optimization over $\mathrm{St}(p,d)$. Second in Section~\ref{subsec:theory-booom}, we analyze the convergence behavior of BOOOM under this parametrization. We show that the lack of descent in the angle space implies stationarity, which transfers to Riemannian stationarity on the manifold via a submersion argument. This connection enables us to establish global convergence in probability under mild conditions with full-support restarts, along with local optimality guarantees and a per-run complexity bound under convexity of the objective function. Together, these results establish a coherent link between the parametrization, the algorithmic updates, and the resulting convergence behavior, and provide a theoretical justification for BOOOM as a method for black-box optimization over orthogonality-constrained parameter spaces.

\subsection{Notations}\label{sec:notations}

The theoretical analysis of BOOOM relies on several notational constructs; we take some time to introduce them to aid the reader. As already indicated before in~\eqref{eqn:steifel-manifold}, let $\text{St}(p,d)$ be the Stiefel manifold, i.e., the set of $p \times d$ matrices with orthonormal columns. Let $\text{SO}(p)$ be the special orthogonal group of order $p$, consisting on only those matrices $Q$ from $\text{St}(p,p)$ such that their determinant is equal to $1$. Let $R_{i,j}(\theta)$ denote the Givens rotation for the $(i,j)$-th plane as in~\eqref{eqn:givens-rotation}. For a Riemannian manifold $\mathcal{M}$, we use $T_x\mathcal{M}$ to indicate the tangent space of $\mathcal{M}$ at $x \in \mathcal{M}$, and $\langle \cdot, \cdot \rangle$ to indicate the canonical Euclidean inner product on the tangent space. For the BOOOM algorithm, $Q^{(R,h)}$ and $\hat{Q}^{(R)}$ respectively denote the current solution at the $h$-th iteration of the $R$-th run and at the end of $R$-th run. When there is no ambiguity in the choice of the run $R$, we drop the superscript $R$ and denote $Q^{(h)}$ as the solution at $h$-th iteration. Similarly, $s^{(R, h)}$ and $s^{(h)}$ denote the step size at iteration $h$ for run $R$. For functions $f : A \mapsto B$, we use the notation $\mathcal{C}^k(A)$ to indicate the class of functions that are $k$-times differentiable in $A$, and we drop the set $A$ from parentheses when it is understood from the context. The readers are also referred to~\cite{Edelman1998} for different concepts related to Riemannian geometry and their implications.

\subsection{Parametrization of BOOOM}\label{subsec:BOOOM_param}

A central component of BOOOM is the ability to convert optimization over the constrained manifold $\mathrm{St}(p,d)$ into an unconstrained search over a Euclidean parameter space. This is achieved by expressing any orthonormal matrix as a product of planar Givens rotations applied to a fixed base point. Such a representation is particularly appealing for black-box optimization, as it preserves feasibility exactly while enabling coordinate-wise exploration in a continuous parameter space. We begin by establishing that any point on the Stiefel manifold can be generated through a finite sequence of Givens rotations. This extends the classical Hurwitz theorem (Lemma~1 of \citealp{jiang2022givens}, following \citealp{Hurwitz1963}) for orthogonal matrices to the rectangular Stiefel setting and provides the algebraic foundation for our parametrization.

\begin{lemma}[Extended Givens Decomposition on the Stiefel Manifold]\label{thm:stiefel-givens}
    Let $Q_0 \in \mathrm{St}(p,d)$ be any fixed column-orthonormal matrix. Then, for any $Q \in \mathrm{St}(p,d)$, there exists an angle vector $\theta = (\theta_{i,j}) \in \mathbb{R}^{\binom{p}{2}}$ such that
    \[
    Q = \left[ \prod_{(i,j)} R_{i,j}(\theta_{i,j}) \right] Q_0,
    \]
    where each $R_{i,j}(\theta_{i,j}) \in \mathbb{R}^{p \times p}$ is a Givens rotation acting on the $(i,j)$-plane, and the product is taken in any fixed order over $1 \le i < j \le p$. In other words, any $Q \in \mathrm{St}(p,d)$ can be obtained by applying a product of Givens rotations to any fixed base $Q_0 \in \mathrm{St}(p,d)$.
\end{lemma}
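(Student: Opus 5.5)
The plan is to reduce the rectangular statement to the square case and then invoke the classical Hurwitz factorization (Lemma~1 of \citealp{jiang2022givens}). That result states that every $U \in \mathrm{SO}(p)$ can be written as a product of $\binom{p}{2}$ Givens rotations $R_{i,j}(\theta_{i,j})$, one for each plane $1\le i<j\le p$, taken in a fixed order. Given this, it suffices to show that for any $Q, Q_0 \in \mathrm{St}(p,d)$ there is a $U \in \mathrm{SO}(p)$ with $U Q_0 = Q$. Applying the Hurwitz factorization to $U$ then yields the required angle vector $\theta$.

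\textbf{Step 1: completion to full orthogonal matrices.} Complete $Q_0$ to an orthogonal matrix $[\,Q_0 \;\, Q_0^\perp\,] \in \mathbb{R}^{p\times p}$, for instance by Gram--Schmidt on $[\,Q_0 \;\, I_p\,]$. Do the same for $Q$, obtaining $[\,Q \;\, Q^\perp\,]$. Define
\[
U = [\,Q \;\, Q^\perp\,]\,[\,Q_0 \;\, Q_0^\perp\,]^\top .
\]
This $U$ is orthogonal, and $U Q_0 = Q$ because $[\,Q_0 \;\, Q_0^\perp\,]^\top Q_0 = \begin{bmatrix} I_d \\ 0 \end{bmatrix}$.

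\textbf{Step 2: fixing the determinant.} We have $\det U = \pm 1$. If $\det U = -1$ and $d<p$, replace the last column of $Q^\perp$ by its negative. This keeps $[\,Q \;\, Q^\perp\,]$ orthogonal, leaves $UQ_0 = Q$ unchanged, and flips the sign of $\det U$, so that $U \in \mathrm{SO}(p)$.

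When $d = p$ there is no complement, and $\det U = \det Q \cdot \det Q_0$ is forced. Since every Givens rotation has determinant $1$, the statement can only hold when $\det Q = \det Q_0$, that is, within the connected component of $Q_0$. I would therefore state the lemma for $d<p$, and for $d=p$ restrict it to $\{Q : \det Q = \det Q_0\}$.

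\textbf{Step 3: Hurwitz factorization.} Apply Lemma~1 of \citet{jiang2022givens} to $U$ to write $U = \prod_{(i,j)} R_{i,j}(\theta_{i,j})$. Then $Q = UQ_0$ has exactly the asserted form.

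The main obstacle is the phrase ``in any fixed order.'' Hurwitz's theorem is proved by Givens elimination for one specific ordering: zeroing subdiagonal entries column by column. That ordering can be matched with the enumeration $\zeta$ used by the algorithm. For an arbitrary ordering, surjectivity of $\theta \mapsto \prod R_{i,j}(\theta_{i,j})$ onto $\mathrm{SO}(p)$ does not follow directly from that argument.

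What I would try first is an elimination argument adapted to the given order. Read the product from the left and, at each stage, choose $\theta_{i,j}$ to annihilate one entry of the partially reduced matrix, checking that no earlier zero is destroyed. If that fails for some orders, I would fall back to proving the lemma for the canonical order $\zeta$ only. This suffices for the algorithm, since BOOOM only ever uses that enumeration.
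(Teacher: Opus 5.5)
You follow the same route as the paper: complete $Q_0$ and $Q$ to square orthogonal matrices, take $U=\widetilde Q\,\widetilde Q_0^\top$ so that $UQ_0=Q$, then factor $U$ by the Hurwitz--Givens theorem. Both caveats you raise are genuine, and the paper's proof addresses neither. On the determinant, the paper simply asserts that $Q$ and $Q_0$ always admit completions in $\mathrm{SO}(p)$. For $d<p$ this is true by exactly your sign flip of a complementary column; the paper uses the same device in its proof of Proposition~\ref{thm:givens_param}. For $d=p$ both that assertion and the lemma are false. Every Givens product has determinant $1$, so $\det\bigl(\prod R_{i,j}(\theta_{i,j})\,Q_0\bigr)=\det Q_0$; for instance, $\mathrm{diag}(1,-1)$ cannot be reached from $Q_0=I_2$. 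Your restricted statement is the correct one.

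On the ordering, the paper only cites Hurwitz, whose elimination proof fixes one particular order, so ``in any fixed order'' is unsupported there too. This is the one piece of the statement that neither you nor the paper actually proves.

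Your fallback is rigorous for $\zeta$. In the lexicographic product the planes $(1,2),\dots,(1,p)$ come first, and $R_{1,2}(\theta_{1,2})\cdots R_{1,p}(\theta_{1,p})\,e_1$ sweeps out all of $S^{p-1}$ in hyperspherical coordinates, so you can match $Ue_1$. The leftover factor then fixes $e_1$, hence lies in $1\oplus\mathrm{SO}(p-1)$. The remaining rotations are again in lexicographic order on $\{2,\dots,p\}$, so induction finishes. The same peeling works for any order in which, recursively, some index has all $p-1$ of its planes contiguous at the left or right end of the product.

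Your general plan of annihilating one entry per factor is not yet an argument outside that class, e.g.\ for $R_{1,2}R_{3,4}R_{1,3}R_{2,4}R_{1,4}R_{2,3}$ when $p=4$. Such orders need not fail. This one is still onto, as one sees from $\mathrm{SO}(4)\cong(S^3\times S^3)/\{\pm1\}$, where it splits into two independent Tait--Bryan parametrizations of $S^3$. Proving such orders onto, however, requires a different idea than entrywise elimination.
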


Lemma~\ref{thm:stiefel-givens} shows that the Stiefel manifold admits a constructive representation via planar rotations, which establishes a surjective map to transform any $Q_0 \in \mathrm{St}(p,d)$ to any other $Q \in \mathrm{St}(p,d)$. However, for optimization purposes, it is not sufficient. We require the mapping to be sufficiently smooth to allow us to reformulate constrained optimization problems in an unconstrained domain.

\begin{proposition}[Givens Parameterization of the Stiefel Manifold]\label{thm:givens_param}
    Let $p \geq d$ and fix any $Q_0\in\mathrm{St}(p,d)$. Then, there exists a smooth (i.e., $\mathcal{C}^\infty$), surjective mapping
    \begin{equation}
        \Phi:\Theta\to \mathrm{St}(p,d),\qquad 
        \Phi(\theta)=\Big[\prod_{1\le i<j\le p} R_{i,j}(\theta_{i,j})\Big]\,Q_0,
        \label{eqn:givens-phi}
    \end{equation}
    \noindent where $\Theta=\mathbb{R}^{\binom{p}{2}}$ (or $(-\pi,\pi]^{\binom{p}{2}}$) and $R_{i,j}(\cdot)$ is the Givens rotation as in~\eqref{eqn:givens-rotation}, and the product is taken in any fixed order over all pairs $1\le i<j\le p$. Consequently, for any continuous $f: \mathrm{St}(p,d) \to \mathbb{R}$,
    \begin{equation*}
        \min_{Q\in\mathrm{St}(p,d)} f(Q) \;=\; \min_{\theta\in\Theta} f(\Phi(\theta)).        
    \end{equation*}
\end{proposition}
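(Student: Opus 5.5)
The proof has three parts: smoothness of $\Phi$, surjectivity, and equality of the two optimization problems.

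\textbf{Smoothness.} Each entry of a Givens rotation $R_{i,j}(\theta_{i,j})$ in \eqref{eqn:givens-rotation} is either a constant or one of $\pm\cos\theta_{i,j}$, $\pm\sin\theta_{i,j}$. These are real-analytic functions of the single coordinate $\theta_{i,j}$. Matrix multiplication is polynomial in the entries. The finite product $\prod_{i<j} R_{i,j}(\theta_{i,j})$, taken in the fixed order, is therefore an entrywise $\mathcal{C}^\infty$ function of $\theta\in\mathbb{R}^{\binom{p}{2}}$. Right-multiplying by the constant matrix $Q_0$ keeps this smoothness, so $\Phi$ is $\mathcal{C}^\infty$ into $\mathbb{R}^{p\times d}$.

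\textbf{$\Phi$ lands in $\mathrm{St}(p,d)$.} Each factor is orthogonal, so the product $G(\theta)$ is orthogonal. Hence
\[
\Phi(\theta)^\top\Phi(\theta)=Q_0^\top G(\theta)^\top G(\theta)Q_0=I_d .
\]
Since $\mathrm{St}(p,d)$ is an embedded submanifold of $\mathbb{R}^{p\times d}$, a smooth map into the ambient space with image in the submanifold is smooth as a map into it.

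\textbf{Surjectivity.} This follows directly from Lemma~\ref{thm:stiefel-givens}: for every $Q$ there is an angle vector $\theta$, with the product taken in any fixed order, such that $\Phi(\theta)=Q$. For the version $\Theta=(-\pi,\pi]^{\binom{p}{2}}$, note that each $R_{i,j}$ is $2\pi$-periodic in its angle. Reducing every $\theta_{i,j}$ modulo $2\pi$ into $(-\pi,\pi]$ leaves $\Phi(\theta)$ unchanged, so surjectivity holds on the restricted domain as well.

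\textbf{Equality of minima.} Let $f$ be continuous. Because $\mathrm{St}(p,d)$ is closed and bounded in $\mathbb{R}^{p\times d}$, it is compact, so $f$ attains its minimum at some $Q^\star$. On the other side, $f\circ\Phi$ is continuous and $2\pi$-periodic in each coordinate, so it attains its minimum over $\mathbb{R}^{\binom{p}{2}}$. It suffices to minimize over the compact cube $[-\pi,\pi]^{\binom{p}{2}}$, whose image under $\Phi$ is the same set. The two inequalities then go as follows:
\begin{itemize}
\item Since $\Phi(\theta)\in\mathrm{St}(p,d)$ for every $\theta$, we have $f(\Phi(\theta))\ge f(Q^\star)$, so the angle-space minimum is at least the manifold minimum.
\item By surjectivity, some $\theta^\star$ satisfies $\Phi(\theta^\star)=Q^\star$, so the angle-space minimum is at most $f(Q^\star)$.
\end{itemize}
Together these give equality.

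The only substantive ingredient is the surjectivity supplied by Lemma~\ref{thm:stiefel-givens}, which I take as given. The one point that needs care is attainment of the minimum over the non-compact set $\Theta=\mathbb{R}^{\binom{p}{2}}$ rather than merely an infimum; the periodicity reduction to the compact cube handles this.
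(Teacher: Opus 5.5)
Your argument is essentially the paper's. The paper writes $\Phi = A_{Q_0}\circ\Psi$ with $\Psi(\theta)=\prod_{i<j}R_{i,j}(\theta_{i,j})$, gets surjectivity of $\Psi$ onto $\mathrm{SO}(p)$ from the Hurwitz--Givens factorization, and gets surjectivity of the orbit map $U\mapsto UQ_0$ from orthogonal completions. It then proves the equality of minima with exactly your two inequalities. Citing Lemma~\ref{thm:stiefel-givens} instead of re-deriving this is fine, and your explicit treatment of the $(-\pi,\pi]$ domain by periodicity is a small improvement. The compactness step for attainment is not needed, though: the $\theta^\star$ with $\Phi(\theta^\star)=Q^\star$ already attains the minimum of $f\circ\Phi$.

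The genuine problem is hidden inside your one-line appeal to Lemma~\ref{thm:stiefel-givens}: surjectivity is false when $d=p$, a case the hypothesis $p\ge d$ allows. Every Givens rotation has determinant $1$, so for $d=p$ one has $\det\Phi(\theta)=\det Q_0$ for every $\theta$. But $\mathrm{St}(p,p)=O(p)$ contains matrices of both determinants. The equality of minima fails too: with $Q_0=I_p$ and the continuous function $f(Q)=\det Q$, the left-hand minimum is $-1$ and the right-hand minimum is $1$.

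The paper's proof at least shows where $d<p$ is used. It puts the completions $\widetilde Q_0,\widetilde Q$ into $\mathrm{SO}(p)$ by flipping the sign of a column outside the first $d$, and no such column exists when $d=p$. The proof of Lemma~\ref{thm:stiefel-givens} silently assumes the same step when it asserts completions in $\mathrm{SO}(p)$.

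So your proof, like the paper's, is valid only for $d<p$. For $d=p$ you must either restrict the conclusion to the component $\{Q\in O(p):\det Q=\det Q_0\}$, or augment the parametrization with an optional fixed reflection.
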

\noindent Proposition~\ref{thm:givens_param} is fundamental for BOOOM: it shows that optimizing a function over the Stiefel manifold is equivalent to optimizing its pullback over the angle space. From an algorithmic perspective, this equivalence allows BOOOM to operate entirely in $\mathbb{R}^{\binom{p}{2}}$ using derivative-free search strategies, while implicitly maintaining feasibility on the manifold at every iteration.

\begin{remark}[Non-uniqueness and domain]
One should be careful about the fact that the map $\Phi$ is not injective, even if the angles are restricted to a fundamental domain such as $(-\pi,\pi]^{\binom{p}{2}}$ or $[0,2\pi)^{\binom{p}{2}}$. This can be attributed to several reasons: (i) Factorization redundancy: A given $U \in \mathrm{SO}(p)$ generally admits many ordered products of Givens rotations (beyond $2\pi$-periodicity), e.g., $R_{12}(\alpha)R_{13}(\pi/2)R_{23}(\beta) = R_{12}(\alpha+\delta)R_{13}(\pi/2)R_{23}(\beta - \delta)$ for any $\delta \in (-\pi, \pi]$. (ii) Stabilizer redundancy: Rotations acting only on the orthogonal complement of the span of the columns of $Q_0$ leave $Q_0$ unchanged, introducing additional non-uniqueness. This is also consistent with the fact that $\mathrm{St}(p,d)$ can be identified as the homogeneous space $\mathrm{SO}(p)/\mathrm{SO}(p-d)$.
\end{remark}

Fortunately, this non-uniqueness of the parametrization does not affect the optimization procedure. In fact, by introducing redundancy in the representation of the search space, it becomes beneficial in practice by providing multiple pathways to reach high-quality solutions. The following corollary of Proposition~\ref{thm:givens_param} connects the RMPS updates with this parametrization.

\begin{corollary}[RMPS iterates lie in the image of $\Phi$]
\label{cor:rmps-in-image}
    Fix $Q^{(0)} \in\mathrm{St}(p,d)$ and consider RMPS updates defined by pre-multiplication with planar Givens rotations, 
    \begin{equation*}
        Q^{(t+1)} = R_{i_t j_t}(\theta_t) Q^{(t)}, \ t = 1, 2, \dots, 
    \end{equation*}
    \noindent where eahc $R_{i_tj_t}(\theta_t) \in \mathbb{R}^{p \times p}$ is a Givens rotation acting on the $(i_t,j_t)$-th plane, as defined in~\eqref{eqn:givens-rotation}. Then, for every finite $t \geq 1$, there exists an angle vector $\vartheta_t \in\Theta\subset\mathbb{R}^{\binom{p}{2}}$ such that
    \begin{equation}
        Q^{(t)}=\Phi(\vartheta_t)=\Big[\prod_{1\le i<j\le p} R_{i,j}(\vartheta_{i,j})\Big]\,Q^{(0)},
        \label{eqn:cor-rmps-image}
    \end{equation}
    \noindent where the product is taken in any fixed canonical order over the $\binom{p}{2}$ pairs.
\end{corollary}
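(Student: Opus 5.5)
The plan is to show that every iterate lies on $\mathrm{St}(p,d)$ and then apply Lemma~\ref{thm:stiefel-givens} with base point $Q_0 = Q^{(0)}$. The statement is purely existential and holds for each fixed $t$, so no control of the angle vectors across iterations is needed. The corollary therefore follows almost directly from the surjectivity already established.

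\textbf{Step 1: feasibility of the iterates.} I would show by induction on $t$ that $Q^{(t)} \in \mathrm{St}(p,d)$. The base case holds by assumption on $Q^{(0)}$. For the inductive step, each $R_{i_t j_t}(\theta_t)$ is orthogonal, since its only nontrivial block is a $2\times 2$ rotation $\begin{bmatrix}\cos\theta & -\sin\theta\\ \sin\theta & \cos\theta\end{bmatrix}$ embedded in the identity. Hence
\[
(Q^{(t+1)})^\top Q^{(t+1)} = (Q^{(t)})^\top R_{i_tj_t}(\theta_t)^\top R_{i_tj_t}(\theta_t) Q^{(t)} = (Q^{(t)})^\top Q^{(t)} = I_d .
\]
Alternatively, one can write $Q^{(t)} = G_t Q^{(0)}$, where $G_t = R_{i_{t-1}j_{t-1}}(\theta_{t-1})\cdots R_{i_0 j_0}(\theta_0)$ is a finite product of Givens rotations and hence lies in $\mathrm{SO}(p)$. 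This gives feasibility directly. The statement's indexing starts at $t=1$; I will treat the first update as also arising from a Givens rotation, or simply note that the argument is the same for any finite composition.

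\textbf{Step 2: apply the surjectivity lemma.} With $Q^{(t)} \in \mathrm{St}(p,d)$, I invoke Lemma~\ref{thm:stiefel-givens}, equivalently the surjectivity part of Proposition~\ref{thm:givens_param}, using the fixed base $Q_0 = Q^{(0)}$ and the chosen canonical order of the $\binom{p}{2}$ pairs. This gives $\vartheta_t \in \mathbb{R}^{\binom{p}{2}}$ with $Q^{(t)} = \Phi(\vartheta_t)$, which is exactly \eqref{eqn:cor-rmps-image}. Since every Givens rotation is $2\pi$-periodic in its angle, each coordinate of $\vartheta_t$ can be reduced modulo $2\pi$, so $\vartheta_t$ can also be taken in $(-\pi,\pi]^{\binom{p}{2}}$. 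This covers both choices of $\Theta$.

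\textbf{Where the difficulty lies.} The only potential obstacle is that one might try to build $\vartheta_t$ constructively, by merging the sequence of rotations $R_{i_tj_t}(\theta_t)$, taken in arbitrary order and possibly repeating planes, into a single ordered product with one angle per plane. That would require reordering noncommuting rotations, which is nontrivial. I would avoid this entirely by relying on Lemma~\ref{thm:stiefel-givens}, which already provides a representation for every point of the manifold. In a remark I would add that $\vartheta_t$ need not be unique, consistent with the non-injectivity remark, and need not depend continuously on $t$.
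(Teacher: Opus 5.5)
Your proof is correct relative to the paper's stated results and is essentially the paper's argument. Both reduce the corollary to the previously established surjectivity of the Givens parametrization, and neither tries to merge the individual rotations plane by plane.

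The one difference is where that surjectivity is applied. The paper writes $Q^{(t)} = U_t Q^{(0)}$ with $U_t \in \mathrm{SO}(p)$ and factors the accumulated rotation $U_t$ itself as an ordered Givens product. Your main line instead uses only $Q^{(t)} \in \mathrm{St}(p,d)$ and then invokes surjectivity of $\Phi$ onto the whole manifold via Lemma~\ref{thm:stiefel-givens}.

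The paper's placement is the more robust one. When $d = p$, every $\Phi(\theta)$ has determinant $\det Q^{(0)}$, since Givens rotations have determinant $1$. So $\Phi$ is not onto $\mathrm{St}(p,p) = \mathrm{O}(p)$, and Lemma~\ref{thm:stiefel-givens} as stated fails for any $Q$ with $\det Q = -\det Q^{(0)}$. Your conclusion still holds, because your iterates never leave the $\mathrm{SO}(p)$-orbit of $Q^{(0)}$. To make that explicit, promote your ``alternative'' representation $Q^{(t)} = G_t Q^{(0)}$, $G_t \in \mathrm{SO}(p)$, from a feasibility check to the main step, and factor $G_t$ directly. That needs only the Hurwitz decomposition of $\mathrm{SO}(p)$, not surjectivity onto all of $\mathrm{St}(p,d)$.
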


Note that, Corollary~\ref{cor:rmps-in-image} is trivially satisfied when $t \leq \binom{p}{2}$. However, the result ensures that the representation in~\eqref{eqn:cor-rmps-image} holds even when $T>\binom{p}{2}$ and planes $(i,j)$ are revisited multiple times. The key consequence of Corollary~\ref{cor:rmps-in-image} is that all iterates generated by RMPS (i.e., the key optimization strategy underlying BOOOM) mechanism, constructed through successive Givens rotations, remain within the image of the mapping $\Phi$. In other words, the BOOOM algorithm performs a structured search in the angle space, maintains the exact feasiblity at every iteration, while ensuring a coordinate-wise exploration. 

Let $N = \binom{p}{2}$. Equipped with an initial solution $Q_0 \in \mathrm{St}(p,d)$, the BOOOM parametrization of the Stiefel manifold is given by $\Phi: \mathbb{R}^N \to \mathrm{St}(p, d)$ as illustrated in~\eqref{eqn:givens-phi} of Proposition~\ref{thm:givens_param}. Now, consider a coordinate-wise wrapping map onto a fundamental box, namely,
\begin{equation*}
    \mathscr{M}:\mathbb{R}^{N}\to(-\pi,\pi]^N,
\end{equation*}
\noindent where each coordinate is reduced module $2\pi$. Under this, one can consider the wrapped parametrization 
\begin{equation}
    \widetilde{\Phi}=\Phi\circ \mathscr{M},\qquad 
    g(\varphi)=f(\widetilde{\Phi}(\varphi)).
    \label{eqn:wrapped-parametrization}
\end{equation}
\noindent Because each planar rotation is $2\pi$-periodic in its angle, optimizing $g=f\circ\widetilde{\Phi}$ over $\mathbb{R}^N$ is equivalent to optimizing over any fundamental box (or, equivalently, over the torus $\mathbb{T}^N=\mathbb{R}^N/(2\pi\mathbb{Z})^N$). In particular, the wrapped parametrization is \emph{exact} in the optimization sense:
\[
\min_{Q\in\mathrm{St}(p,d)} f(Q)
=
\min_{\theta\in\mathbb{R}^{N}} f(\Phi(\theta))
=
\min_{\varphi\in\mathbb{R}^{N}} g(\varphi).
\]
\noindent Additionally, the smoothness properties of $f$ and $g$ remain transferrable under this wrapped parametrization. The smoothness and differential properties of the wrapped parametrization, including the submersion property required to transfer stationarity from angle space to the manifold, are summarized in Appendix~\ref{subsec:auxiliary}.

From theoretical standpoint, this enables us to investigate the behaviour of the BOOOM algorithm by constraining the analysis in two ways: (i) Angles may be taken modulo $2\pi$, and hence $\Theta$ can be taken as the compact domain $\Theta_0 = [-\pi, \pi]^{N}$. (ii) If two rotations share no index, they commute (e.g., in $p=4$, $R_{12}(\alpha)R_{34}(\beta)=R_{34}(\beta)R_{12}(\alpha)$). As a result, the existence of $\vartheta$ in the fundamental box does not depend on the update order or the number of repeats of the rotation operation on the same plane.

\subsection{Convergence analysis of BOOOM}\label{subsec:theory-booom}

Building on the parametrization developed above, we now analyze the convergence properties of BOOOM under the wrapped angle representation. First, we show that repeated unsuccessful RMPS polls imply stationarity of the objective function, in turn, demonstrating a local convergence of BOOOM. Next, we show that the random restart mechanism in BOOOM can ensure that the iterations get arbitrarily close to the global optimum, and in combination of local convergence, this produces a global convergence in probability. 

Letting $N = \binom{p}{2}$, consider $\Pi: \{ 1, 2, \dots, N\} \to \{ (i, j): 1 \leq i < j \leq p\}$ be the indexing map used in $\Phi$, so that coordinate $m$ controls rotation in plane $\Pi(m)=(i,j)$. At an iterate $\nu \in \Theta_0$ and step size $s>0$, RMPS polls the $2N$ candidates $\nu \pm s e_m$ for $m=1,\ldots,N$, evaluating $g(\mathscr{M}(\nu\pm s e_m))$. In matrix terms, writing $Q=\widetilde{\Phi}(\nu)$, these correspond to updates of the form $Q\mapsto R_{ij}(\pm s)Q$ for $\Pi(m)=(i,j)$. An \emph{unsuccessful poll at scale $s$} means that none of these candidates decreases $g$. Equipped with these notions, we present our first result regarding the local convergence behavior of BOOOM algorithm.

\begin{theorem}[No descent implies stationarity]
\label{thm:booom-stationarity-angles}
    Let $g: \mathbb{R}^N \to\mathbb{R}$ be differentiable on $\mathbb{R}^N$. Fix $\nu \in \mathbb{R}^N$ and let $\delta_k = s / \rho^k$ with $s > 0, \rho > 1$. If, for every $k\in\mathbb{N}$ and each $i\in\{1,\ldots,N \}$, $g(\nu)\le g(\nu\pm \delta_k e_i)$ then $\nabla g(\nu)=\mathbf{0}$. Moreover, consider the wrapped parameterization as in~\eqref{eqn:wrapped-parametrization} with $g=f\circ\widetilde{\Phi}$. Assume $\nu$ is not on a wrapping boundary so that $D\mathscr{M}(\nu)=I$, and that $D\Phi(\mathscr{M}(\nu)):\mathbb{R}^N\to T_Q\mathrm{St}(p,d)$ has full rank (i.e., is surjective). Then, $\nabla g(\nu)=\bm{0}$ also implies vanishing Riemannian gradient of $f$ on $\mathrm{St}(p, d)$ at $Q = \widetilde{\Phi}(\nu)$, i.e., $\operatorname{grad} f(Q)=\bm{0}$. 
\end{theorem}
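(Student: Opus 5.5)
The proof splits into two parts: a one-dimensional limit argument giving $\nabla g(\nu)=\mathbf{0}$, followed by a chain-rule argument that transfers this to the manifold.

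\textbf{Step 1: vanishing of the angle-space gradient.} Fix a coordinate $i$. Differentiability of $g$ at $\nu$ gives
\[
g(\nu\pm\delta_k e_i)=g(\nu)\pm\delta_k\,\partial_i g(\nu)+o(\delta_k), \qquad \delta_k\to 0 .
\]
From the hypothesis $g(\nu+\delta_k e_i)-g(\nu)\ge 0$, we divide by $\delta_k>0$ and let $k\to\infty$. Since $\delta_k=s/\rho^k\to 0$ because $\rho>1$, this gives $\partial_i g(\nu)\ge 0$. The same argument with $g(\nu-\delta_k e_i)-g(\nu)\ge 0$ gives $-\partial_i g(\nu)\ge 0$. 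Hence $\partial_i g(\nu)=0$ for every $i$, so $\nabla g(\nu)=\mathbf{0}$.

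Only the existence of the partial derivatives along the sequence $\delta_k$ is used here. Full differentiability is not needed for this step.

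\textbf{Step 2: transfer to the manifold.} Set $Q=\widetilde{\Phi}(\nu)$. We take $f$ to be differentiable on $\mathrm{St}(p,d)$, for instance the restriction of a $\mathcal{C}^1$ function on a neighbourhood in $\mathbb{R}^{p\times d}$, so that its Riemannian gradient exists. This is implicit in the statement. Since $\nu$ is not on a wrapping boundary, $\mathscr{M}$ is locally a translation by a multiple of $2\pi$. Therefore $D\mathscr{M}(\nu)=I$ and $\widetilde{\Phi}$ is locally equal to $\Phi$ composed with that translation. Proposition~\ref{thm:givens_param} says $\Phi$ is smooth, so the chain rule applies. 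For every $v\in\mathbb{R}^N$,
\[
\langle \nabla g(\nu),v\rangle
= Df(Q)\bigl[D\Phi(\mathscr{M}(\nu))v\bigr]
= \bigl\langle \operatorname{grad} f(Q),\,D\Phi(\mathscr{M}(\nu))v\bigr\rangle .
\]
The second equality uses the defining property of the Riemannian gradient under the induced Euclidean metric: $Df(Q)[\xi]=\langle \operatorname{grad} f(Q),\xi\rangle$ for all $\xi\in T_Q\mathrm{St}(p,d)$. This holds because $D\Phi(\mathscr{M}(\nu))v$ lies in $T_Q\mathrm{St}(p,d)$, as $\Phi$ maps into the manifold.

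\textbf{Step 3: conclusion.} By Step 1 the left-hand side is zero for all $v$. By the surjectivity assumption, $D\Phi(\mathscr{M}(\nu))v$ ranges over all of $T_Q\mathrm{St}(p,d)$. So $\langle \operatorname{grad} f(Q),\xi\rangle=0$ for every $\xi\in T_Q\mathrm{St}(p,d)$. Taking $\xi=\operatorname{grad} f(Q)$, which itself lies in the tangent space, gives $\operatorname{grad} f(Q)=\mathbf{0}$. Equivalently, $\nabla g(\nu)=D\Phi^{*}\operatorname{grad} f(Q)$, and the adjoint of a surjection is injective.

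\textbf{Main obstacle.} The only delicate point is the chain-rule identity in Step 2, specifically the identification of $\nabla g$ with the pullback of $\operatorname{grad} f$. This needs $f$ to be differentiable at $Q$ in the Riemannian sense; differentiability of $g$ alone would not supply it. I would state this regularity assumption explicitly. I would also note that the submersion property invoked here is the one summarized in Appendix~\ref{subsec:auxiliary}, used only through the full-rank hypothesis.
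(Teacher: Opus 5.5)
Your proposal is correct and follows the paper's proof essentially step for step. A difference-quotient argument along $\delta_k\downarrow 0$ in both directions $\pm e_i$ gives $\partial_i g(\nu)=0$. The chain-rule identity $\langle \nabla g(\nu),v\rangle=\langle \operatorname{grad} f(Q),\,D\Phi(\mathscr{M}(\nu))v\rangle$, combined with surjectivity onto $T_Q\mathrm{St}(p,d)$, then forces $\operatorname{grad} f(Q)=\mathbf{0}$.

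One small refinement: the extra differentiability assumption on $f$ that you propose to add already follows from the stated hypotheses. The full-rank condition makes $\widetilde{\Phi}$ a submersion at $\nu$, so it has a smooth local section $\sigma$ with $\sigma(Q)=\nu$. Hence $f=g\circ\sigma$ near $Q$ is differentiable at $Q$ whenever $g$ is differentiable at $\nu$.
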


Theorem~\ref{thm:booom-stationarity-angles} establishes that persistent lack of descent in the RMPS polling directions forces stationarity of the pullback objective $g$ in the angle space. Under some additional regularity conditions, it ensures that when BOOOM stops within a run due to lack of descent, it also indicates that a local first-order optimality condition for $f$ is satisfied on the Stiefel manifold. 

\begin{remark}[Wrapping seams and regular points]
    Theorem~\ref{thm:booom-stationarity-angles} makes two key assumptions about the wrapping map to ensure stationarity of the objective function $f$. For the first assumption, note that the Lebesgue measure of the wrapping boundary is zero, and hence, with random restarts over the entire continuous domain, the probability of landing exactly on a seam is zero. Likewise, the rank deficiency of $D(\Phi \circ \mathscr{M})$ also occurs on a measure-zero set of specific angles governed by an algebraic identity; see~\cite{Edelman1998} for details. These facts justify the regularity assumptions used in Theorem~\ref{thm:booom-stationarity-angles}, and later on in Theorem~\ref{thm:booom-global-prob}.
\end{remark}

We next analyze the global exploration behavior of RMPS under restarts. The following result shows that, with full-support initialization, the algorithm eventually reaches a neighborhood of a global minimizer in angle space.

\begin{lemma}[Open-ball reachability of RMPS in angle space]\label{thm:booom-open-ball}
    Let $g$ be continuous on a compact, convex set $\Theta_0 \subset\mathbb{R}^N$ (here $\Theta_0 = [-\pi,\pi]^N$ under wrapping). Let $\nu^\star \in \argmin_{\nu\in \Theta_0} g(\nu)$ and fix $\delta>0$. Consider RMPS with polling along the canonical Euclidean basis vectors $\{\pm e_i\}_{i=1}^N$, step sizes $s^{(h)}\downarrow0$ via $s^{(h+1)}=s^{(h)}/\rho$ with $\rho > 1$, and between two successive runs, RMPS restarts from $\nu^{(r,0)}$ drawn i.i.d. from a distribution $\mu$ with full support on $\Theta_0$. Then, with probability $1$, there exists a run $r$ and iteration $h$ such that $\nu^{(r,h)} \in B_{\delta}(\nu^\star)$. 
\end{lemma}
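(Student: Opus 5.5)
The plan is a Borel--Cantelli style argument that uses only the restart points $\nu^{(r,0)}$. The polling dynamics within a run are irrelevant to the conclusion, which only asks that \emph{some} iterate $\nu^{(r,h)}$ enter $B_\delta(\nu^\star)$. The iteration $h=0$ of each run is itself an iterate, so it suffices to show that the initial points of the runs hit the ball almost surely.

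\textbf{Step 1: positive hitting probability.} Set $U = B_\delta(\nu^\star)\cap \Theta_0$. I will show $p_\delta := \mu(U) > 0$. Since $\nu^\star\in\Theta_0$ and $\Theta_0=[-\pi,\pi]^N$ is a convex body with nonempty interior, the set $U$ contains a nonempty relatively open subset of $\Theta_0$. For example, take the segment from $\nu^\star$ to the centre $\mathbf{0}$: for small $t>0$ the point $(1-t)\nu^\star$ lies in the interior of $\Theta_0$ and within $\delta/2$ of $\nu^\star$, so a small open ball around it lies inside $U$. Because $\mu$ has full support on $\Theta_0$, every nonempty relatively open subset of $\Theta_0$ has positive $\mu$-measure. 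Hence $p_\delta>0$. This is the only place where compactness and convexity of $\Theta_0$ are used. Continuity of $g$ is needed only so that $\nu^\star$ exists.

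\textbf{Step 2: independence across runs.} The restarts $\nu^{(r,0)}$, $r=1,2,\dots$, are i.i.d.\ with law $\mu$. Therefore
\[
\mathbb{P}\bigl(\nu^{(r,0)}\notin B_\delta(\nu^\star)\ \text{for all } r\le R\bigr) = (1-p_\delta)^R \longrightarrow 0 .
\]
It follows that the event that no restart ever lands in $B_\delta(\nu^\star)$ has probability $\lim_R (1-p_\delta)^R = 0$. Equivalently, by the second Borel--Cantelli lemma, infinitely many restarts land in the ball almost surely. Taking $h=0$ and any such $r$ gives the claim.

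\textbf{Main obstacle.} The delicate point is interpretive rather than technical: the statement needs infinitely many runs, and the restart distribution must be independent of the past trajectory. In Algorithm~\ref{euclid} restarts occur at the previous best point, so I will stress that the lemma concerns the idealized full-support restart scheme stated in its hypotheses, with no cap on \texttt{max\_runs}. If one instead wants a within-run iterate with $h\ge1$, the plan still works: wrapping keeps every iterate in $\Theta_0$, and the starting point is already an iterate. Optionally, I would add that the number of runs needed is geometric with mean $1/p_\delta$, which quantifies the result.
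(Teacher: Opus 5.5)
Your proof is correct. Its engine is exactly step (iii) of the paper's proof: the second Borel--Cantelli lemma applied to the i.i.d.\ full-support restarts, with the hit occurring at $h=0$. The difference is that you discard the within-run dynamics altogether. The paper additionally proves a mesh property: at fixed step $s$ the iterates stay on $\nu_{\mathrm{start}}+s\mathbb{Z}^N$ and visit each lattice point at most once. It also proves a fine-mesh approximation: once $s<\delta/(2\sqrt{N})$, the lattice meets $B_\delta(\nu^\star)$. It then splits into two cases: runs of bounded length, where your restart argument finishes, and a run that never terminates.

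Your leaner route is the more reliable one. The paper's second branch shows only that the lattice \emph{contains} a point of the ball, not that the iterates \emph{visit} it. With a non-terminating run the conclusion can genuinely fail. An iterate in a local basin whose axis-aligned polls never reach a narrow well around $\nu^\star$ stays trapped forever, and this happens for a set of starting points of positive $\mu$-measure. So your explicit stipulation that infinitely many runs occur, i.e.\ that every run stops after finitely many iterations, is precisely the hypothesis the lemma needs. The paper's mesh machinery does not remove that requirement.

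Your Step 1 is also more careful than the paper's ``shrink $\delta$ so that $B_\delta(\nu^\star)\subset\Theta_0$'', which is impossible when $\nu^\star\in\partial\Theta_0$. You do not even need convexity there. $B_\delta(\nu^\star)$ is an open neighbourhood of a point of $\operatorname{supp}\mu=\Theta_0$, so it has positive mass directly.

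One minor point: your aside about obtaining an iterate with $h\ge 1$ is not an argument, since the first successful poll at step $s_{\mathrm{initial}}$ can carry the iterate out of the ball. The lemma does not ask for that, so it does not affect your proof.
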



The restart mechanism in Lemma~\ref{thm:booom-open-ball} serves primarily as a technical device to establish global reachability. By performing i.i.d. restarts with full support, BOOOM iterations can reach arbitrarily close to the true solution of the constrained optimization problem. However, the i.i.d. restarts are not representative of practical implementations. In particular, the result does not capture the exploratory behavior of the RMPS principle within a single run or under a finite restart budget. A more realistic perspective on practical performance and implementation strategies is discussed below in Remark~\ref{rem:finite-restarts}.

\begin{remark}[Finite restarts and practical exploration]\label{rem:finite-restarts}
Lemma~\ref{thm:booom-open-ball} guarantees eventual entry into any prescribed open ball under \emph{infinitely many} restarts drawn from a distribution with full support, and this result is later used to prove global convergence of BOOOM. In practice, however, computational budgets limit the number of restarts. Two clarifications are helpful. First, the proof in Lemma~\ref{thm:booom-open-ball} uses only one ingredient from the pattern-search machinery: once the step size is sufficiently small, the mesh $\nu^{(0)}+s\mathbb{Z}^N$ becomes fine enough to intersect any open ball in the compact domain. It does \emph{not} formalize the cumulative exploratory effect of the larger step sizes used earlier in a run. Second, despite this deliberately conservative analysis, empirical evidence shows that even a \emph{finite} number of restarts (e.g., 2–10), together with the inherently exploratory polling of pattern search, is often sufficient to reach high-quality solutions across benchmark functions on diverse domains (simplex, sphere, hyperrectangle), where RMPS has repeatedly outperformed state-of-the-art global optimizers, including the state-of-the-art algorithms such as GA and SA~\citep{kim2026smart, Das2023MsiCOR, Das2022, Das2023RMPSH, Das2021}. A practical refinement that consistently improves performance is to initialize each new run (from the second run onward) at the best solution found by the previous run, while resetting the step size to a large value. This preserves the ability to escape local minima (via large exploratory moves at the start of each run) yet exploits information accumulated so far. Extensive numerical experiments for RMPS indicate that this strategy converges in a few runs while maintaining strong global search capabilities. Motivated by these observations, BOOOM adopts this finite-restart, warm-start scheme. Developing sharper theory that captures the exploration power of pattern-search polling under finite restart budgets, without relying on asymptotically many restarts, remains an interesting direction for future work.
\end{remark}

Before turning to global guarantees, we recall a standard second-order sufficient condition (SOSC) on Riemannian manifolds: a positive-definite Riemannian Hessian at a critical point implies local minimality and local strong geodesic convexity (see, e.g.,
\citet[Sec.~5]{AbsilMahonySepulchre2008}, \citet[Ch.~5]{Lee2018} and \citet[Sec.~7]{Boumal2023}).

\begin{lemma}[SOSC implies local minimality and local strong convexity]
\label{thm:sosc-local}
    Assume $f$ is $\mathcal{C}^2$ in a neighborhood of $Q^\dagger\in\mathrm{St}(p,d)$, $\operatorname{grad} f(Q^\dagger)=\bm{0}$, and there exists $\mu>0$ such that the Riemannian Hessian satisfies
    \begin{equation*}
        \big\langle \operatorname{Hess} f(Q^\dagger)[\xi],\,\xi\big\rangle \;\ge\; \mu\,\|\xi\|^2 \qquad\text{for all } \xi\in T_{Q^\dagger}\mathrm{St}(p,d).       
    \end{equation*}
    \noindent Then there is a normal neighborhood $\mathcal V$ of $Q^\dagger$ such that
    \begin{enumerate}
        \item[(i)] $Q^\dagger$ is the unique minimizer of $f$ over $\mathcal V$; and;
        \item[(ii)] along every geodesic $\gamma$ contained in $\mathcal V$,
        \begin{equation*}
            f(\gamma(t)) \;\ge\; f(Q^\dagger) \;+\; \tfrac{\mu}{4}\,t^2,
        \end{equation*}
        \noindent for all sufficiently small $t > 0$, so that $f$ is (locally) strongly geodesically convex on $\mathcal{V}$.
    \end{enumerate}    
\end{lemma}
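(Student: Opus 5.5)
The plan is to work in Riemannian normal coordinates at $Q^\dagger$ and reduce both claims to a second-order Taylor expansion of the pullback $\hat f := f\circ \mathrm{Exp}_{Q^\dagger}$ on the tangent space $T_{Q^\dagger}\mathrm{St}(p,d)$, equipped with the Euclidean inner product $\langle\cdot,\cdot\rangle$ of Section~\ref{sec:notations}. Since $\mathrm{St}(p,d)$ is a smooth embedded Riemannian submanifold, the exponential map is a diffeomorphism from a ball $B_r(0)\subset T_{Q^\dagger}\mathrm{St}(p,d)$ onto a normal neighborhood. Since $f$ is $\mathcal C^2$ near $Q^\dagger$, $\hat f$ is $\mathcal C^2$ on $B_r(0)$ for $r$ small. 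The key identity I will use is the standard fact (\citealp[Sec.~5]{AbsilMahonySepulchre2008}, \citealp[Sec.~5.5]{Boumal2023}) that for the exponential map, a second-order retraction,
\[
\nabla \hat f(0)=\operatorname{grad} f(Q^\dagger)=\bm 0,\qquad \nabla^2\hat f(0)=\operatorname{Hess} f(Q^\dagger).
\]
Equivalently, for every unit $\xi$ the geodesic $\gamma_\xi(t)=\mathrm{Exp}_{Q^\dagger}(t\xi)$ satisfies $\frac{d}{dt}f(\gamma_\xi(t))|_{0}=\langle\operatorname{grad} f,\xi\rangle$ and $\frac{d^2}{dt^2}f(\gamma_\xi(t))|_{0}=\langle \operatorname{Hess} f(Q^\dagger)[\xi],\xi\rangle$, the latter because $\ddot\gamma$ has zero tangential component.

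\textbf{Step 1 (uniform Hessian bound).} By continuity of $\nabla^2\hat f$ at $0$, choose $r'\le r$ such that $\|\nabla^2\hat f(v)-\nabla^2\hat f(0)\|_{\mathrm{op}}\le \mu/2$ for $\|v\|<r'$. Then $\langle\nabla^2\hat f(v)\xi,\xi\rangle\ge \tfrac{\mu}{2}\|\xi\|^2$ on $B_{r'}(0)$.

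\textbf{Step 2 (Taylor expansion).} For $v\in B_{r'}(0)$, Taylor's formula with integral remainder along the segment $t\mapsto tv$ gives $\hat f(v)\ge \hat f(0)+\langle\nabla\hat f(0),v\rangle+\tfrac{\mu}{4}\|v\|^2=f(Q^\dagger)+\tfrac{\mu}{4}\|v\|^2$. Set $\mathcal V=\mathrm{Exp}_{Q^\dagger}(B_{r'}(0))$. Every point of $\mathcal V\setminus\{Q^\dagger\}$ has a preimage $v\neq0$, so its value strictly exceeds $f(Q^\dagger)$. This proves (i).

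\textbf{Step 3 (geodesic growth).} Radial geodesics $\gamma(t)=\mathrm{Exp}(t\xi)$ with unit $\xi$ correspond to $v=t\xi$. Step 2 then yields $f(\gamma(t))\ge f(Q^\dagger)+\tfrac{\mu}{4}t^2$ for $0<t<r'$. For strong geodesic convexity along arbitrary geodesics $\gamma\subset\mathcal V$, I will use $\frac{d^2}{dt^2}f(\gamma(t))=\langle\operatorname{Hess} f(\gamma(t))[\dot\gamma],\dot\gamma\rangle$. Continuity of the Hessian, after shrinking $\mathcal V$, keeps this quantity at least $\tfrac{\mu}{2}\|\dot\gamma\|^2$, so $f\circ\gamma$ is strongly convex. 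Since (ii) as stated is an inequality measured from $Q^\dagger$, I will read it for geodesics emanating from $Q^\dagger$ and note the general convexity as a consequence.

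\textbf{Main obstacle.} The delicate point is Step 3's passage from a Hessian bound at the single point $Q^\dagger$ to a bound throughout a neighborhood for non-radial geodesics. This requires continuity of $Q\mapsto\operatorname{Hess} f(Q)$ in a fixed frame, for which $\mathcal C^2$ regularity suffices via a local orthonormal frame of the tangent bundle. The constant $\mu/4$ comes from halving twice: once for the continuity margin in Step 1 and once from the factor $\tfrac12$ in Taylor's formula.
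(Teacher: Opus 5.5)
Your proof is correct. The paper gives no proof of this lemma; it recalls it as a standard second-order sufficient condition and cites Absil--Mahony--Sepulchre, Lee, and Boumal. Your argument is the textbook proof those citations point to: pull back to $\hat f=f\circ\mathrm{Exp}_{Q^\dagger}$, identify $\nabla^2\hat f(0)$ with $\operatorname{Hess} f(Q^\dagger)$, get a $\mu/2$ margin by continuity, and apply the integral form of Taylor's theorem on the convex ball $B_{r'}(0)$. It recovers the stated constant $\mu/4$ exactly.

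What your self-contained version adds over the bare citation is that it exposes the implicit hypotheses of (ii). Along a radial geodesic of speed $c$, your Step 2 gives only $f(\gamma(t))\ge f(Q^\dagger)+\tfrac{\mu}{4}c^2t^2$. So the stated bound genuinely requires unit-speed geodesics starting at $Q^\dagger$, and your choice of unit $\xi$ is a necessity, not a convenience. For geodesics with $\gamma(0)\neq Q^\dagger$, the bound holds trivially for small $t$, because $f(\gamma(0))>f(Q^\dagger)$ by (i) and $f\circ\gamma$ is continuous.

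Likewise, the concluding claim of strong geodesic convexity on $\mathcal V$ is really the separate estimate $\tfrac{d^2}{dt^2}f(\gamma(t))\ge\tfrac{\mu}{2}\|\dot\gamma\|^2$ from your Step 3. It is cleanest if you also shrink $\mathcal V$ to a geodesically convex normal ball (Whitehead's theorem), so that any two points of $\mathcal V$ are joined by a geodesic lying in $\mathcal V$.
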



Combining the reachability result in Lemma~\ref{thm:booom-open-ball} with the local structure provided by Lemma~\ref{thm:sosc-local}, we obtain the following almost sure global convergence guarantee for BOOOM under full-support restarts.

\begin{theorem}[Global convergence of BOOOM]\label{thm:booom-global-prob}
    Assume that the objective function $f : \mathrm{St}(p,d)\to\mathbb{R}$ as in~\eqref{eq:stiefel_problem} satisfies the following property: (i) $f \in \mathcal{C}^0$, i.e., it is continuous; (ii) $f$ has at least one global minimizer $Q^\dagger$ and $f \in \mathcal{C}^2$ in a neighborhood of $Q^\dagger$, $\operatorname{grad} f(Q^\dagger)=\bm{0}$, and the SOSC in Lemma~\ref{thm:sosc-local} holds at $Q^\dagger$; (iii) there exists $\nu^\dagger \in \Theta_0 = [-\pi, \pi]^N$ with $\widetilde{\Phi}(\nu^\dagger)=Q^\dagger$ such that $\nu^\dagger$ is not on a wrapping boundary and $D\Phi(\mathscr{M}(\nu^\dagger))$ has full rank. Suppose, we run the BOOOM algorithm on $g = f \circ \tilde{\Phi}$ over a compact set $\Theta_0 \subset \mathbb{R}^N$ with step sizes $s^{(h)} \downarrow 0$ in each iteration by the geometric rate (as in Lemma~\ref{thm:booom-open-ball}) and i.i.d. full-support restarts on $\Theta_0$ according to a common probability measure $\mu$. Then, with probability $1$ under $\mu$, the geodesic distance 
    \begin{equation*}
        \operatorname{dist}\big(\widetilde{\Phi}(\nu^{(r, h)}),\,\operatorname{argmin}_{\mathrm{St}(p,d)} f\big) \to 0,
    \end{equation*}
    \noindent as either the number of runs $r \to \infty$ or the number of iterations within a run $h \to \infty$.
\end{theorem}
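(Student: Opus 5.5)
The plan is to combine Lemma~\ref{thm:booom-open-ball} (reachability in angle space) with Lemma~\ref{thm:sosc-local} (local trapping near $Q^\dagger$). Continuity of $\widetilde{\Phi}$ then transfers angle-space proximity to geodesic proximity on $\mathrm{St}(p,d)$. Throughout, the monotonicity of the RMPS iterates is the glue: within a run, $g(\nu^{(r,h)})$ is nonincreasing in $h$, and across runs the best value found so far is nonincreasing.

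\textbf{Step 1 (a sublevel-set neighborhood).} Apply Lemma~\ref{thm:sosc-local} at $Q^\dagger$ to get a normal neighborhood $\mathcal V$ in which $Q^\dagger$ is the unique minimizer. Given any $\varepsilon>0$, shrink to a geodesic ball $B^{\mathrm{St}}_\varepsilon(Q^\dagger)\subset\mathcal V$. By the quadratic growth in (ii), $f>f^\star+c\varepsilon^2$ on the sphere $\partial B^{\mathrm{St}}_\varepsilon(Q^\dagger)$ for some $c>0$.

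To handle points of $\mathrm{St}(p,d)$ far from $Q^\dagger$, work with the full argmin set $\mathcal{A}=\operatorname{argmin} f$. By compactness of $\mathrm{St}(p,d)$ and continuity of $f$, the quantity
\[
\eta(\varepsilon):=\min\{f(Q)-f^\star:\operatorname{dist}(Q,\mathcal A)\ge\varepsilon\}
\]
is strictly positive. Consequently every $Q$ with $f(Q)<f^\star+\eta(\varepsilon)$ satisfies $\operatorname{dist}(Q,\mathcal A)<\varepsilon$. This is the key reduction: the distance statement follows from convergence of the \emph{values} $g(\nu^{(r,h)})\to f^\star$.

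\textbf{Step 2 (reaching the sublevel set).} By continuity of $g=f\circ\widetilde{\Phi}$ at $\nu^\dagger$, which is a regular non-seam point by assumption (iii), pick $\delta>0$ such that $g<f^\star+\eta(\varepsilon)$ on $B_\delta(\nu^\dagger)$. By Proposition~\ref{thm:givens_param}, $\nu^\dagger$ is a global minimizer of $g$ on $\Theta_0$. Lemma~\ref{thm:booom-open-ball} therefore gives, almost surely, some $(r_0,h_0)$ with $\nu^{(r_0,h_0)}\in B_\delta(\nu^\dagger)$. Monotonicity then gives $g(\nu^{(r_0,h)})<f^\star+\eta(\varepsilon)$ for all $h\ge h_0$, so $\operatorname{dist}(\widetilde{\Phi}(\nu^{(r_0,h)}),\mathcal A)<\varepsilon$. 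Because BOOOM warm-starts each run at the incumbent, or equivalently because one tracks the incumbent best iterate, the same bound persists for all later runs $r\ge r_0$.

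Intersecting over a countable sequence $\varepsilon_k\downarrow0$ preserves probability one. This yields the distance tending to $0$ as $r\to\infty$, and within the successful run as $h\to\infty$. The SOSC, the full-rank condition, and Theorem~\ref{thm:booom-stationarity-angles} are used to refine the picture: once the iterate lies in the basin $\mathcal V$, unsuccessful polls can only stall at a critical point. Inside $\mathcal V$ the only critical point is $Q^\dagger$, so the within-run limit points are exactly $Q^\dagger$.

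\textbf{Main obstacle.} The delicate step is reconciling the i.i.d.\ restarts of Lemma~\ref{thm:booom-open-ball} with the claim ``as $r\to\infty$''. A fresh restart can land far from $\mathcal A$, so the raw iterate $\nu^{(r,0)}$ does not converge. I would first resolve this by interpreting $\nu^{(r,h)}$ as the incumbent (best-so-far) iterate, which is how BOOOM actually restarts. If instead the raw iterates must be controlled, I would fall back on a liminf statement over runs, combined with the fact that within any run that enters $B_\delta(\nu^\dagger)$ the monotone descent keeps it in the sublevel set. A second technical point is ensuring $\eta(\varepsilon)>0$ when $\mathcal A$ is not a single point; using $\operatorname{dist}(\cdot,\mathcal A)$ rather than distance to $Q^\dagger$ avoids this issue.
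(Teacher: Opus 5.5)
Your main argument is correct, provided ``$r\to\infty$'' is read for the best-so-far iterate, and it takes a genuinely different route from the paper.

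\textbf{The paper's route.} The paper argues locally. Once Lemma~\ref{thm:booom-open-ball} places an iterate in $B_\epsilon(\nu^\dagger)$, it uses the full-rank hypothesis to make $\widetilde{\Phi}$ a submersion there, and the SOSC to get local strong geodesic convexity of $f$. It then asserts that $g$ is convex on the ball and satisfies $\sup_{B_\epsilon(\nu^\dagger)} g\le \inf_{\nu\notin B_\epsilon(\nu^\dagger)} g+\tau_1$, so the run can only keep shrinking its step. Finally it invokes Theorem~\ref{thm:booom-stationarity-angles} and the uniqueness in Lemma~\ref{thm:sosc-local} to conclude that the stalled iterate maps exactly to $Q^\dagger$, with restarts unable to beat a global minimum.

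\textbf{Your route.} You use compactness to turn the distance statement into a value statement via $\eta(\varepsilon)>0$. After that you need only continuity of $g$ at $\nu^\dagger$, reachability, and monotonicity of accepted moves.

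\textbf{What each buys.} In your argument the $\mathcal{C}^2$/SOSC part of (ii) and the full-rank part of (iii) are never used, and several global minimizers cause no trouble. You also avoid the paper's weakest steps:
\begin{itemize}
\item Convexity of $g=f\circ\widetilde{\Phi}$ on a Euclidean ball does not follow from geodesic convexity of $f$, since $\widetilde{\Phi}$ is nonlinear.
\item Theorem~\ref{thm:booom-stationarity-angles} needs unsuccessful polls at every scale at one fixed point, which a moving iterate does not supply.
\end{itemize}
What the paper's route aims at, and yours does not deliver, is the sharper claim that the successful run converges to $Q^\dagger$ itself. That is the only place the extra hypotheses would matter.

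\textbf{Where your write-up over-reaches.}
\begin{itemize}
\item \emph{Warm starts are not equivalent to incumbent tracking.} Warm-starting at the incumbent replaces the i.i.d.\ full-support restarts on which Lemma~\ref{thm:booom-open-ball}, and hence your Step~2, depends. Keep the i.i.d.\ dynamics and state the conclusion for the incumbent. The raw restart points $\nu^{(r,0)}$ are i.i.d., so by the second Borel--Cantelli lemma their images are in general infinitely often a fixed distance away from $\operatorname{argmin} f$; the literal claim therefore fails. The paper's closing remark that restarts ``never yield a better value'' silently makes the same reinterpretation.
\item \emph{Intersecting over $\varepsilon_k\downarrow 0$ does not give within-run convergence as $h\to\infty$.} The run that succeeds for $\varepsilon_k$ may change with $k$. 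For a fixed run you only get $\limsup_h \operatorname{dist}<\varepsilon$.
\end{itemize}

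Your SOSC refinement for the within-run claim has the same gaps as the paper's last step. It would need:
\begin{itemize}
\item a limit-point version of Theorem~\ref{thm:booom-stationarity-angles}, namely that limit points of unsuccessful-poll iterates with $s\to 0$ are stationary, via continuity of $\nabla g$;
\item full rank of $D\widetilde{\Phi}$ at that limit point, not only at $\nu^\dagger$;
\item confinement of the run to a neighbourhood of $\nu^\dagger$ whose image lies in $\mathcal V$.
\end{itemize}
The last point is not automatic: your sublevel set controls distance to all of $\operatorname{argmin} f$, not to $Q^\dagger$, and the step size at entry may still be large.
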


As a complement to the global-in-probability guarantee established above, we conclude with a per-run complexity bound for RMPS on the pulled-back objective. Let $k$ denote the number of step-size reductions within a run. When $g$ is convex and $L$-smooth on the compact angle hyperrectangle $\Theta_0 = [-\pi, \pi]^N$, the suboptimality at the iterates where the step size is reduced decays at $O(1/(k+1))$ rate. This provides a baseline efficiency guarantee for a single run and makes explicit how the reduction factor $\rho$ and the dimension $N$ enter the complexity of the algorithm.

\begin{theorem}[Sublinear complexity for convex function]\label{thm:booom-rate}
Let $g$ be convex and $\mathcal{C}^1$ on a compact convex set $\Theta_0 \subset \mathbb{R}^N$ (e.g., $\Theta_0 = [-\pi, \pi]^N$), with its gradient satisfying a Lipschitz condition with Lipschitz constant $L$. Consider a single RMPS run (without any restart) with initial step size $s_0$ and geometric reductions $s_k = s_0/\rho^k$ having $\rho > 1$, performed \emph{only} after an unsuccessful poll. Let $\nu_k$ denote the iterate at the (unsuccessful) poll that triggers the $k$-th reduction of the step size to $s_k$, and let $\nu^\star\in\argmin_{\Theta_0} g$. Then, for all $k\ge 0$,
\begin{equation*}
    g(\nu_k)-g(\nu^\star)\ \le\ \frac{LN s_0^2}{8(\rho^2 - 1)(k+1)}.
\end{equation*}
\end{theorem}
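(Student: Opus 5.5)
The plan is to combine the standard pattern-search gradient bound at unsuccessful polls with a convexity argument on the compact box $\Theta_0$.

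\textbf{Step 1: gradient bound at an unsuccessful poll.} At $\nu_k$ the poll at scale $s_{k-1}$ (the scale in force before the $k$-th reduction) fails, so $g(\nu_k)\le g(\nu_k\pm s_{k-1}e_i)$ for every $i$. The $L$-smoothness descent lemma gives
\[
g(\nu_k\pm s e_i)\le g(\nu_k)\pm s\,\partial_i g(\nu_k)+\tfrac{L}{2}s^2 .
\]
Choosing the sign that makes the linear term negative yields $|\partial_i g(\nu_k)|\le \tfrac{L}{2}s_{k-1}$ for each coordinate. Summing over coordinates gives
\[
\|\nabla g(\nu_k)\|^2\le \tfrac{L^2N}{4}s_{k-1}^2 .
\]
I will keep the form $\|\nabla g(\nu_k)\|^2\le \tfrac{L^2 N}{4}s^2$, with $s$ the scale of the failed poll.

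\textbf{Step 2: suboptimality from the gradient.} Convexity gives $g(\nu_k)-g(\nu^\star)\le \langle\nabla g(\nu_k),\nu_k-\nu^\star\rangle$. Using the diameter of $\Theta_0$ here would only give an $O(s_k)=O(\rho^{-k})$ bound with a diameter constant, which has the wrong form. The stated constant $LN s_0^2/8$ has no diameter factor. It instead suggests a bound of the form $\tfrac{1}{2L}\|\nabla g\|^2$, summed geometrically through $\sum_{j\ge0}\rho^{-2j}=\rho^2/(\rho^2-1)$.

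So the plan is to use the Polyak-type inequality available for $L$-smooth convex functions whose minimizer is interior. Co-coercivity gives
\[
g(\nu)-g(\nu^\star)\ge \tfrac{1}{2L}\|\nabla g(\nu)\|^2 .
\]
This points the wrong way, so I would instead reduce to a one-step quantity. The descent step toward $\nu_k-\tfrac1L\nabla g(\nu_k)$ shows that the decrease available from $\nu_k$ is at most $\tfrac{1}{2L}\|\nabla g(\nu_k)\|^2\le \tfrac{LN}{8}s^2$. Combining this with the standard descent-method argument that gives $1/(k+1)$ rates, namely $\Delta_k\le \sum$ of the remaining decreases, telescoped against the successful iterations between reductions, should produce
\[
\Delta_k\le \frac{LN}{8}\,s_0^2\sum_{j\ge k}\rho^{-2j},
\]
which is of order $\rho^{-2k}s_0^2/(\rho^2-1)$. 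Finally I use $\rho^{-2k}\le 1/(k+1)$ up to the adjustment $\rho^{2k}\ge 1+k(\rho^2-1)$, so that $\rho^{-2k}/(\rho^2-1)\lesssim 1/(k+1)$ in the form stated.

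\textbf{Main obstacle.} The delicate step is Step 2: converting the gradient bound into a function-value bound without a diameter constant. The lower bound $\tfrac{1}{2L}\|\nabla g\|^2$ points the wrong way, so some care is needed. If the telescoping route fails, I would fall back on the convexity inequality with a bounded level set, using $\|\nu_k-\nu^\star\|$ controlled inside $\Theta_0$. I would then absorb constants by tracking $s_{k-1}$ versus $s_k$ and the factor $\rho^2-1$ via Bernoulli's inequality. I would also need to treat minimizers on the boundary of $\Theta_0$ through periodicity: under wrapping, $g$ is periodic, so the minimizer can be taken to be interior.
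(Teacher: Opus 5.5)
Your Step~1 is correct and matches the first half of the paper's proof. Step~2, however, is a genuine gap that cannot be closed.

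The one-step claim is backwards. The descent lemma says a gradient step from $\nu_k$ decreases $g$ by \emph{at least} $\tfrac{1}{2L}\|\nabla g(\nu_k)\|^2$. For a merely convex $L$-smooth $g$ there is no upper bound of $g(\nu_k)-g(\nu^\star)$ by a multiple of $\|\nabla g(\nu_k)\|^2$; that is a gradient-dominance property, e.g.\ from strong convexity. The telescoping also has nothing to stand on. A phase at scale $s_j$ may contain arbitrarily many successful moves, so its total decrease is not controlled by $\tfrac{LN}{8}s_j^2$.

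Your fallback, $g(\nu_k)-g(\nu^\star)\le\langle\nabla g(\nu_k),\nu_k-\nu^\star\rangle\le\tfrac{L}{2}\sqrt{N}\,s\,\operatorname{diam}(\Theta_0)$, is valid. But the diameter cannot be absorbed into the stated constant. Periodicity does not help either: if $g$ is the restriction of a $\mathcal{C}^1$, $2\pi$-periodic function, convexity on the box already forces $g$ to be constant, so the wrapped case is vacuous.

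In fact the diameter-free bound is false, so no argument can deliver it. Take $N=2$, $0<\epsilon<L$, and $g(x)=\tfrac{L}{4}(x_1+x_2)^2+\tfrac{\epsilon}{4}(x_1-x_2)^2$. This is convex with Hessian eigenvalues $L$ and $\epsilon$ and minimum $0$ at the origin.
\begin{itemize}
\item At $\nu=(1,-1)$ the four poll differences are $g(\nu\pm s e_i)-g(\nu)=\tfrac{L+\epsilon}{4}s^2\pm\epsilon s$.
\item So a run started at $\nu$ with $s_0=4\epsilon/L$ fails its very first poll. For small $\epsilon$ all points stay in $[-\pi,\pi]^2$.
\item Then $g(\nu)-g(\nu^\star)=\epsilon$, whereas the claimed bound is $\tfrac{LNs_0^2}{8(\rho^2-1)}=\tfrac{4\epsilon^2}{L(\rho^2-1)}$ (half of that under your indexing with $k=1$).
\item This fails once $\epsilon<L(\rho^2-1)/4$. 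Note that $\tfrac{1}{2L}\|\nabla g(\nu)\|^2=\epsilon^2/L\ll\epsilon$ here.
\end{itemize}

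The paper's proof is your Step~1 followed by the assertion $g(\nu_k)-g(\nu^\star)\le\tfrac{1}{2L}\|\nabla g(\nu_k)\|^2$, credited to Nesterov. This is exactly the reversed inequality you flagged. Its final step, $\tfrac{1}{1+k(\rho^2-1)}\le\tfrac{1}{(\rho^2-1)(k+1)}$, additionally requires $\rho^2\le 2$.

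What is actually provable is your diameter bound, which yields a geometric rate $O\big(\operatorname{diam}(\Theta_0)L\sqrt{N}s_0\rho^{-k}\big)$. Alternatively, under $\mu$-strong convexity, $g(\nu_k)-g(\nu^\star)\le\tfrac{1}{2\mu}\|\nabla g(\nu_k)\|^2\le\tfrac{L^2N}{8\mu}s^2$.
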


This result shows that BOOOM, when restricted to a single run and applied to a convex smooth pulled-back objective, achieves a sublinear $O(1/(k+1))$ decrease in optimality. This provides a baseline efficiency guarantee for the derivative-free polling mechanism and makes explicit how the smoothness constant, the reduction factor, and the dimension enter the bound. It is important, however, to interpret this rate with care: the index \(k\) counts only the unsuccessful polls that trigger a decrease in step size, not the total number of iterations or function evaluations. 

A crude bound on the total number of iterations can be obtained by noting that at step size $s$, BOOOM searches on the mesh $\nu_{\text{start}} + s \mathbb{Z}^N$ within the region $\Theta_0 = [-\pi, \pi]^N$. As a result, for a fixed step size $s$, it performs at most $(2\pi/s)^N$ steps. Therefore, if one wishes to obtain a solution $\hat{\nu}$ such that $g(\hat{\nu}) \in (g(\nu^\star), g(\nu^\star) + \delta)$, Theorem~\ref{thm:booom-rate} implies at most $O(1/\delta)$-many step size reductions, and hence the total number of iteration becomes bounded by the order of
\begin{equation*}
    \left(\frac{2\pi}{s_0}\right)^N \left( 1 + \rho^{-N} + \dots + \rho^{-N O(1/\delta)} \right) = O\left( \left(\frac{2\pi}{s_0}\right)^N \rho^{-N / \delta} \right).
\end{equation*}
\noindent However, for practical purposes, the number of iterations spent at a fixed step size can depend strongly on the objective function and on the local geometry of the search landscape 

Thus, Theorem~\ref{thm:booom-rate} should be viewed primarily as a per-run progress guarantee at the level of scale refinement, rather than as a full computational complexity bound in the usual first-order sense. Even with this qualification, the result is still informative. It shows that BOOOM retains a provable efficiency guarantee while operating as a zero-order method based only on function evaluations. Moreover, this per-run local descent guarantee complements the earlier restart-based reachability results: Theorems~\ref{thm:booom-stationarity-angles} and~\ref{thm:booom-rate} quantify efficiency and convergence within a single run, while Theorem~\ref{thm:booom-global-prob} address global exploration across runs. Taken together, these results highlight the dual role of BOOOM as a method that combines structured local descent with broad global search.

\section{Evaluation on Classical Benchmark Functions}\label{sec:class_bench}
To assess the performance of BOOOM, we adapt four classical benchmark functions: Ackley, Griewank, Rosenbrock, and Rastrigin, to the Stiefel manifold $\mathrm{St}(p,p)$, thereby embedding standard optimization landscapes into an orthogonality-constrained setting. We refer to \cite{surjanovic2013virtual} for the original benchmark definitions. For each orthogonal matrix $O \in \mathrm{St}(p,p)$, we form two vectors: the diagonal entries $x_D=\operatorname{diag}(O)$ and the vector $x_{OD}$ of all off-diagonal entries of $O$. The modified benchmark objective is defined as the sum of two standard benchmark functions, one applied to the diagonal part and the other to the off-diagonal part. For the Ackley, Griewank, and Rastrigin cases, we use the transformed inputs $10(x_D-\mathbf{1}_p)$ and $10x_{OD}$, so that the identity matrix corresponds to the global minimizer. For the Rosenbrock case, we apply the function to $x_D$ and $x_{OD}+\mathbf{1}_{p(p-1)}$, respectively, which again yields its minimum at the identity matrix. The scaling by $10$ in the Ackley, Griewank, and Rastrigin settings increases the oscillatory behavior of the resulting objective and produces more challenging multimodal landscapes on the Stiefel manifold. All experiments are implemented in MATLAB and executed on a high-performance computing environment equipped with AMD EPYC 9534 processors (20 cores) and 256 GB RAM. We compare BOOOM and its parallel version, BOOOM-Parallel, with a range of established optimization methods, including MATLAB's \texttt{fmincon} solvers (Active-set, Interior-point, and Sequential Quadratic Programming (SQP)), as well as Riemannian optimization algorithms implemented via the \texttt{Manopt} toolbox \citep{boumal2014manopt}. The latter include Riemannian Barzilai-Borwein (RBB), Riemannian Gradient Descent (RGD), Riemannian Conjugate Gradient (RCG), and Riemannian Trust-Region (RTR) methods.

\begin{sidewaystable}[p]
\centering
\resizebox{\columnwidth}{!}{%
\begin{tabular}{l|l|ccc|ccc|ccc|ccc}
\hline
\multicolumn{1}{c|}{\multirow{2}{*}{Functions}} & \multicolumn{1}{c|}{\multirow{2}{*}{Methods}} & \multicolumn{3}{c|}{$p=10$} & \multicolumn{3}{c|}{$p=20$} & \multicolumn{3}{c|}{$p=50$} & \multicolumn{3}{c}{$p=100$} \\ \cline{3-14} 
\multicolumn{1}{c|}{} & \multicolumn{1}{c|}{} & min value & s.e. of values & mean time (s.e.) & min value & s.e. of values & mean time (s.e.) & min value & s.e. of values & mean time (s.e.) & min value & s.e. of values & mean time (s.e.) \\ \hline
 & BOOOM & \textbf{1.60E+01} & 2.01E-01 & 10.35 (3.833) & \textbf{1.34E+01} & 1.60E-01 & 2295.42 (209.333) & \textbf{1.74E+01} & 1.38E-01 & 18000.18 (0.044) & \textbf{1.91E+01} & 3.23E-02 & 18001.30 (0.314) \\
 & BOOOM-Parallel & \textbf{1.60E+01} & 2.01E-01 & 792.88 (322.553) & \textbf{1.34E+01} & 1.61E-01 & 3448.26 (151.948) & \textbf{1.74E+01} & 1.34E-01 & 18000.94 (0.260) & \textbf{1.90E+01} & 5.87E-02 & 18002.03 (0.464) \\
 & Active-Set (fmincon) & 2.75E+01 & 9.04E-02 & 29.42 (1.756) & 2.53E+01 & 4.85E-02 & 137.81 (6.695) & — & — & — & — & — & — \\
 & Interior-Point (fmincon) & 2.76E+01 & 1.07E-01 & 15.89 (0.745) & 2.63E+01 & 5.02E-02 & 17.76 (0.866) & — & — & — & — & — & — \\
\multicolumn{1}{c|}{Ackley} & SQP (fmincon) & 2.76E+01 & 8.59E-02 & 23.05 (1.513) & 2.53E+01 & 4.50E-02 & 112.25 (3.656) & — & — & —  & — & — & — \\
 & Riemannian Barzilai–Borwein (RBB) & \textbf{1.88E+01} & 9.22E-01 & 643.88 (43.018) & \textbf{1.58E+01} & 9.74E-01 & 2974.16 (84.085) & \textbf{4.63E+00} & 2.34E+00 & 18007.36 (1.661) & 1.95E+01 & 3.15E-01 & 18133.38 (31.741) \\
 & Riemannian Conjugate Gradient (RCG) & 2.76E+01 & 8.20E-02 & 121.36 (8.564) & 2.53E+01 & 4.47E-02 & 1111.36 (127.927) & 2.30E+01 & 2.50E-02 & 16870.36 (792.598) & 2.19E+01 & 1.88E-02 & 18312.86 (48.711) \\
 & Riemannian Gradient Descent (RGD) & 2.73E+01 & 1.22E-01 & 475.20 (63.466) & 2.53E+01 & 6.27E-02 & 2996.57 (21.068) & 2.30E+01 & 3.35E-02 & 18013.79 (1.486) & 2.24E+01 & 2.54E-02 & 18103.39 (19.820) \\
 & Riemannian Trust-Region (RTR) & 2.77E+01 & 7.68E-02 & 1533.21 (16.257) & 2.53E+01 & 4.73E-02 & 3603.85 (0.472) & 2.30E+01 & 2.49E-02 & 18347.79 (97.217) & 2.21E+01 & 4.54E-02 & 19010.00 (219.932) \\ \hline
 & BOOOM & 3.76E-01 & 5.09E-02 & 23.33 (5.119) & 2.78E-01 & 1.47E-02 & 2661.47 (72.994) & 1.38E-01 & 1.03E-02 & 18000.18 (0.029) & \textbf{1.83E-01} & 1.28E-02 & 18001.17   (0.229) \\
 & BOOOM-Parallel & 3.76E-01 & 5.09E-02 & 2376.10 (475.550) & 2.78E-01 & 1.46E-02 & 3542.53 (57.682) & 1.38E-01 & 1.03E-02 & 3837.64 (36.230) & \textbf{1.18E-01} & 8.89E-03 & 12843.28 (163.015) \\
 & Active-Set (fmincon) & \textbf{2.86E-01} & 1.28E-01 & 21.97 (1.976) & \textbf{1.31E-01} & 4.34E-02 & 119.67 (4.359) & — & — & — & — & — & —  \\
 & Interior-Point (fmincon) & 5.27E-01 & 1.63E-01 & 14.34 (0.725) & 1.97E+00 & 8.17E-02 & 18.10 (0.782) & — & — & — & — & — & — \\
\multicolumn{1}{c|}{Griewank} & SQP (fmincon) & 3.98E-01 & 1.06E-01 & 22.16 (1.139) & 1.36E-01 & 1.32E-01 & 130.44 (8.369) & — & — & —  & — & — & — \\
 & Riemannian Barzilai–Borwein (RBB) & \textbf{3.34E-01} & 8.50E-02 & 635.74 (33.518) & 1.94E-01 & 7.13E-02 & 2631.25 (207.175) & \textbf{1.17E-01} & 9.03E-02 & 18013.50 (2.409) & 1.85E+00 & 2.28E-01 & 18134.86 (22.344) \\
 & Riemannian Conjugate Gradient (RCG) & 4.63E-01 & 6.18E-02 & 412.46 (65.008) & \textbf{1.31E-01} & 3.32E-02 & 2947.76 (72.807) & 1.23E-01 & 6.85E-03 & 18008.85 (2.048) & 4.96E-01 & 8.62E-02 & 18109.59 (23.740) \\
 & Riemannian Gradient Descent (RGD) & 4.63E-01 & 1.01E-01 & 670.17 (6.837) & \textbf{1.33E-01} & 9.58E-02 & 3015.87 (24.686) & 2.43E-01 & 4.32E-02 & 18009.40 (2.157) & 5.82E+00 & 2.89E-02 & 18196.16 (53.924) \\
 & Riemannian Trust-Region (RTR) & 4.87E-01 & 5.14E-02 & 1016.76 (240.076) & \textbf{1.31E-01} & 3.42E-02 & 3119.28 (232.775) & \textbf{1.05E-01} & 8.17E-03 & 18709.43 (126.843) & 2.72E+00 & 1.08E-01 & 18715.37 (248.280) \\ \hline
 & BOOOM & 1.57E+02 & 6.62E+01 & 8.91 (1.872) & \textbf{5.55E+02} & 1.26E+02 & 475.65 (52.463) & \textbf{1.88E+03} & 1.45E+02 & 7600.97 (23.482) & \textbf{5.78E+03} & 3.12E+02 & 18001.15 (0.227) \\
 & BOOOM-Parallel & 1.57E+02 & 6.62E+01 & 1350.89 (433.402) & \textbf{5.55E+02} & 1.26E+02 & 3541.81 (58.475) & \textbf{1.88E+03} & 1.45E+02 & 3650.86 (8.098) & \textbf{5.09E+03} & 2.51E+02 & 11401.89 (269.719) \\
 & Active-Set (fmincon) & 1.57E+02 & 1.21E+02 & 27.63 (1.444) & 3.50E+03 & 1.24E+02 & 106.08 (4.899) & — & — & — & — & — & — \\
 & Interior-Point (fmincon) & 1.59E+02 & 1.11E+02 & 12.46 (0.619) & 3.30E+03 & 1.39E+02 & 14.38 (0.696) & — & — & — & — & — & — \\
\multicolumn{1}{c|}{Rosenbrock} & SQP (fmincon) & \textbf{1.12E+02} & 1.12E+02 & 18.97 (0.990) & 4.59E+03 & 1.23E+02 & 82.67 (1.897) & — & — & — & — & — & — \\
 & Riemannian Barzilai–Borwein (RBB) & \textbf{4.00E+00} & 6.10E+01 & 622.87 (31.153) & \textbf{2.72E+02} & 3.38E+01 & 3072.75 (27.941) & \textbf{2.72E+02} & 5.78E+01 & 18010.57 (1.739) & 3.99E+04 & 1.21E+02 & 18218.73 (30.518) \\
 & Riemannian Conjugate Gradient (RCG) & 1.57E+02 & 1.04E+02 & 65.49 (4.727) & 1.38E+03 & 8.20E+01 & 352.17 (48.601) & 4.20E+03 & 1.47E+02 & 2417.64 (109.888) & 8.71E+03 & 2.03E+02 & 18127.00 (22.580) \\
 & Riemannian Gradient Descent (RGD) & 1.57E+02 & 9.89E+01 & 174.94 (18.280) & 1.38E+03 & 7.41E+01 & 1312.91 (291.009) & 4.23E+03 & 1.26E+02 & 6455.01 (813.411) & 9.24E+03 & 1.91E+02 & 18139.75 (28.323) \\
 & Riemannian Trust-Region (RTR) & 1.57E+02 & 9.97E+01 & 2922.01 (269.490) & 1.38E+03 & 7.62E+01 & 3602.58 (0.669) & 4.14E+03 & 1.42E+02 & 18018.85 (3.741) & 9.21E+03 & 1.77E+02 & 18593.46 (146.960) \\ \hline
 & BOOOM & \textbf{8.18E+02} & 1.93E+01 & 3.13 (0.245) & \textbf{2.48E+03} & 3.11E+01 & 1529.40 (261.354) & \textbf{1.05E+04} & 4.80E+01 & 18000.19 (0.029) & 4.13E+04 & 2.35E+02 & 18001.72 (0.342) \\
 & BOOOM-Parallel & \textbf{8.18E+02} & 1.93E+01 & 162.24 (18.226) & \textbf{2.48E+03} & 3.13E+01 & 3259.25 (283.010) & \textbf{1.05E+04} & 4.80E+01 & 6202.70 (140.371) & \textbf{3.11E+04} & 9.53E+01 & 18000.10 (0.020) \\
 & Active-Set (fmincon) & 2.19E+03 & 6.14E+01 & 31.35 (1.612) & 5.29E+03 & 6.28E+01 & 102.59 (5.366) & — & — & — & — & — & — \\
 & Interior-Point (fmincon) & 1.51E+03 & 1.16E+02 & 14.11 (0.713) & 5.33E+03 & 6.38E+01 & 13.59 (0.751) & — & — & — & — & — & — \\
\multicolumn{1}{c|}{Rastrigin} & SQP (fmincon) & 1.45E+03 & 1.03E+02 & 22.47 (1.142) & 5.40E+03 & 6.95E+01 & 87.97 (2.391) & — & — & — & — & — & —  \\
 & Riemannian Barzilai–Borwein (RBB) & \textbf{8.79E+02} & 5.93E+01 & 687.90 (3.321) & \textbf{2.16E+03} & 9.11E+01 & 2674.90 (35.362) & \textbf{1.05E+04} & 1.94E+02 & 17939.73 (36.979) & 1.03E+05 & 2.03E+03 & 18135.56 (23.731) \\
 & Riemannian Conjugate Gradient (RCG) & 2.22E+03 & 5.78E+01 & 45.42 (2.185) & 4.73E+03 & 5.94E+01 & 394.49 (27.104) & 1.45E+04 & 6.25E+01 & 5222.35 (327.458) & \textbf{3.87E+04} & 1.17E+02 & 18208.91 (43.448) \\
 & Riemannian Gradient Descent (RGD) & 2.02E+03 & 5.98E+01 & 103.34 (20.990) & 4.85E+03 & 5.21E+01 & 712.37 (156.209) & 1.46E+04 & 5.49E+01 & 9400.78 (923.116) & 3.98E+04 & 9.11E+01 & 18311.54 (35.113) \\
 & Riemannian Trust-Region (RTR) & 2.17E+03 & 5.82E+01 & 1362.76 (6.899) & 4.77E+03 & 5.52E+01 & 3602.85 (0.660) & 1.45E+04 & 5.88E+01 & 18018.23 (3.778) & 3.99E+04 & 4.58E+01 & 18259.58 (66.729) \\ \hline
\end{tabular}%
}
\caption{Performance comparison on the modified Ackley, Griewank, Rosenbrock, and Rastrigin functions over the Stiefel manifold $\mathrm{St}(p,p)$ for $p \in \{10,20,50,100\}$. For each method and dimension, we report the best objective value over 10 independent runs, the standard error of the final objective values, and the mean runtime in seconds with standard error in parentheses. BOOOM and BOOOM-Parallel are compared with MATLAB \texttt{fmincon} solvers (Active-set, Interior-point, and SQP) and Riemannian optimization methods from \texttt{Manopt} (RBB, RCG, RGD, and RTR). All methods use the same random orthogonal initializations within each setting. The \texttt{fmincon} solvers are omitted for $p=50,100$ because they often stalled for extended periods in higher dimensions. The two smallest objective values in each setting are highlighted in bold.}
\label{tab:benchmark_study}
\end{sidewaystable}

\begin{figure}[!htbp]
	\centering
	\includegraphics[width=0.95\textwidth]{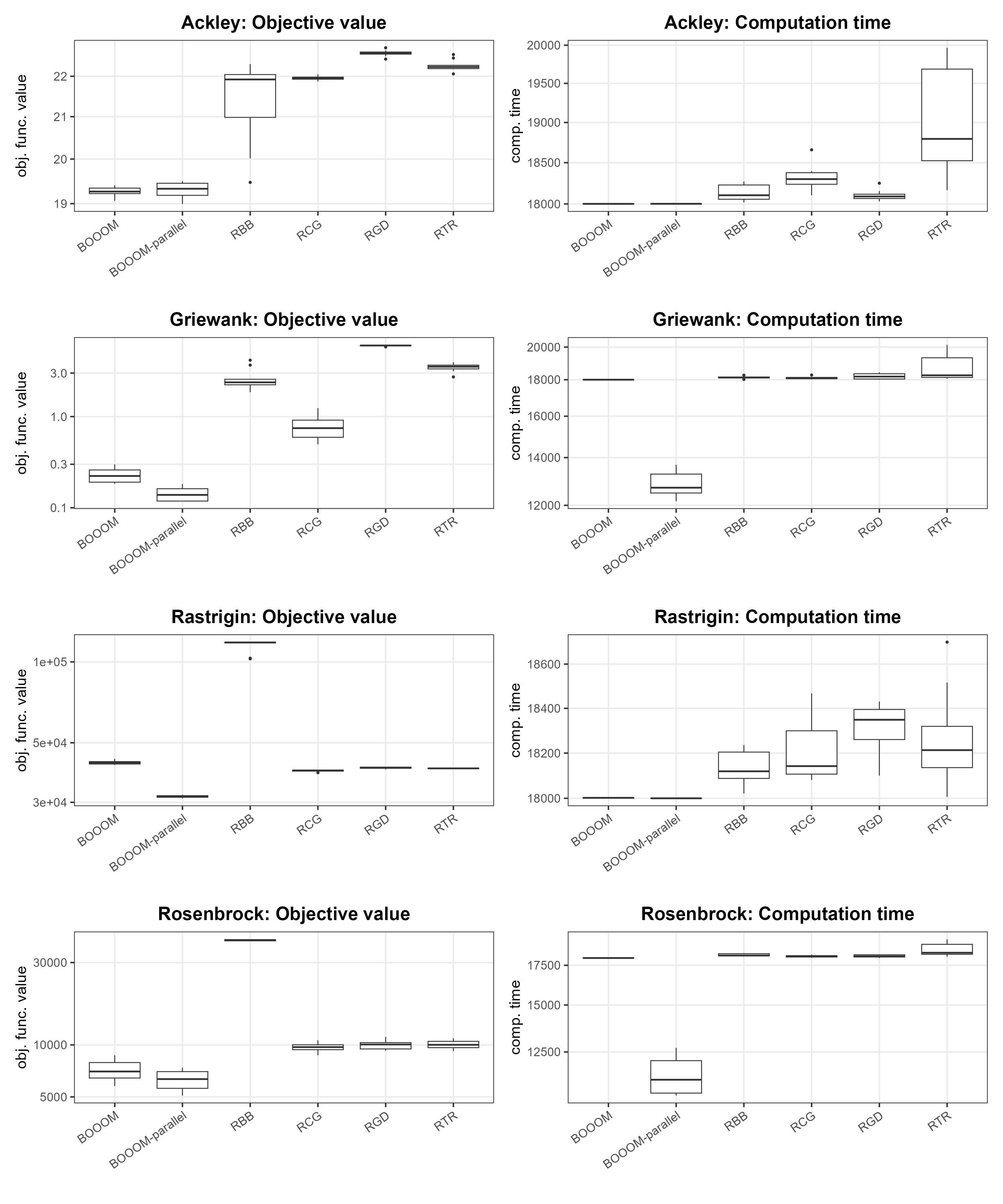}
	\caption{Comparison of BOOOM, BOOOM-parallel, and four Riemannian optimization baselines: Riemannian Barzilai–Borwein (RBB), Riemannian Conjugate Gradient (RCG), Riemannian Gradient Descent (RGD). and Riemannian Trust Region (RTR) on the modified Ackley, Griewank, Rastrigin, and Rosenbrock functions in dimension $p=100$. Left panels show objective values; right panels show corresponding computation times across 10 runs. Each run was terminated after a maximum of 5 hours.}
	\label{fig:Benchmark_boxplots}
\end{figure}

Table~\ref{tab:benchmark_study} reports performance across four problem sizes $p \in \{10, 20, 50, 100\}$. For each configuration, all methods are initialized from the same set of randomly generated orthogonal matrices and repeated over 10 independent runs. A maximum runtime of one hour for $p=10,20$ and five hours for $p=50,100$ is imposed for BOOOM and the Riemannian optimization routines. Such time limits cannot be enforced for the \texttt{fmincon} solvers; moreover, in higher-dimensional settings ($p=50,100$), these methods frequently stalled for extended periods and are therefore excluded from those scenarios. Table~\ref{tab:benchmark_study} indicates that BOOOM is consistently among the strongest performers across all four benchmark families and all problem dimensions. For Ackley, BOOOM and BOOOM-Parallel achieve the best objective values throughout, outperforming the \texttt{fmincon} solvers in the small-dimensional cases and generally improving upon the Riemannian baselines in the larger problems. For Griewank, several baselines remain competitive at $p=10$ and $p=20$, but BOOOM-Parallel gives the best result at $p=100$ while requiring substantially less time than methods that exhaust the five-hour budget.
The advantage of BOOOM is more evident on the more challenging Rosenbrock and Rastrigin functions. On Rosenbrock, BOOOM and BOOOM-Parallel attain the best values from $p=20$ onward, with BOOOM-Parallel giving the strongest result at $p=100$. On Rastrigin, BOOOM and BOOOM-Parallel dominate at $p=10,20,50$, and BOOOM-Parallel again attains the best value at $p=100$. Overall, these results show that BOOOM is robust across diverse non-convex landscapes, while BOOOM-Parallel is particularly effective in higher dimensions, where it frequently achieves better solutions under the same runtime budget or comparable solutions in markedly less time. Among the competing baselines, the Riemannian methods, specifically Riemannian Barzilai–Borwein (RBB), remains the most competitive alternative.

Figure~\ref{fig:Benchmark_boxplots} further illustrates the comparative behavior of the methods at $p=100$ in terms of both solution quality and computational cost. BOOOM and BOOOM-Parallel consistently achieve lower objective values with substantially reduced variability across runs relative to the Riemannian baselines, particularly for the Ackley, Rosenbrock, and Rastrigin functions. Among the competing methods, RBB and RCG remain the strongest alternatives, but exhibit noticeably higher variability and are more prone to suboptimal local minima, especially on the highly multimodal Rastrigin landscape. In terms of computational efficiency, BOOOM-Parallel demonstrates a clear advantage in several settings, notably for the Griewank and Rosenbrock functions, where it attains competitive or superior objective values in substantially less time than methods that typically run until the imposed time limit. In contrast, most Riemannian methods, particularly RTR, frequently exhaust the full computational budget without corresponding gains in solution quality. Overall, the boxplots reinforce that BOOOM provides stable and high-quality solutions, while its parallel implementation is particularly beneficial in high-dimensional regimes.

\section{Benchmark Experiments}\label{sec:bench_exps}
We evaluate BOOOM through a comprehensive suite of benchmark experiments designed to assess its performance across a wide spectrum of orthogonality-constrained optimization problems. Rather than focusing on a single application domain, we consider six representative problem classes spanning signal processing, matrix decomposition, blind source separation, statistical learning, and electronic structure computation. Table~\ref{tab:simulation_overview} summarizes the benchmark settings, highlighting their objective functions, sources of non-convexity, and competing baseline methods. These problems exhibit diverse structural challenges, including multi-modal landscapes, rotational invariances, and coupled low-rank and sparse structures, thereby providing a rigorous testbed for evaluating BOOOM as a general-purpose black-box optimizer on the Stiefel manifold.

\begin{table}[h]
\centering
\footnotesize
\begin{tabular}{p{3.0cm} p{4.2cm} p{3.9cm} p{2.5cm}}
\toprule
Problem & Objective & Nonconvex structure & Baselines \\
\midrule

Heterogeneous quadratic maximization (Section \ref{subsec:HQF}) &
$\sum_{i=1}^{d} q_i^\top M_i q_i$ &
Heterogeneous quadratic terms producing multiple local optima &
SDP relaxation \\

\addlinespace[4pt]

Low-rank + sparse decomposition (Section \ref{subsec:LRSD}) &
$\|XQQ^\top\|_* + \lambda\|X(I-QQ^\top)\|_1$ &
Structured nonconvexity arising from simultaneous low-rank and sparse components &
AccAltProj, GoDec+, LRSD-TNNSR \\

\addlinespace[4pt]

Independent component analysis (Section \ref{subsec:ICA}) &
Log-cosh contrast maximization under orthogonality constraints &
Non-Gaussian likelihood surface with scale and sign ambiguity &
FastICA, Infomax, Picard \\

\addlinespace[4pt]

Varimax factor rotation (Section \ref{subsec:Varimax}) &
Fourth-moment dispersion criterion applied to rotated loadings &
Multiple rotation-equivalent local optima due to orthogonal rotations &
MATLAB \texttt{rotatefactors} \\

\addlinespace[4pt]

Orthogonal joint diagonalization (Section \ref{subsec:OJD}) &
$\sum_k \|\mathrm{offdiag}(W^\top C_k W)\|_F^2$ &
Approximate simultaneous diagonalization of multiple matrices &
Jacobi AJD, Riemannian GD, Riemannian TR \\

\addlinespace[4pt]

Reduced Kohn--Sham Rayleigh--Ritz (Section \ref{subsec:KS}) &
$\mathrm{tr}(Q^\top H_{\mathrm{red}}Q)$ &
Eigenvalue optimization with orthogonality constraints &
Riemannian CG \\

\bottomrule
\end{tabular}
\caption{Overview of simulation benchmarks used to evaluate BOOOM.
All problems are formulated as optimization tasks on the Stiefel manifold.
Objective functions and baseline methods are summarized here using acronyms 
for brevity; full descriptions are provided in the corresponding subsections.}
\label{tab:simulation_overview}
\end{table}

\subsection{Maximization of Sum of Heterogeneous Quadratic Forms}\label{subsec:HQF}
The maximization of sums of heterogeneous quadratic forms under the Stiefel manifold arises in signal processing, which deal with multi-dataset subspace estimation problems. Heteroscedastic probabilistic PCA technique (HePPCAT) incorporates heteroscedastic noise and likelihood-based estimation to address low-rank approximation \citep{hong2021heppcat}. However, the maximization of heterogeneous quadratic forms is non-convex and non-trivial, and traditional gradient-based optimization methods do not guarantee global optimality.

Recently, \citet{gilman2025semidefinite} developed a semidefinite programming (SDP) relaxation to solve the original problem of maximizing sums of heterogeneous quadratic forms under Stiefel manifold constraints. The SDP relaxation convexifies the original non-convex objective and provides an efficiently computable upper bound on the true optimum. It enables global analysis through convex optimization when each optimal $X_i=q_iq_i^ \top$ is rank-one where $q_i$ is $i_{th}$ orthornormal basis vector in equation \eqref{eqn:Heterogeneous_Quadratic}. We evaluate the performance of BOOOM and SDP method on the maximization problem
\begin{align}
\max_{Q\in \text{St}(p,d)}\sum_{i=1}^{d} \textbf{q}_i^\top\textbf{M}_i\textbf{q}_i 
\label{eqn:Heterogeneous_Quadratic}
\end{align} 
where $\textbf{M}_1,\ldots,\textbf{M}_d \succeq 0 \text{ for } d<p,$ $\textbf{Q}=[\textbf{q}_1 \cdots \textbf{q}_d] \in \ \mathbb{R} ^{p\times d}$, $\text{St}(p,d) = \{\textbf{Q} \in \mathbb{R} ^{p\times d} : \textbf{Q}^\top\textbf{Q}= \textbf{I}_d  \}$.

To assess performance under different structure conditions on the matrices $M_i$,  we generate sets of $d$ positive semidefinite matrices following three patterns:
\begin{enumerate}[nosep]
\item Random pattern: Each $M_i$ is generated as $A_i^\top A_i$ where $A_i$ has iid Gaussian entries. This represents unstructured and indepently diagonalizable matrices. 
\item Toeplitz pattern: Each $M_i$ has entries $m_{jk} = \rho_i^{\left| j-k\right|}$ where $\rho_i\in(0,1)$. This pattern produces matrices that are close-to-jointly diagonalizable.
\item Block diagonal pattern: Each $M_i$ consists of 5 diagonal blocks with random within-block correlation matrices. This pattern represents matrices that are structurally jointly block diagonalizable.
\end{enumerate}
\begin{figure}[!h]
	\centering
	\includegraphics[width=\textwidth]{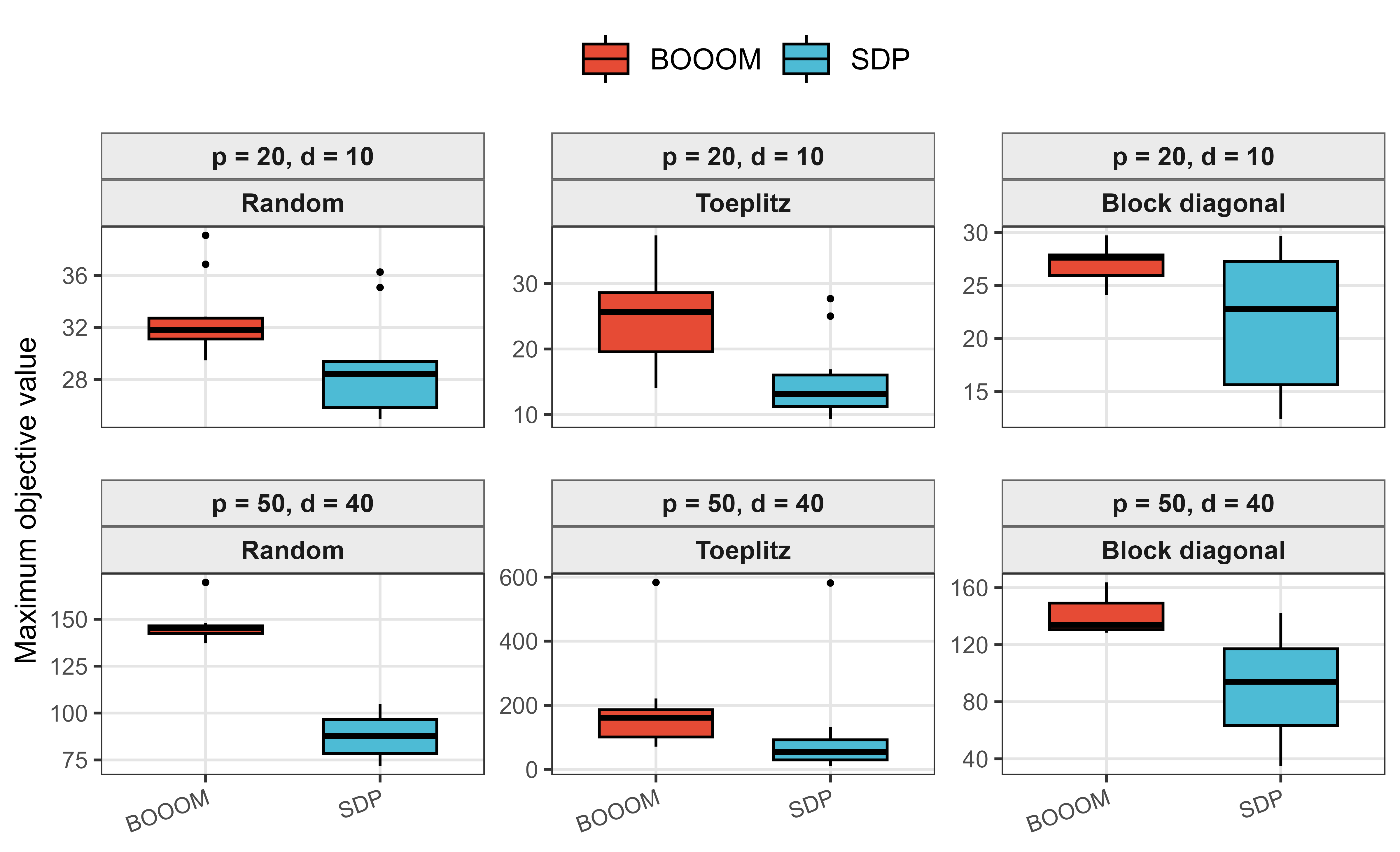}
	\caption{Maximization of heterogeneous quadratic forms: Maximum value comparison between BOOOM and SDP for Random, Toeplitz, Block diagonal pattern on $M_i$. Upper panel: size of $M_i \text{ is } (20 \times 20)$ and size of $Q \text{ is } (20 \times 10)$. Lower panel: size of $M_i \text{ is }(50 \times 50)$ and size of $Q \text{ is } (50 \times 40)$. Each boxplot summarizes $10$ Monte Carlo replicates.}
	\label{fig:HQFmaximization}
\end{figure}
We consider two problems of dimension of $Q_{p\times d}$ with $(p, d) = (20,10), \; (50,40)$. For each setting, we conduct 10 experiments and compute the maximum value of the objective function. Figure~\ref{fig:HQFmaximization} compares the maximum objective values achieved by BOOOM and the SDP-based approach under three structural patterns for $M_i$ (Random, Toeplitz, and Block diagonal) and two problem sizes (top: $(p, d) = (20, 10)$; bottom: $(p, d) = (50, 40)$). Across all matrix structures and dimensions, BOOOM consistently attains higher objective values than the SDP-based method. In the lower-dimensional setting (top panel), BOOOM achieves higher median objective values across all three patterns. In the higher-dimensional setting (bottom panel), the advantage of BOOOM becomes more pronounced, particularly under the Random and Block diagonal structures, where the gap in central tendency between BOOOM and SDP is substantially larger than in the smaller-dimensional case. Overall, BOOOM not only improves the attainable maximum objective values but also demonstrates strong robustness, especially under the Random and Block diagonal patterns. Although variability is higher under the Toeplitz structure, BOOOM maintains superior overall performance across replicates. The numerical summaries corresponding to these boxplots are provided in Appendix Table~\ref{tab:HQFmaximization}.

\subsection{Low rank and Sparse Matrix Decomposition}\label{subsec:LRSD}
Decomposing a matrix into low-rank and sparse components aims to separate dominant low-dimensional structure from sparse, large-magnitude deviations. This paradigm has been widely used in applications such as video surveillance, face recognition, and latent semantic indexing \citep{candes2011robust}. A classical formulation is Robust Principal Component Analysis (RPCA), given by
\begin{equation}
    \min \ \mathrm{rank}(L) + \lambda \|S\|_0, \quad \text{such that } X = L + S,
    \label{eqn:robust-pca-main}
\end{equation}
which is non-convex and NP-hard \citep{hu2020moving}. A widely used convex relaxation is Principal Component Pursuit (PCP),
\begin{equation}
    \min \ \|L\|_* + \lambda \|S\|_1, \quad \text{such that } X = L + S,
    \label{eqn:robust-pca-approx}
\end{equation}
where the nuclear norm and $\ell_1$-norm serve as convex surrogates for rank and sparsity, respectively \citep{candes2011robust}. Numerous algorithms have been developed to efficiently solve these formulations. For example, Accelerated Alternating Projections (AccAltProj) \citep{cai2019accelerated} directly targets the non-convex RPCA problem using alternating projections with provable recovery guarantees. GoDec+ \citep{guo2017godec+} combines randomized low-rank approximation with a maximum correntropy criterion to enhance robustness under heavy-tailed noise. Low-rank and sparse matrix decomposition via the truncated nuclear norm and a sparse regularizer (a.k.a. LRSD-RNNSR; \citealp[]{xue2019low}) employs truncated nuclear norm regularization alongside sparse penalties to better approximate rank while promoting sparsity.

To connect low-rank structure with orthogonality constraints, we consider a factorized representation of the low-rank component via an orthonormal basis $Q \in \mathrm{St}(p,d)$, where $QQ^\top$ acts as a projection onto a $d$-dimensional subspace. Under this parameterization, the low-rank component can be expressed as $XQQ^\top$, while the residual $X(I - QQ^\top)$ captures sparse deviations. This leads to the following optimization problem:
\begin{equation}
    \min_{Q \in \mathrm{St}(p,d)} \ \|XQQ^\top\|_* + \lambda \|X(I - QQ^\top)\|_1,
    \label{eqn:robust-pca-obj}
\end{equation}
where $X \in \mathbb{R}^{n \times p}$ is the data matrix, $\|\cdot\|_*$ denotes the nuclear norm, and $\|\cdot\|_1$ is the element-wise $\ell_1$-norm.

We compare the matrix decomposition performance of BOOOM applied to the reparameterized PCP objective \eqref{eqn:robust-pca-obj} against AccAltProj, GoDec+, and LRSD-TNNSR. For each method, performance is evaluated using the mean absolute error (MAE) between the true low-rank matrix $L$ and its estimate $\widehat{L}$ across varying data dimensions. The synthetic data matrix $X$ is generated as follows:

\noindent
\begin{enumerate}[nosep]
\item Generate $A \in \mathbb{R}^{n \times p}$ with $A_{ij} \overset{\text{iid}}{\sim} N(0,1)$ and compute its singular value decomposition $A = U_A D_A V_A^\top$.
\item Construct a rank-$d$ low-rank matrix $L = U_A[:,1\!:\!d] \, D_L \, V_A[:,1\!:\!d]^\top$, where 
\[
D_L = \mathrm{diag}\!\left(2, \, 1 + \frac{d-2}{d-1}, \, \dots, \, 1 + \frac{1}{d-1}\right).
\]
\item Generate a sparse matrix $S$ with entries $S_{ij} = M_{ij} B_{ij}$, where $M_{ij} \sim \mathrm{Cauchy}(0,1)$ and $B_{ij} \sim \mathrm{Bernoulli}(0.2)$.
\item Set $X = L + S$.
\end{enumerate}

\begin{figure}[!h]
	\centering
    \includegraphics[width=\textwidth]{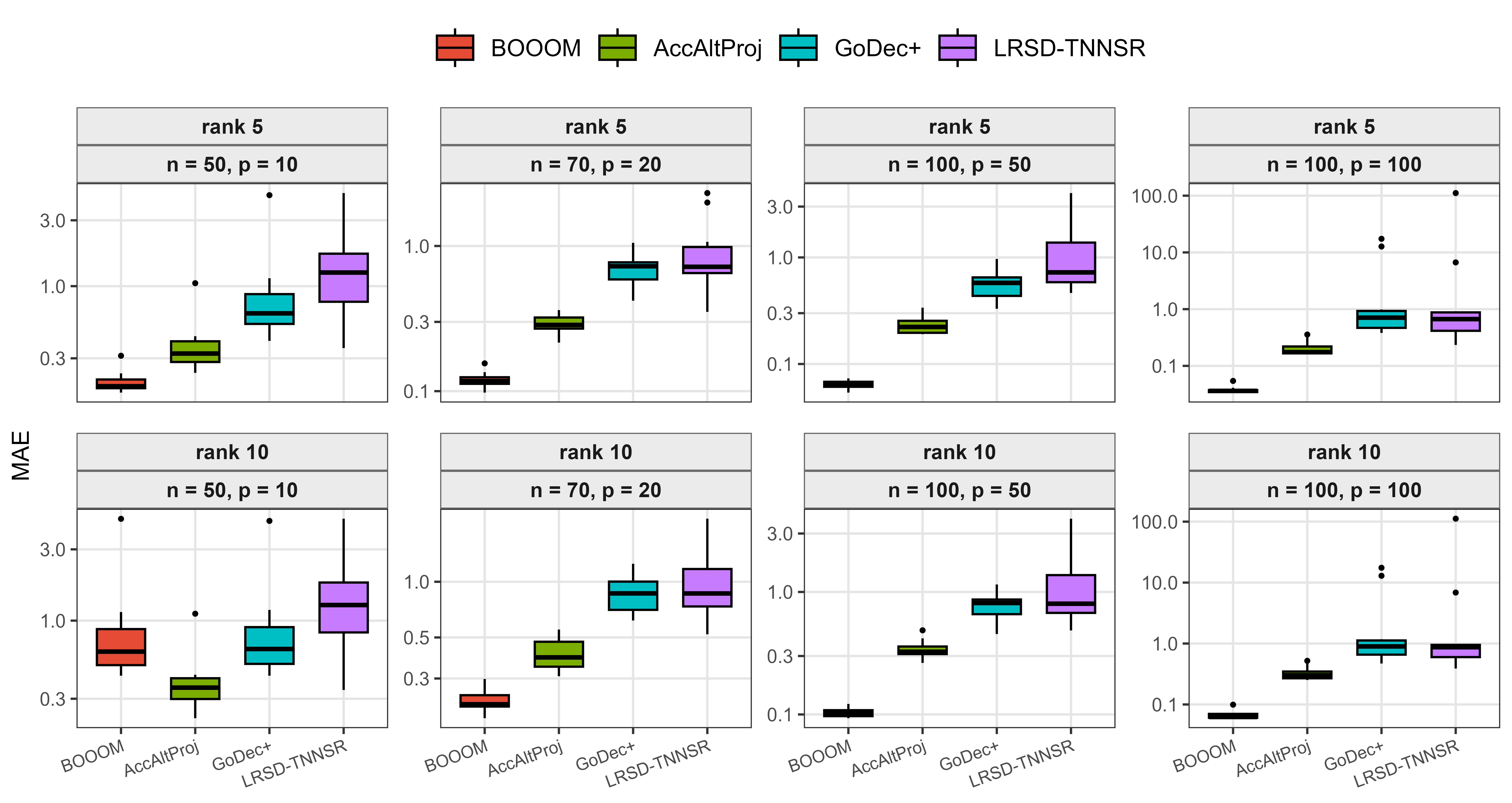}
	\caption{Low rank and Sparse Matrix Decomposition: Mean absolute error (MAE) comparison on low rank reconstruction $\hat{L}$ and true low rank matrix $L$ under synthetic data $X$ size is $(n\times p)$.  Upper panel: rank $d=5$ on $L$. Lower panel: rank $d=10$ on $L$. Each boxplot summarizes $10$ Monte Carlo replicates.}
	\label{fig: low-rank decomposition}
\end{figure}

We consider matrix dimensions $(n,p) \in \{(50,10), (70,20), (100,50), (100,100)\}$ with intrinsic rank $d \in \{5,10\}$. For the proposed formulation \eqref{eqn:robust-pca-obj}, the estimated low-rank component is given by $\widehat{L} = XQQ^\top$. Competing methods produce $(\widehat{L}, \widehat{S})$ via their respective procedures. Unless otherwise specified, competing methods are implemented using parameter settings recommended in their original references for synthetic data experiments. For \eqref{eqn:robust-pca-obj}, the regularization parameter is set to $\lambda = 1/\sqrt{\max(n,p)}$, following \citep{candes2011robust}. Each experiment is repeated over 10 independent replicates.

Figure~\ref{fig: low-rank decomposition} compares the MAE of BOOOM, AccAltProj, GoDec+, and LRSD-TNNSR across all simulated settings. BOOOM attains the lowest reconstruction error in the vast majority of scenarios, with particularly strong performance in the rank-$5$ setting across all matrix dimensions $(n,p)$. In the higher-rank setting ($d=10$), AccAltProj performs competitively for smaller matrices $(50,10)$, but its accuracy deteriorates as the problem size increases, whereas BOOOM maintains consistently low error. In addition, BOOOM exhibits reduced variability across replicates, as evidenced by the tighter interquartile ranges in the boxplots, indicating stable performance. Overall, these results demonstrate that BOOOM achieves accurate low-rank recovery while remaining robust to increases in data dimension and intrinsic rank. Numerical summaries corresponding to these boxplots are reported in Appendix Table~\ref{tab:Low_rank_decomposition}.

\subsection{Independent Component Analysis}\label{subsec:ICA}
Independent Component Analysis (ICA) aims to recover statistically independent latent sources from their linear mixtures. ICA has become a fundamental technique in signal processing, neuroscience, and blind source separation \citep{hyvarinen2000independent,cardoso1998blind}. Given an observed data matrix $X \in \mathbb{R}^{p\times n}$ generated from the mixing model
\begin{equation}
X = A S,
\end{equation}
where $A \in \mathbb{R}^{p\times p}$ is an unknown mixing matrix and $S \in \mathbb{R}^{p\times n}$ contains mutually independent source signals, the goal of ICA is to estimate an unmixing matrix $W$ such that the recovered signals $\hat{S}=WX$ approximate the true independent components. Many ICA algorithms exploit non-Gaussianity of the latent sources. A widely used contrast function is based on the log-cosh nonlinearity, which leads to the objective
\begin{equation}
\max_{W \in \mathrm{St}(p,p)} 
\frac{1}{n}\sum_{t=1}^{n} \sum_{i=1}^{p} 
\log \cosh\!\left(a_1\, w_i^\top x_t \right),
\label{eqn:ica_obj}
\end{equation}
where $w_i$ denotes the $i$th row of $W$, $x_t$ is the $t$-th observation vector, and $a_1>0$ controls the slope of the log-cosh nonlinearity; in our experiments we set $a_1=1$. After whitening the observations, the unmixing matrix is constrained to be orthogonal, so the optimization variable lies on the Stiefel manifold $W \in \mathrm{St}(p,p)$. 

Several algorithms have been developed to solve the ICA problem efficiently. FastICA uses a fixed-point iteration based on maximizing non-Gaussianity and has become one of the most widely used ICA algorithms \citep{hyvarinen1999fast}. Infomax ICA performs maximum likelihood estimation via a natural gradient approach \citep{bell1995information}. Picard accelerates ICA estimation by combining quasi-Newton updates with preconditioning to improve convergence speed \citep{ablin2018faster}. We compare BOOOM with these established ICA methods by optimizing the log-cosh objective \eqref{eqn:ica_obj}. The competing algorithms include FastICA, Infomax ICA (runica), and Picard. It is important to note that these algorithms are not explicitly designed to optimize the log-cosh objective \eqref{eqn:ica_obj} under an orthogonality constraint; rather, they are derived from different contrast functions or likelihood formulations tailored to ICA. To enable a unified comparison, we evaluate all methods using the same log-cosh objective \eqref{eqn:ica_obj} as an external criterion. For a fair comparison under the orthogonality constraint $W \in \mathrm{St}(p,p)$, we post-process each estimated unmixing matrix by projecting it onto the nearest orthogonal matrix (via an SVD/Procrustes step) before computing the log-cosh objective and the performance index Amari distance \citep{Amari1995}.

\begin{figure}[h]
	\centering
    \includegraphics[width=\textwidth]{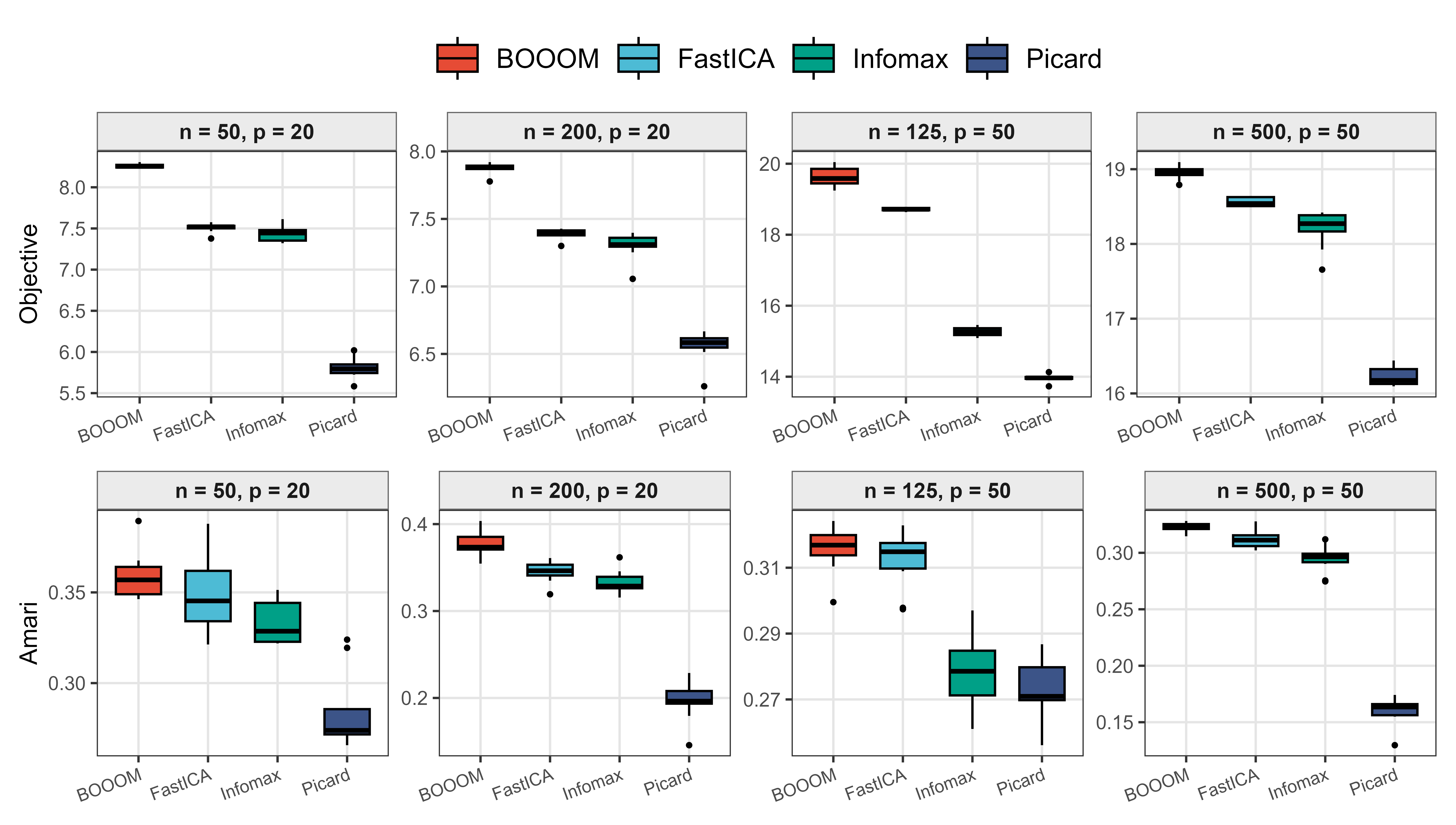}
	\caption{Independent Component Analysis: Performance comparison across four problem dimensions. Columns correspond to $(p,n) \in {(20,50), (20,200), (50,125), (50,500)}$. The top row shows the achieved values of the log-cosh objective \eqref{eqn:ica_obj} (higher is better), while the bottom row reports the Amari distance measuring source recovery accuracy (lower is better). Each boxplot summarizes $10$ Monte Carlo replicates.}
	\label{fig:ica_results}
\end{figure}

Synthetic data are generated according to the ICA model $X = AS$. First, latent sources $S \in \mathbb{R}^{p\times n}$ are generated with mutually independent components to ensure non-Gaussianity. For each component $i=1,\dots,p$, the source distribution cycles through three types: Laplace (super-Gaussian), Student-$t$ with three degrees of freedom, and Uniform on $[-\sqrt{3},\sqrt{3}]$ (sub-Gaussian). The generated sources are standardized to have zero mean and unit variance. Next, the mixing matrix $A \in \mathbb{R}^{p\times p}$ is constructed as $A = U \,\mathrm{diag}(s_1,\dots,s_p)\, V^\top$, where $U$ and $V$ are random orthogonal matrices obtained from QR decompositions of Gaussian matrices and the singular values $(s_1,\dots,s_p)$ follow a logarithmic spacing $s_i \in [1,10]$ to produce a moderately conditioned mixing system. The observed data are then formed as $X = AS$. Prior to ICA estimation, the observations are centered and whitened using the eigenvalue decomposition of the sample covariance matrix so that the transformed data have identity covariance. Under this whitening step, the unmixing matrix is orthogonal and the optimization variable lies on the Stiefel manifold $W \in \mathrm{St}(p,p)$.

We consider four dimensional configurations $(p,n) \in 
\{(20,50), (20,200), (50,125), (50,500)\}$. For each setting, $10$ Monte Carlo replicates are generated. Two performance metrics are evaluated. Figure~\ref{fig:ica_results} summarizes the results. Each column corresponds to one $(p,n)$ configuration. The top row displays the achieved objective values in \eqref{eqn:ica_obj}, while the bottom row reports the Amari distances across Monte Carlo replicates. It is important to emphasize that BOOOM directly optimizes the log-cosh objective in \eqref{eqn:ica_obj} under the orthogonality constraint, whereas the competing ICA methods are derived from different contrast functions or likelihood formulations and are not explicitly designed to optimize this objective. In particular, FastICA, Infomax (runica), and Picard employ their own estimation mechanisms, and the resulting unmixing matrices are not guaranteed to be orthogonal. Therefore, for a fair comparison, we project each estimated matrix onto the nearest orthogonal matrix on $\mathrm{St}(p,p)$ before evaluating the objective value and Amari distance. Across all scenarios, BOOOM consistently attains the highest objective values, reflecting its ability to directly optimize \eqref{eqn:ica_obj}. In contrast, Picard typically achieves the lowest Amari distances, followed by Infomax and FastICA, indicating more accurate source recovery under this metric. This discrepancy suggests that, for the considered simulation settings, maximization of the log-cosh objective and minimization of the Amari distance are not perfectly aligned. Overall, these results highlight that BOOOM successfully fulfills its primary goal of maximizing the target objective on the Stiefel manifold, while its Amari performance, although competitive, remains comparatively suboptimal relative to specialized ICA solvers that are tailored for source separation accuracy. The numerical summaries corresponding to these boxplots are provided in Appendix Table~\ref{tab:ica_results}.

\subsection{Varimax Factor Rotation}\label{subsec:Varimax}
Orthogonal factor rotation is a classical problem in exploratory factor analysis that seeks a rotated loading matrix with a simple and interpretable structure. Among orthogonal rotation criteria, the Varimax criterion is one of the most widely used methods for achieving sparse and interpretable factor loadings \citep{kairser1958varimax}. Given a loading matrix $A \in \mathbb{R}^{n\times p}$ obtained from an initial factor extraction procedure, the goal is to find an orthogonal rotation matrix $R$ such that the rotated loadings $B = AR$ exhibit a simple structure. The Varimax rotation maximizes the dispersion of squared loadings within each factor, encouraging each variable to load strongly on only a few factors. However, the resulting optimization problem is non-convex under the orthogonality constraint and may admit multiple local optima, making reliable optimization challenging. The corresponding objective function is

\begin{equation}
V(R) = \sum_{j=1}^{p} \left[
\frac{1}{n}\sum_{k=1}^{n} B_{kj}^{4}
-
\left(\frac{1}{n}\sum_{k=1}^{n} B_{kj}^{2}\right)^2
\right],
\qquad B = AR,
\label{eqn:varimax}
\end{equation}
subject to the orthogonality constraint $R \in \mathrm{St}(p,p)$. In our implementation, we minimize the negative Varimax objective $-V(R)$, thereby casting the problem as a minimization task on the Stiefel manifold.

We compare BOOOM with the classical Varimax rotation implemented in MATLAB's \texttt{rotatefactors} function, which computes the orthogonal rotation maximizing the Varimax criterion. The implementation follows standard iterative rotation procedures widely used in factor analysis \citep{jennrich2001simple}. Both methods estimate an orthogonal rotation matrix $R$ that maximizes the criterion in \eqref{eqn:varimax}. Synthetic loading matrices are generated to follow a simple structure. First, a true loading matrix $B_0 \in \mathbb{R}^{n\times p}$ is constructed such that each variable loads primarily on a single factor. For each row $k$, a dominant factor index $j$ is sampled uniformly from $\{1,\ldots,p\}$. The corresponding loading magnitude is drawn from $\text{Uniform}(0.8,1.2)$ with a random sign, i.e.,
$B_{0,kj} = s \cdot u$ where $u \sim \text{Uniform}(0.8,1.2)$ and $s = \mathrm{sign}(Z)$ with $Z \sim N(0,1)$. To produce a realistic loading structure, two additional factors are selected uniformly without replacement and assigned small Gaussian cross-loadings sampled from $N(0,0.05^2)$. All remaining entries are set to zero. The columns of $B_0$ are then normalized to unit $\ell_2$ norm to ensure comparable scaling across factors. Next, a random orthogonal rotation matrix $R_{\text{true}} \in \mathrm{St}(p,p)$ is generated via QR decomposition of a Gaussian matrix. The observed loading matrix is constructed as

\[
A = B_0 R_{\text{true}}^{\top}.
\]

Recovering the rotation matrix corresponds to maximizing the Varimax criterion applied to $A$. We consider eight dimensional configurations for the loading matrix $(n,p) =$ $(30,5)$, $(60,5)$, $(50,10)$, $(100,10)$, $(80,20)$, $(150,20)$, $(120,30)$, $(200,30)$. For each configuration, $10$ Monte Carlo replicates are conducted. 
\begin{figure}[h]
	\centering
    \includegraphics[width=\textwidth]{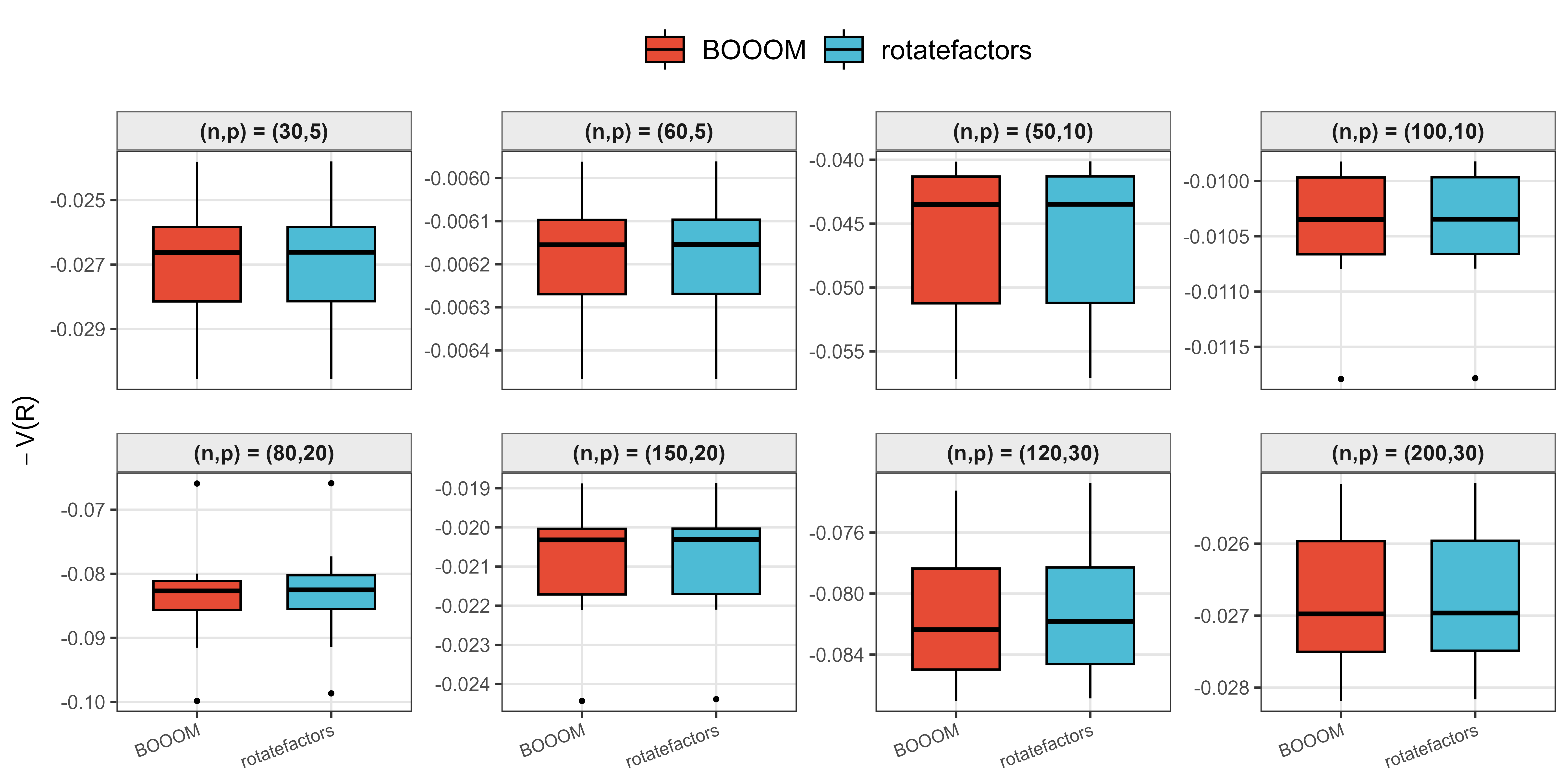}
	\caption{Varimax Factor Rotation: Comparison of optimization performance for the Varimax rotation problem across eight dimensional configurations $(n,p) \in \{(30,5),(60,5),(50,10),(100,10),(80,20),(150,20),(120,30),(200,30)\}$. The vertical axis reports the minimized objective $-V(R)$, where $V(R)$ is the classical Varimax criterion. Each boxplot summarizes $10$ Monte Carlo replicates for BOOOM and the MATLAB \texttt{rotatefactors} implementation.}
	\label{fig:varimax_results}
\end{figure}
The performance of each method is evaluated by the achieved value of the minimized objective $-V(R)$, along with computational time. The resulting objective values across Monte Carlo replicates are summarized in Figure~\ref{fig:varimax_results}. Across all dimensional configurations, BOOOM and the classical \texttt{rotatefactors} implementation achieve nearly identical objective values with strongly overlapping distributions. This behavior indicates that both methods are able to reliably optimize the Varimax criterion and recover rotations with comparable quality. The numerical summaries corresponding to these boxplots are provided in Appendix Table~\ref{tab:varimax_results}.

\subsection{Orthogonal Joint Diagonalization}\label{subsec:OJD}
Orthogonal joint diagonalization (AJD) is a fundamental problem arising in signal processing and blind source separation, where the objective is to find an orthogonal matrix that approximately diagonalizes a set of symmetric matrices simultaneously \citep{cardoso1996jacobi,pham2001joint}. This problem appears in applications such as independent component analysis, covariance estimation, and multi-set signal processing. However, the joint diagonalization objective is non-convex under orthogonality constraints, and optimization methods may converge to local optima. Let ${C_1,\ldots,C_m}$ be a collection of symmetric matrices in $\mathbb{R}^{p\times p}$. The goal is to find an orthogonal matrix $W$ such that each transformed matrix $W^\top C_k W$ becomes as diagonal as possible. A common formulation minimizes the sum of squared off-diagonal elements:
\begin{equation}
\min_{W \in \mathrm{St}(p,p)}
\sum_{k=1}^{m}
\left\| \operatorname{offdiag}\!\left(W^\top C_k W\right) \right\|_F^2
\label{eqn:AJD}
\end{equation}
where $\mathrm{offdiag}(\cdot)$ extracts the off-diagonal entries of a matrix and $|\cdot|_F$ denotes the Frobenius norm. The optimization is performed over the Stiefel manifold $\mathrm{St}(p,p)$ consisting of orthogonal matrices.

We compare BOOOM with three established approaches for orthogonal joint diagonalization: the classical Jacobi-based AJD algorithm \citep{cardoso1996jacobi}, Riemannian gradient descent on the Stiefel manifold, and the Riemannian trust-region method implemented in the Manopt toolbox \citep{boumal2014manopt}. Synthetic matrices are generated according to a standard joint diagonalization model. For each $k=1,\ldots,m$, the diagonal entries of $D_k$ are generated as
\[
d_k = \tilde d + 0.2\,\varepsilon_k,
\qquad
\tilde d_i = 0.5 + \frac{2.0-0.5}{p-1}(i-1), \quad i=1,\ldots,p,
\]
where $\varepsilon_k \sim N(0,I_p)$ and $D_k=\mathrm{diag}(d_k)$. The noiseless jointly diagonalizable matrices are then defined as
\[
C_k^{(0)} = W_{\text{true}} D_k W_{\text{true}}^\top .
\]
To create a realistic approximate joint diagonalization scenario, symmetric Gaussian noise is added:
\[
C_k = C_k^{(0)} + \sigma E_k ,
\]
where $E_k$ is a random symmetric matrix with entries sampled from a standard normal distribution and $\sigma = 0.1$ controls the noise level.

We consider four dimensional configurations defined by the matrix dimension $p$ and the number of matrices $m$:
$(p,m) \in \{(20,5), (20,10), (50,5), (50,10)\}$. For each configuration, $10$ Monte Carlo replicates are generated following the approximate joint diagonalization model described above. The performance of each method is evaluated using the achieved objective value in \eqref{eqn:AJD}. Figure~\ref{fig:AJD_results} summarizes the resulting objective values for the Jacobi AJD algorithm, Riemannian gradient descent, the Riemannian trust-region method, and BOOOM. Across all four configurations, BOOOM, Riemannian gradient descent, and the trust-region method achieve very similar objective values, with largely overlapping distributions across replicates, while BOOOM tends to obtain slightly lower values than Riemannian gradient descent in several settings. In contrast, the classical Jacobi-based AJD algorithm consistently attains noticeably larger objective values, indicating less effective minimization of the off-diagonal criterion. The results therefore suggest that BOOOM performs competitively with modern Riemannian optimization approaches while consistently improving upon the classical Jacobi AJD procedure. These findings highlight the robustness of BOOOM for non-convex optimization problems on the Stiefel manifold arising in approximate joint diagonalization. The numerical summaries corresponding to these boxplots are provided in Appendix Table~\ref{tab:AJD_results}.
\begin{figure}[h]
	\centering
    \includegraphics[width=0.7\textwidth]{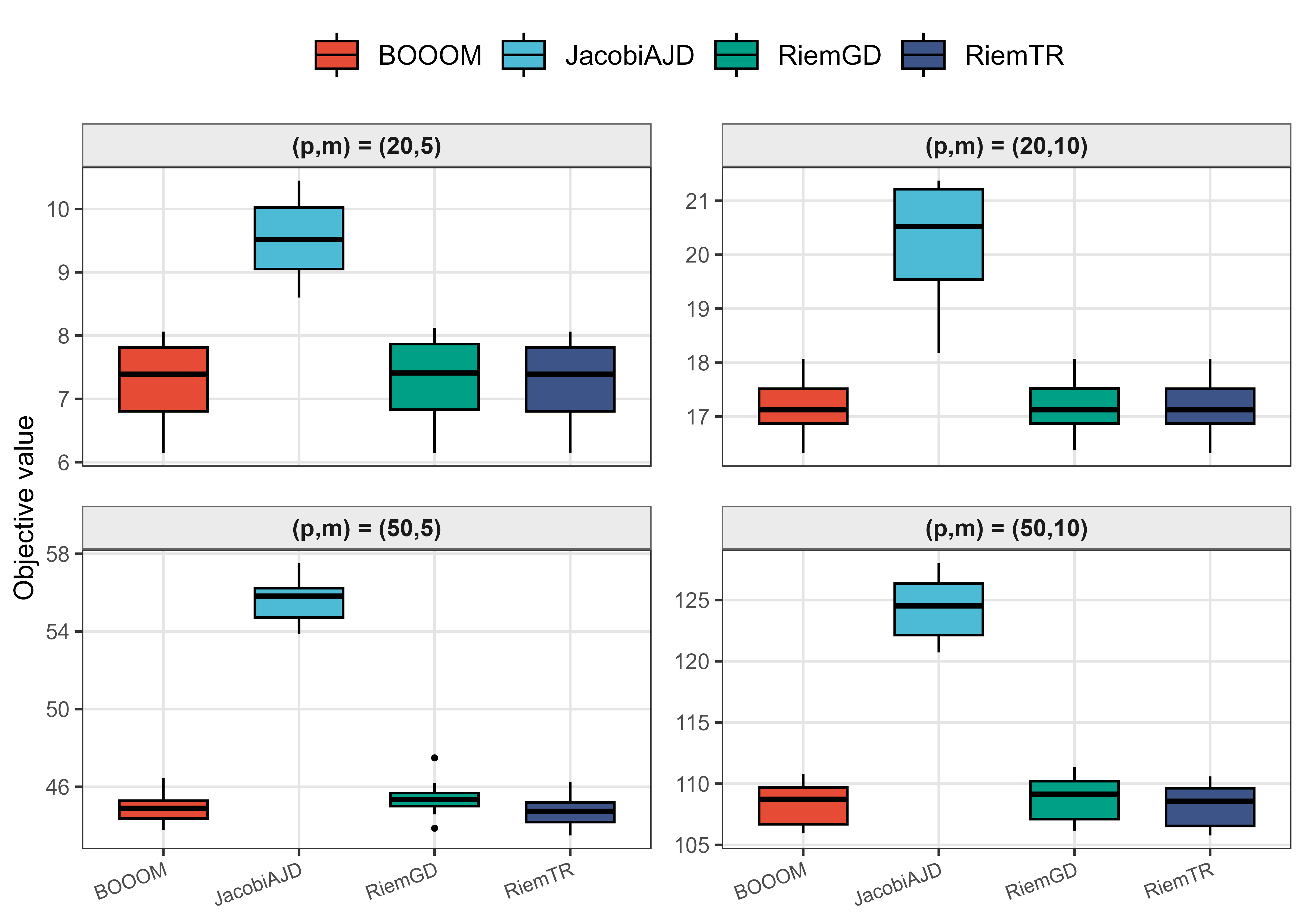}
	\caption{Orthogonal Joint Diagonalization: Comparison of optimization performance for the orthogonal joint diagonalization problem across four dimensional configurations $(p,m) \in \{(20,5),(20,10),(50,5),(50,10)\}$. The vertical axis reports the achieved objective value in \eqref{eqn:AJD}, corresponding to the sum of squared off-diagonal elements of the transformed matrices. Each boxplot summarizes $10$ Monte Carlo replicates for BOOOM, the Jacobi AJD algorithm, Riemannian gradient descent, and the Riemannian trust-region method.}
	\label{fig:AJD_results}
\end{figure}

\subsection{Reduced Kohn--Sham Rayleigh--Ritz Optimization}\label{subsec:KS}
Kohn--Sham density functional theory (DFT) is a fundamental computational framework for electronic structure calculations in quantum chemistry and condensed matter physics \citep{kohn1965self}. In practical implementations, solving the Kohn--Sham equations requires repeatedly computing the lowest eigenstates of a Hamiltonian operator. This task is typically performed using Rayleigh--Ritz optimization on orthogonality-constrained matrices \citep{goedecker1999linear,lin2012adaptive}. Consequently, efficient optimization on the Stiefel manifold is a key computational component in modern electronic structure solvers. Direct large-scale Kohn--Sham problems involve extremely high-dimensional Hamiltonian operators defined on plane-wave grids or large basis sets. To construct a reproducible and computationally tractable benchmark while retaining the essential optimization structure, we consider a reduced Kohn--Sham Rayleigh--Ritz formulation. Such reduced problems arise naturally in subspace iteration and adaptive basis methods, where the Hamiltonian is projected onto a smaller orthonormal basis \citep{lin2012adaptive}.

Let $H^\ast$ denote the frozen Kohn--Sham Hamiltonian obtained after a self-consistent field (SCF) calculation. Given an orthonormal basis matrix $B \in \mathbb{R}^{N_g \times p}$, the Hamiltonian projected onto the reduced subspace is

\[
H_{\mathrm{red}} = B^\top H^\ast B ,
\]
where $p$ denotes the reduced subspace dimension. The Rayleigh--Ritz problem then seeks an orthonormal matrix 
$Q \in \mathrm{St}(p,d)$ that minimizes

\begin{equation}
\min_{Q \in \mathrm{St}(p,d)}
\operatorname{tr}(Q^\top H_{\mathrm{red}} Q),
\label{eqn:ks_rr}
\end{equation}
where $d$ denotes the number of occupied orbitals. This formulation corresponds to computing the $d$ lowest eigenvectors of the reduced Hamiltonian while enforcing orthogonality constraints.

We compare BOOOM with a Riemannian conjugate gradient (RCG) method on the Stiefel manifold, implemented using the Manopt toolbox \citep{boumal2014manopt}. As a reference solution, the exact eigenvalue decomposition of the reduced Hamiltonian $H_{\mathrm{red}}$ is computed, which provides the optimal value of the Rayleigh--Ritz objective. To construct the benchmark problem, we first perform a Kohn--Sham self-consistent field (SCF) calculation for a hydrogen molecule (H$_2$) using the KSSOLV~2.0 package \citep{Jiao2022kssolv}. This calculation produces the converged Kohn--Sham Hamiltonian $H^\ast$ together with the corresponding set of occupied orbitals $U^\ast$. The Hamiltonian $H^\ast$ is then treated as a fixed operator in the subsequent experiments. Next, a reduced orthonormal basis $B \in \mathbb{R}^{N_g \times p}$ is constructed. Let $k_0$ denote the number of converged Kohn--Sham orbitals obtained from the SCF calculation. The first $k_0$ columns of $B$ are set equal to these orbitals $U^\ast$. The remaining $p-k_0$ columns are generated by sampling a Gaussian random matrix $Z \in \mathbb{R}^{N_g \times (p-k_0)}$ with independent $N(0,1)$ entries. To ensure orthogonality with the orbitals $U^\ast$, the sampled matrix is first projected onto the orthogonal complement of $\mathrm{span}(U^\ast)$,
\[
Z \leftarrow Z - U^\ast(U^{\ast\top} Z).
\]
A thin QR decomposition is then applied to the projected matrix,
\[
Z = QR,
\]
and the first $p-k_0$ columns of $Q$ are used to complete the basis $B$. The reduced Hamiltonian used in the Rayleigh--Ritz optimization is then defined as
\[
H_{\mathrm{red}} = B^\top H^\ast B .
\]
Given $H_{\mathrm{red}}$, the optimization problem seeks an orthonormal matrix 
$Q \in \mathbb{R}^{p \times d}$ satisfying $Q^\top Q = I_d$ that minimizes the 
Rayleigh--Ritz objective
\begin{equation}
\min_{Q \in \mathrm{St}(p,d)} 
\operatorname{tr}(Q^\top H_{\mathrm{red}} Q).
\label{eq:ks_rr_objective}
\end{equation}
The exact eigenvalue decomposition of $H_{\mathrm{red}}$ provides the optimal solution of \eqref{eq:ks_rr_objective} and serves as the ground-truth reference when computing objective value gaps and optimality residuals for the optimization methods. We consider reduced dimensions
\[
p \in \{20, 50, 80, 100\}, 
\qquad d = 2,
\]
and perform $10$ Monte Carlo replicates. 
\begin{figure}[!h]
	\centering
    \includegraphics[width=\textwidth]{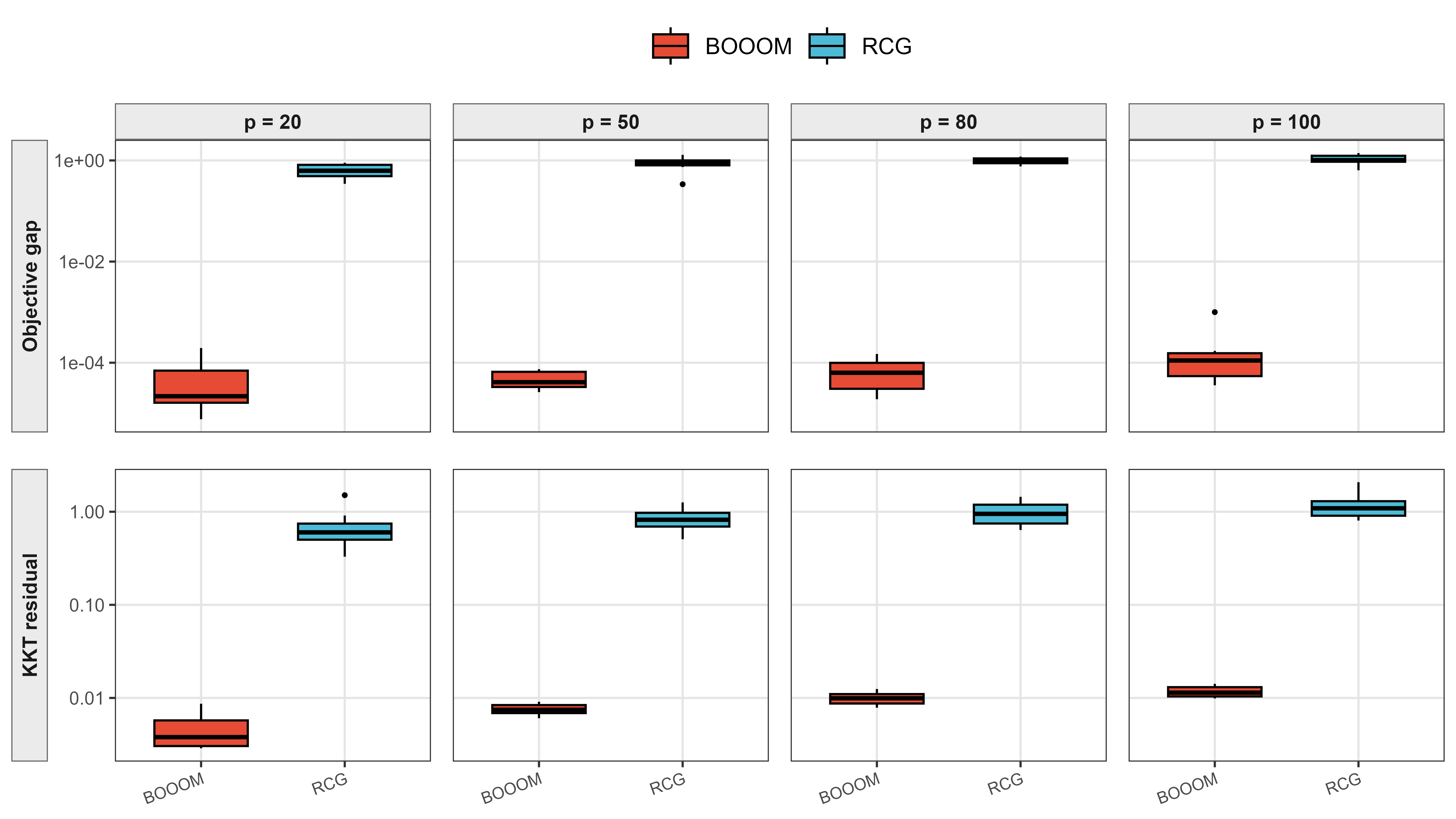}
	\caption{Reduced Kohn--Sham Rayleigh--Ritz optimization: Each column corresponds to a reduced subspace dimension $p \in \{20,50,80,100\}$ with $d=2$ extracted eigenvectors. The top row reports the objective value gap relative to the optimal solution obtained from the exact eigenvalue decomposition of $H_{\mathrm{red}}$, while the bottom row shows the KKT residual measuring violation of the first-order optimality conditions. Results are based on $10$ Monte Carlo replicates and the vertical axis is shown on a logarithmic scale.}
	\label{fig:ks_results}
\end{figure}
The parameter $d$ denotes the number of eigenvectors extracted in the Rayleigh--Ritz optimization. In electronic structure calculations this quantity corresponds to the number of Kohn--Sham orbitals being computed within the projected subspace. For the hydrogen molecule considered in our benchmark, the electronic system contains only two electrons, so the number of physically relevant orbitals is very small. Consequently we set $d=2$ in order to compute the lowest few eigenstates of the reduced Hamiltonian. Such small values of $d$ are typical in Rayleigh--Ritz subspace iterations used in electronic structure calculations, where the primary computational challenge lies in the large ambient dimension of the Hamiltonian rather than the number of eigenstates being extracted \citep{goedecker1999linear,lin2012adaptive}. Consequently, the optimization variable $Q \in \mathrm{St}(p,d)$ has a small column dimension $d$ while the reduced subspace dimension $p$ varies across different problem sizes. 

Performance is evaluated using two metrics that assess solution accuracy and optimality. The first metric is the objective value gap, defined as the difference between the obtained objective value and the optimal value given by the exact eigenvalue decomposition of $H_{\mathrm{red}}$. The second metric is the Karush--Kuhn--Tucker (KKT) residual, which measures the norm of the first-order optimality condition associated with the Stiefel-constrained optimization problem. Together, these metrics quantify how closely the computed solution approaches the optimal Rayleigh--Ritz solution and how well the orthogonality-constrained optimality conditions are satisfied. Figure~\ref{fig:ks_results} presents boxplots of the objective value gap and KKT residual across Monte Carlo replicates for BOOOM and RCG. These metrics quantify both solution accuracy and convergence quality relative to the exact eigen solution. Across all reduced problem sizes, BOOOM consistently achieves smaller objective gaps and lower optimality residuals while maintaining competitive computational time, demonstrating its effectiveness for orthogonality-constrained optimization problems arising in electronic structure calculations. The numerical summaries corresponding to these boxplots are provided in Appendix Table~\ref{tab:ks_results}.

\section{Applications and Modeling Flexibility of BOOOM}\label{sec:case_study}
While BOOOM is readily applicable to a wide range of existing optimization problems on the Stiefel manifold, its primary advantage lies in its modeling flexibility. In many statistical and machine learning problems, formulating objectives with orthogonality constraints often necessitates designing problem-specific optimization algorithms, creating a bottleneck that limits methodological development. BOOOM addresses this challenge by providing a general-purpose, black-box optimization framework for $\mathrm{St}(p,d)$. Once an objective is specified over the Stiefel manifold, BOOOM can be applied directly, regardless of smoothness or availability of derivatives, thereby decoupling model formulation from optimization. To illustrate this flexibility, we consider a novel supervised sparse principal component analysis formulation motivated by metabolite profiling for colorectal cancer. This example demonstrates how BOOOM can optimize a composite, non-convex objective combining reconstruction, sparsity, and discriminative structure without requiring a specialized solver.

\medskip
To improve interpretability in PCA, sparse PCA incorporates variable selection, while supervised extensions leverage class labels to enhance discriminative power \citep{feng2019supervised, shi2020supervised}. In many applications, however, one seeks a low-dimensional representation that simultaneously (i) reconstructs the data well, (ii) selects a subset of informative variables, and (iii) separates predefined groups. These competing objectives naturally lead to a composite formulation that balances reconstruction, sparsity, and discrimination. Motivated by this perspective, we consider the following supervised sparse PCA formulation:
\begin{equation}
\label{eqn:ACS_obj}
\min_{Q \in \mathrm{St}(p,d)}\;
\|X - XQQ^\top\|_F^{2}
+ \lambda_1 \|Q\|_{2,1}
+ \lambda_2 \mathcal{L}_{\mathrm{Fisher}}(XQ, Y).
\end{equation}
Here, $X \in \mathbb{R}^{n \times p}$ denotes the data matrix, $\|\cdot\|_{2,1}$ promotes row-wise sparsity in $Q$, and $\mathcal{L}_{\mathrm{Fisher}}$ is a discriminative loss. The $L_{2,1}$ norm is defined as
\[
\|Q\|_{2,1} = \sum_{i=1}^{p} \|Q_{i\cdot}\|_2,
\]
encouraging selection of a subset of variables. Let $\mathcal{I}_0 = \{i : Y_i = 0\}$ and $\mathcal{I}_1 = \{i : Y_i = 1\}$ denote the class index sets, with sizes $n_0$ and $n_1$. The projected class means are
\[
\mu_0 = \frac{1}{n_0} \sum_{i \in \mathcal{I}_0} (XQ)[i,:], 
\quad
\mu_1 = \frac{1}{n_1} \sum_{i \in \mathcal{I}_1} (XQ)[i,:],
\]
and the Fisher loss is given by
\[
\mathcal{L}_{\text{Fisher}}(XQ, Y)
= \sum_{i \in \mathcal{I}_0} \| (XQ)[i,:] - \mu_0 \|^2 
+ \sum_{i \in \mathcal{I}_1} \| (XQ)[i,:] - \mu_1 \|^2.
\]
The objective balances reconstruction accuracy, sparsity, and class separation, leading to an inherent trade-off. We therefore analyze the Pareto-optimal solutions \citep{collette2003multiobjective}, where no alternative simultaneously improves sparsity and classification performance.

We apply this model to a gut microbiome–metabolome dataset to identify metabolites associated with colorectal cancer while maintaining discrimination between healthy individuals and colorectal cancer patients. The original dataset consisted of 450 metabolites measured in 347 subjects. Rare metabolites with substantial missingness were removed, leaving 110 metabolites for analysis. These measurements were log-transformed, followed by quantile normalization to standardize the overall distribution of metabolite intensities across samples and improve comparability. The normalized data were then centered and scaled to unit variance to prevent metabolites with larger variance from dominating the reconstruction and discrimination objectives. To focus the analysis on the comparison between healthy individuals and colorectal cancer patients, subjects with a history of colorectal surgery or multiple polypoid adenomas with low-grade dysplasia were excluded. The final analytic sample included 277 subjects, including 127 healthy individuals and 150 colorectal cancer patients.

We optimize the objective in \eqref{eqn:ACS_obj} using BOOOM. The tuning parameters $(\lambda_1,\lambda_2)$ are selected over a logarithmically spaced grid 
to explore a broad range of sparsity-discrimination trade-offs. The resulting Pareto frontier, shown in Figure~\ref{fig: pareto_curves}, characterizes the relationship between sparsity and classification performance. For instance, $(\lambda_1,\lambda_2) = (10^6, 10^3)$ yields higher discriminative power with reduced sparsity, whereas smaller values of $(\lambda_1,\lambda_2)$ promote sparser solutions at the cost of slightly increased misclassification. The $L_{2,1}$ penalty induces row-wise sparsity in $Q$, effectively selecting a subset of metabolites that contribute to the learned low-dimensional representation. Consequently, the magnitude $\|Q[i,:]\|_2 \in [0,1]$ provides a natural, scale-normalized importance score for each metabolite. Figure~\ref{fig: importance_vars} displays the top 20 metabolites ranked by this measure under the tuning configuration with minimal misclassification error, where larger row norms indicate stronger contributions to $XQ$. 
\begin{figure}[!h]
	\centering
	\includegraphics[width=0.8\textwidth]{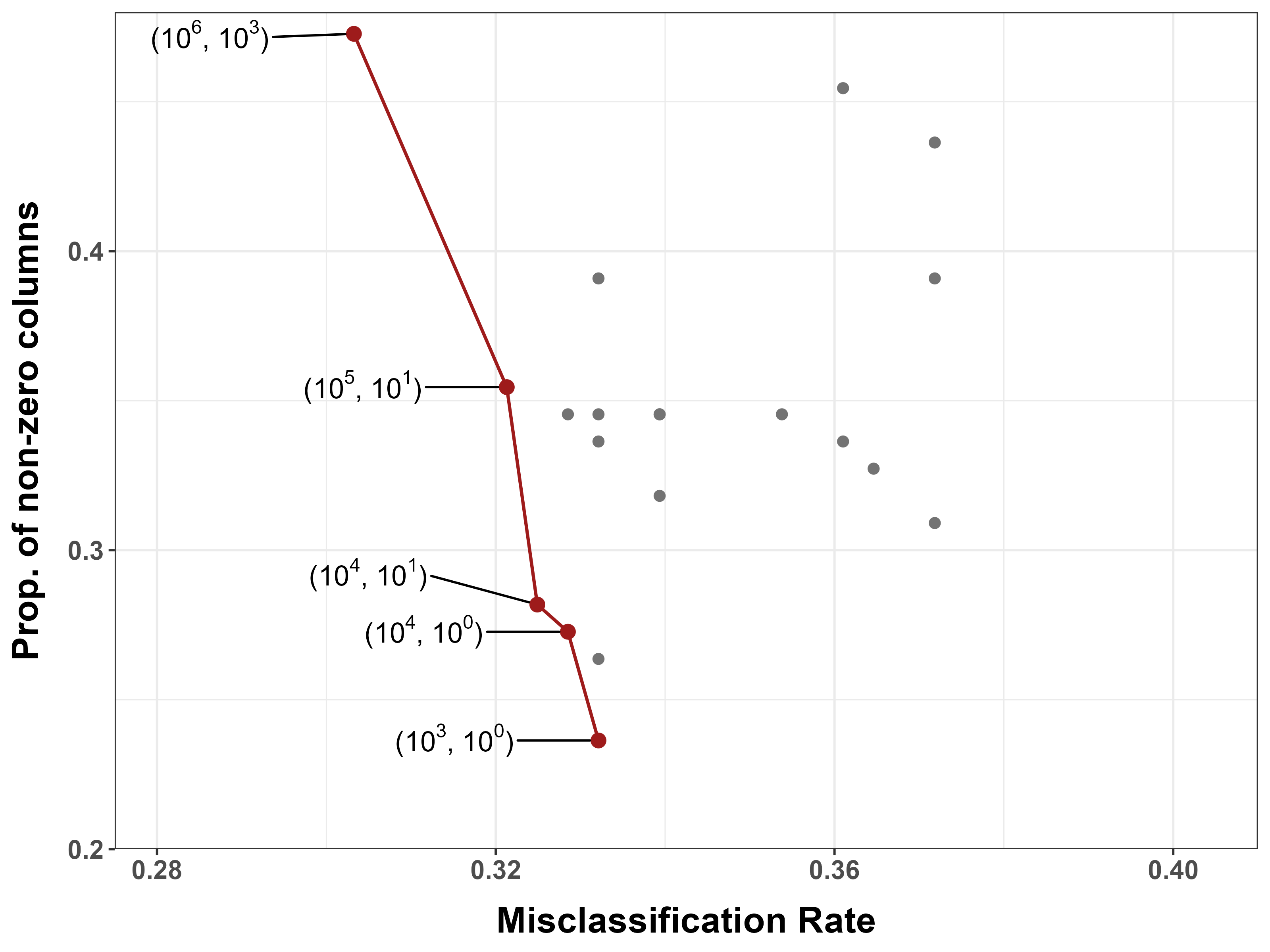}
	\caption{Pareto curves showing the relationship between sparsity (proportion of non-zero columns) and accuracy (misclassification rate). The red line marks the Pareto-optimal frontier obtained across combinations of $(\lambda_1$, $\lambda_2$) with selected tuning parameter values labeled at their corresponding points.}
	\label{fig: pareto_curves}
\end{figure}

Several metabolites among the top-ranked features have well-established associations with colorectal cancer (CRC). \textit{Citrulline} (rank 2) exhibits reduced circulating levels in CRC patients relative to healthy controls \citep{bednarz2020arginine}. \textit{Serum pipecolic acid} (rank 4) has been reported to be elevated in CRC patients \citep{hashim2020global}, while \textit{glycine} (rank 5) shows increased serum levels in CRC \citep{tevini2022changing}. Similarly, \textit{N\textsuperscript{1}-acetylspermidine} (rank 7) is significantly elevated in CRC patients compared to healthy individuals \citep{udo2020urinary}.
\begin{figure}[!h]
	\centering
	\includegraphics[width=0.8\textwidth]{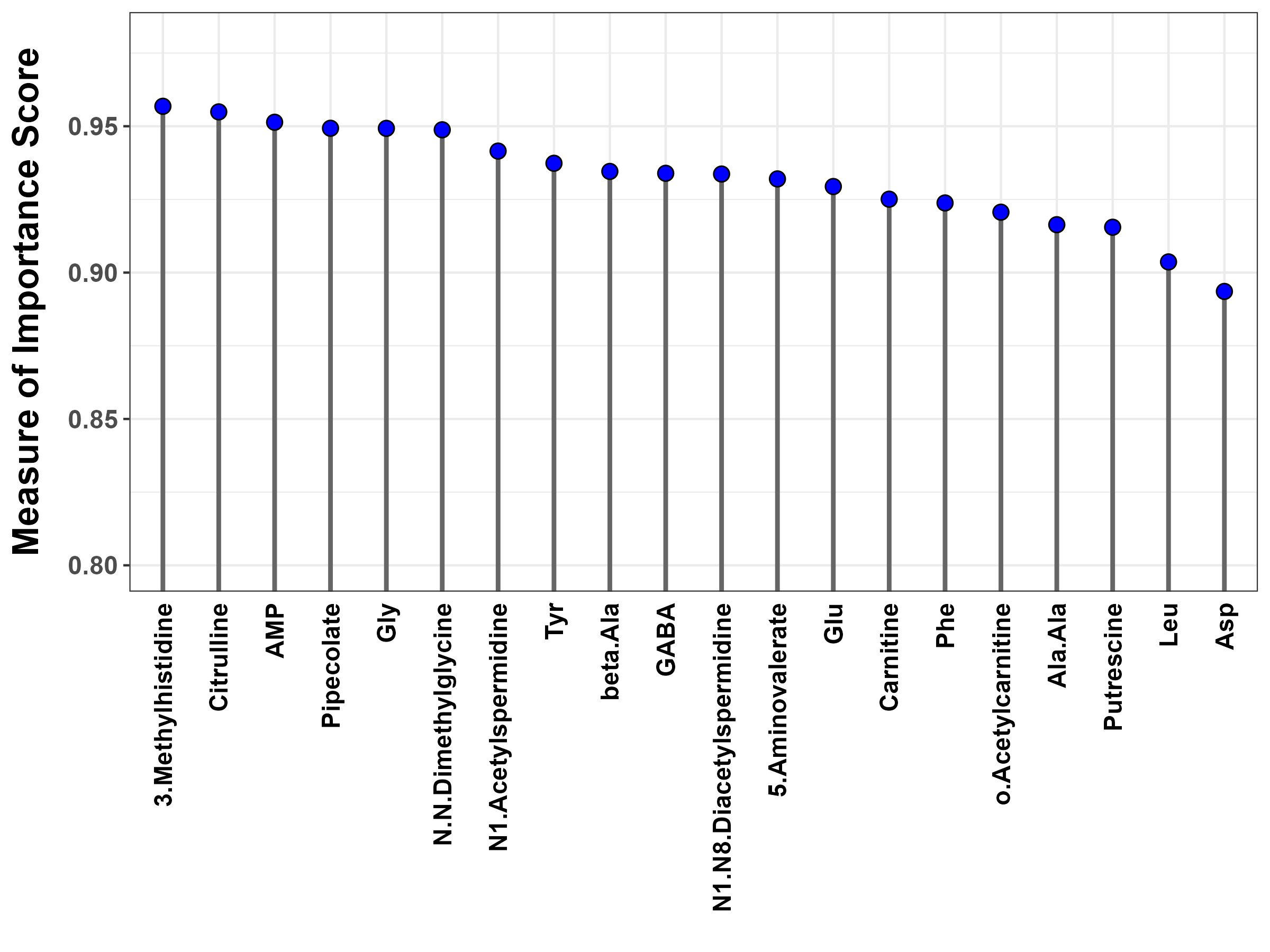}
	\caption{Top 20 metabolites ranked by their importance scores, defined by the row norms of the loading matrix $Q$. Higher scores indicate stronger contributions to the learned low-dimensional representation, highlighting metabolites most relevant to colorectal cancer.}
	\label{fig: importance_vars}
\end{figure}
Other highly ranked metabolites reflect broader alterations in CRC-associated metabolic pathways rather than acting as isolated biomarkers. \textit{3-methylhistidine} (rank 1), while not a standalone marker, has been linked to CRC through an increased methylhistidine-to-histidine ratio in serum \citep{avram2025metabolomic}. \textit{AMP} (rank 3) is not directly diagnostic, but elevated extracellular levels in the tumor microenvironment can enhance \textit{adenosine monophosphate deaminase (AMPase)} activity, which has potential relevance for CRC detection \citep{shi2022potential}. \textit{Dimethylglycine} (rank 6) is metabolically converted to \textit{sarcosine} and subsequently to \textit{glycine}, both of which are elevated in CRC \citep{bernasocchi2024subcellular}. Although \textit{tyrosine} and \textit{GABA} (ranks 8 and 10) are not standalone biomarkers, they exhibit CRC-specific shifts in microbiome associations, including weakened correlations with \textit{Synergistes sp.\ 3\_1\_syn} and \textit{Porphyromonas gingivalis}, respectively \citep{coker2022altered}. Finally, \textit{beta-alanine} (rank 9) metabolism has been identified as a significantly enriched pathway distinguishing CRC patients from healthy controls \citep{li2022metabolomic}.

Overall, this application demonstrates that BOOOM can serve as a unified optimization engine for complex Stiefel-constrained problems. By decoupling model design from optimization, it allows practitioners to formulate rich, application-driven objectives without developing bespoke algorithms. The empirical results further show that such flexibility does not come at the cost of performance or interpretability, with the learned representations recovering biologically meaningful signals in colorectal cancer.

\section{Discussion}\label{sec:discussion}
This work introduces \textsc{BOOOM}, a general-purpose framework for optimization over the Stiefel manifold that departs fundamentally from the dominant paradigm of local, derivative-driven methods. By combining a global Givens-based parametrization with a derivative-free exploration strategy rooted in RMPS, the method provides a unified mechanism for handling black-box, non-smooth, and highly multimodal objectives while preserving feasibility exactly. Across a diverse collection of benchmark problems, ranging from classical multimodal test functions to ICA, low-rank decomposition, and Kohn--Sham eigenvalue problems, BOOOM consistently demonstrates strong empirical performance, often outperforming both Riemannian optimization methods (e.g., RCG, RTR, RGD, RBB via \texttt{Manopt}) and general-purpose constrained solvers (e.g., \texttt{fmincon} variants).

At a conceptual level, the primary contribution is not merely algorithmic but structural: the angle-space reformulation decouples constraint handling from optimization, enabling a fully unconstrained search while maintaining exact orthogonality. This perspective allows BOOOM to ``jump across basins'' in a way that is fundamentally inaccessible to purely local methods. In particular, the structured rotational moves induced by Givens updates provide a deterministic yet globally explorative mechanism, contrasting sharply with both gradient-based descent (which is inherently local) and stochastic metaheuristics (which often lack structure and efficiency). The additional ability to evaluate candidate rotations in parallel further enhances the practical appeal of the method, especially in settings where objective evaluations are expensive.

Despite these strengths, BOOOM also exhibits several important limitations that merit careful discussion. The most immediate limitation is computational. Each BOOOM iteration evaluates $2\binom{p}{2}$ candidates, leading to an $O(p^2)$ per-iteration cost in terms of function evaluations. While this exhaustive polling strategy is central to the method’s strong exploratory capability, it becomes a bottleneck in high-dimensional settings. In particular, problems where large Stiefel manifolds arise, such as high-dimensional ICA (e.g., $p \gtrsim 500$), large-scale orthogonal joint diagonalization, or subspace optimization in electronic structure calculations (e.g., plane-wave Kohn--Sham problems with large basis sizes), may render BOOOM much slower than derivative-based alternatives. Parallelization partially mitigates this issue, as the candidate evaluations are embarrassingly parallel. However, even with parallel implementation, BOOOM remains computationally heavier than some of the local Riemannian methods or first-order algorithms that exploit gradient structure. BOOOM should therefore be viewed as complementary rather than universally superior: its advantages are most pronounced in black-box, non-smooth, or highly multimodal settings where local methods struggle.

A second limitation concerns the global convergence guarantees. The theoretical result establishing global convergence in probability relies on the open-ball reachability property under infinitely many full-support restarts. While this provides a clean and rigorous justification, it is inherently asymptotic and not directly reflective of practical implementations, where only a finite number of runs is feasible. In practice, BOOOM employs a small number of restarts (often fewer than 10), combined with warm-start initialization and large initial step sizes. Empirically, this strategy performs remarkably well and consistently outperforms both classical metaheuristics (e.g., Genetic Algorithms, Simulated Annealing) and local manifold optimizers in our experiments \citep{kim2026smart, Das2023MsiCOR, Das2023RMPSH}. Nevertheless, the theoretical framework does not yet fully capture this behavior. In particular, the current analysis treats restarts as the sole driver of global exploration, whereas a substantial portion of BOOOM’s practical effectiveness appears to arise from the intrinsic exploratory capacity of RMPS within a single run. Bridging this gap, by developing finite-restart or single-run global guarantees, remains an important open problem. In the present setting, understanding how structured rotational polling interacts with the geometry of the Stiefel manifold to enable effective exploration is an important direction for future work. 

Lastly, the Givens-based parametrization, while enabling unconstrained optimization, also introduces redundancy due to its many-to-one nature. Multiple angle configurations correspond to the same point on $\mathrm{St}(p,d)$, enlarging the effective search space and potentially slowing convergence. Although this redundancy can occasionally aid exploration, it contributes to the overall computational burden, suggesting that more parsimonious parametrizations or dimension-reduction strategies in angle space may improve efficiency.

These limitations point to several directions for future research. On the algorithmic side, improving scalability through randomized or block-coordinate polling, adaptive selection of rotation subsets, or hybrid schemes combining BOOOM’s global exploration with local gradient-based refinement is a natural priority. On the theoretical side, developing guarantees under finite restart budgets, characterizing single-run exploration behavior, and establishing non-asymptotic convergence results would significantly strengthen the framework. Extensions to other manifolds or constrained geometries may further broaden applicability.

In summary, BOOOM provides a unified and flexible framework for orthogonality-constrained optimization, particularly well-suited to black-box and highly non-convex problems. While it does not replace efficient local methods in smooth settings, it fills an important gap by enabling structured global exploration with exact feasibility, supported by both theoretical grounding and strong empirical performance.

\acks{Authors declare no conflicts of interest.  \vspace{0.3cm}

\noindent PD is partially supported by the National Institutes of Health/National Cancer Institute Cancer Center Support Grant P30 CA016059.}

\section*{Code and data}
Code for BOOOM, including demonstration scripts and the real dataset used in this article, is available on GitHub at \href{https://github.com/bkim12/BOOOM}{https://github.com/bkim12/BOOOM}. \vspace{0.3cm}

\appendix
\renewcommand{\thefigure}{A.\arabic{figure}} 
\renewcommand{\thetable}{A.\arabic{table}}   
\setcounter{figure}{0}
\setcounter{table}{0}

\clearpage
\section{Proofs}\label{app:theorem}


In the following, we present the proofs of all the technical results presented in the paper.

\subsection{Proof of Lemma~\ref{thm:stiefel-givens}}

\begin{proof}
Let $Q \in \mathrm{St}(p,d)$ and fix any base matrix $Q_0 \in \mathrm{St}(p,d)$. Since both $Q$ and $Q_0$ have orthonormal columns, they each admit orthogonal completions in $\mathrm{SO}(p)$. Let $\widetilde{Q}, \widetilde{Q}_0 \in \mathrm{SO}(p)$ be full orthogonal matrices such that
\begin{equation*}
    Q = \widetilde{Q}[:, 1\!:\!d], \qquad Q_0 = \widetilde{Q}_0[:, 1\!:\!d],    
\end{equation*}
\noindent i.e., $Q$ and $Q_0$ are the first $d$ columns of their respective completions. This is always possible via Gram-Schmidt, Householder QR, or any other orthogonalization procedure.

Now observe that $\mathrm{SO}(p)$ is a group under matrix multiplication, so for any fixed $\widetilde{Q}_0 \in \mathrm{SO}(p)$ and any $\widetilde{Q} \in \mathrm{SO}(p)$, there exists $\widetilde{Q}'$ such that
\begin{equation*}
    \widetilde{Q} = \widetilde{Q}' \cdot \widetilde{Q}_0.
\end{equation*}
\noindent By the classical Hurwitz theorem (Lemma~1 of \citealp{jiang2022givens}, following \citealp{Hurwitz1963}), every orthogonal matrix in $\mathrm{SO}(p)$ can be written as a product of Givens rotations, and hence we have the identity
\begin{equation*}
    \widetilde{Q}' = \prod_{(i,j)} R_{i,j}(\theta_{i,j})
    \quad \text{for some } \theta \in \mathbb{R}^{\binom{p}{2}}.    
\end{equation*}
\noindent Now, we have the chain of equality, 
\begin{equation*}
    Q = \widetilde{Q}[:, 1\!:\!d] = \left[ \prod_{(i,j)} R_{i,j}(\theta_{i,j}) \right] \widetilde{Q}_0[:, 1\!:\!d] = \left[ \prod_{(i,j)} R_{i,j}(\theta_{i,j}) \right] Q_0.
\end{equation*}
\noindent This completes the proof.
\end{proof}

\subsection{Proof of Proposition~\ref{thm:givens_param}}

\begin{proof}
First define $\Psi:\Theta\to\mathrm{SO}(p)$ by $\Psi(\theta)=\prod_{i<j}R_{i,j}(\theta_{i,j})$. Each factor $R_{i,j}(\cdot)$ is smooth in its angle and matrix multiplication is smooth, hence $\Psi$ is smooth. By the classical Hurwitz–Givens decomposition~\citep{Hurwitz1963}; Lemma~1 of~\cite{jiang2022givens}, every $U\in\mathrm{SO}(p)$ can be written as a product of planar Givens rotations, so $\Psi$ is surjective onto $\mathrm{SO}(p)$.

Next consider the left action of $\mathrm{SO}(p)$ on $\mathrm{St}(p,d)$, $A(U,Q)=UQ$. 
Fix any $Q_0 \in \mathrm{St}(p,d)$. 
To show that the orbit map $A_{Q_0}(U)=UQ_0$ is surjective, it suffices to prove: 
for every $Q \in \mathrm{St}(p,d)$ there exists $U \in \mathrm{SO}(p)$ with $UQ_0 = Q$. 
Indeed, given such a $Q$, choose orthogonal completions $\widetilde{Q}_0,\widetilde{Q} \in O(p)$ 
whose first $d$ columns equal $Q_0$ and $Q$, respectively; if $\det(\widetilde{Q}_0)$ or 
$\det(\widetilde{Q})$ is $-1$, flip the sign of a column outside the first $d$ so that 
$\widetilde{Q}_0,\widetilde{Q} \in \mathrm{SO}(p)$. Then 
$U := \widetilde{Q}\,\widetilde{Q}_0^\top \in \mathrm{SO}(p)$ satisfies $UQ_0 = Q$, 
establishing surjectivity of $A_{Q_0}$. Since $\Psi$ is surjective onto $\mathrm{SO}(p)$, 
the composition $\Phi = A_{Q_0} \circ \Psi$ is surjective onto $\mathrm{St}(p,d)$.

Composing the two maps yields $\Phi:=A_{Q_0}\circ\Psi$, i.e., $\Phi(\theta)=\Psi(\theta)Q_0=\big[\prod_{i<j}R_{i,j}(\theta_{i,j})\big]Q_0$, which is smooth as a composition of smooth maps and surjective since both $\Psi$ and $A_{Q_0}$ are surjective.

For the optimization equivalence, note that $\Phi(\theta)\in\mathrm{St}(p,d)$ for all $\theta$, so $\inf_{\theta} f(\Phi(\theta))\ge \inf_{Q\in\mathrm{St}(p,d)} f(Q)$. Conversely, because $f$ is continuous and $\mathrm{St}(p,d)$ is compact, a minimizer $Q^\star$ exists; by surjectivity of $\Phi$ there is $\theta^\star$ with $\Phi(\theta^\star)=Q^\star$, hence $\inf_{\theta} f(\Phi(\theta))\le f(\Phi(\theta^\star))=f(Q^\star)=\min_{Q} f(Q)$. The two inequalities give $\min_{\theta} f(\Phi(\theta))=\min_{Q} f(Q)$.
\end{proof}

\subsection{Proof of Corollary \ref{cor:rmps-in-image}}

\begin{proof}
Let $U_t:=\prod_{s=0}^{t-1} R_{i_s j_s}(\theta_s)$ for any $t \geq 1$. Each factor is orthogonal with determinant of $1$, hence $U_t \in \mathrm{SO}(p)$ and $Q^{(t)}=U_t Q^{(0)}$. By the surjectivity of $\Phi$ established in Proposition~\ref{thm:givens_param}, there exists $\vartheta_t \in\Theta$ such that $U_t=\prod_{i<j} R_{i,j}((\vartheta_{t})_{i,j} )$, leading to $Q_t=\Phi(\vartheta_t)$.
\end{proof}

\subsection{Proof of Theorem~\ref{thm:booom-stationarity-angles}}

\begin{proof}
We begin by proving the first part of the result. Pick any $i \in \{ 1, 2, \dots, N \}$, and consider the map $\tilde{g}_i : t \mapsto g(\nu + te_i)$. Clearly, $\tilde{g}_i \in \mathcal{C}^1(\mathbb{R})$. Since there is stationarity in the evaluations, from $g(\nu)\le g(\nu+\delta_k e_i)$ with $\delta_k\downarrow0$ we get
$\liminf_{t\downarrow0}\frac{g(\nu+t e_i)-g(\nu)}{t}\ge 0$. On the other hand, from $g(\nu)\le g(\nu-\delta_k e_i)$ we get
$\limsup_{t\uparrow0}\frac{g(\nu+t e_i)-g(\nu)}{t}\le 0$. Differentiability of $\tilde{g}_i$ then forces us to have
\begin{equation*}
    \partial_i g(\nu) = 0,
\end{equation*}
\noindent where $\partial_i$ denotes the partial differentiation operator with respect to $i$-th coordinate. Since, $i$ is arbitrary, we have $\nabla g(\nu)=\mathbf{0}$.

For the second part, we equip $\mathrm{St}(p,d)$ with the canonical embedded Riemannian metric $\langle \Xi,\Upsilon\rangle=\mathrm{tr}(\Xi^\top\Upsilon)$ on $T_Q\mathrm{St}(p,d)$ (see \citealp[Sec.~3.5]{AbsilMahonySepulchre2008}). Let $\mathrm{d}f(Q)$ be the differential operator of $f$ at $Q$. Then, by Riesz representation of $\mathrm{d}f(Q)$ under the metric on $T_Q\mathrm{St}(p,d)$ we obtain that
\begin{equation*}
    \mathrm{d}f(Q)[\Xi]=\langle \operatorname{grad} f(Q),\,\Xi\rangle\quad\text{for all }\Xi\in T_Q\mathrm{St}(p,d).
\end{equation*}
\noindent Since $\nu$ is not on a wrapping seam, $D\mathscr{M}(\nu)=I$, hence by the chain rule
\begin{equation*}
    \nabla g(\nu) \;=\; D(\,f\circ\Phi\circ\mathscr{M}\,)(\nu)^\top \;=\; D\widetilde{\Phi}(\nu)^\top\, (\text{grad} f(Q)),
\end{equation*}
\noindent Concretely, for any $v \in \mathbb{R}^N$, 
\begin{equation}\label{eq:chain-rule}
    \langle \nabla g(\nu),\,v\rangle_{\mathbb{R}^N}
    \;=\; \mathrm{d}f(Q)\big[D\widetilde{\Phi}(\nu)v\big]
    \;=\; \big\langle \operatorname{grad} f(Q),\, D\widetilde{\Phi}(\nu)v\big\rangle,
\end{equation}
\noindent If $\nabla g(\nu)=\bm{0}$ then the left-hand side of \eqref{eq:chain-rule} vanishes for every $v$, hence
\begin{equation*}
    \big\langle \operatorname{grad} f(Q),\, \Xi\big\rangle \;=\; 0
\qquad\text{for all }\Xi \in \mathrm{Im}\,D\widetilde{\Phi}(\nu).
\end{equation*}
Since $D\mathscr{M}(\nu)=I$, we have $D\widetilde{\Phi}(\nu)=D\Phi(\mathscr{M}(\nu))$. By the full-rank hypothesis, $\mathrm{Im}\,D\Phi(\mathscr{M}(\nu))=T_Q\mathrm{St}(p,d)$. Thus $\langle \operatorname{grad} f(Q),\,\Xi\rangle=0$ for every $\Xi\in T_Q\mathrm{St}(p,d)$, which, by positive definiteness of the inner product, implies $\operatorname{grad} f(Q)=\bm{0}$.
\end{proof}



\subsection{Proof of Lemma~\ref{thm:booom-open-ball}}
\begin{proof}
Without loss of generalized, we may shrink $\delta$ if needed so that $B_\delta(\nu^\star) \subset \Theta_0$. 

The proof follows from analyzing the behaviour of BOOOM algorithm under three different situations.

\begin{enumerate}
    \item[(i)] \emph{Mesh structure at a fixed step size:} Let us consider a phase of the BOOOM algorithm where the step size $s > 0$ is held fixed. During this time, RMPS polls the $2N$ axis-aligned neighbors $\nu\pm s e_i$ and, when an improvement is found, \emph{moves} by adding $\pm s e_i$ to the current iterate. Consequently, throughout such a phase every visited point lies on the affine lattice $\nu_{\text{start}} + s \mathbb{Z}^N$, where $\nu_{\mathrm{start}}\in \Theta_0$ is the point at which this phase began (the first iterate after the last step-size reduction or restart), and $\mathbb{Z}$ is the set of integers. This is the standard ``evolving mesh'' property of pattern search methods specialized to coordinate polling (cf.~\cite{torczon1997convergence}, Theorem~3.2; see also the survey \citealp{Kolda2003} for related mesh-based direct search schemes). Note that, since RMPS only moves in the direction where the objective value strictly improves, it visits each point on the lattice at most once. As a result, for a fixed step size $s > 0$, such a phase can run for at most $(M / s)^N$ iterations, where $M$ is the diameter of the compact set $\Theta_0$. 
    \item[(ii)] \emph{Behaviour of BOOOM inside a single run.} Now fix a run $r$ and consider BOOOM iterations. If the RMPS polls return no improvement, then the step size is decreased. As the step size $s^{(h)} \to 0$, there exists an index $h_0$ such that $s^{(h_0)} < \delta/(2\sqrt{N})$. If the BOOOM algorithm starts any phase with step size $s^{(h_0)}$, say at a starting point $\nu_{\mathrm{start}}\in\Theta_0$, then consider the vector with integral coordinates,
    \begin{equation*}
        z^\sharp \;:=\; \Big(\,\mathrm{round}\Big(\frac{\nu^\star_1-\nu_{\mathrm{start},1}}{s^{(h_0)}}\Big), \ldots, \mathrm{round}\Big(\frac{\nu^\star_N-\nu_{\mathrm{start},N}}{s^{(h_0)}}\Big)\,\Big)\in\mathbb{Z}^N.
    \end{equation*}
    \noindent Here, $\nu^\star_i$ is the $i$-th coordinate of $\nu^\star$, $\nu_{\mathrm{start},i}$ is the $i$-th coordinate of $\nu_{\mathrm{start}}$, and `$\mathrm{round}$' denotes rounding to the nearest integer (ties broken arbitrarily). By construction, 
    \begin{equation*}
        \big\|\nu_{\mathrm{start}} + s^{(h_0)} z^\sharp - \nu^\star \big\|_\infty \;\le\; \tfrac{s^{(h_0)}}{2}
        \quad\Rightarrow\quad
        \big\|\nu_{\mathrm{start}} + s^{(h_0)} z^\sharp - \nu^\star \big\|_2 \;\le\; \sqrt{N}\,\tfrac{s^{(h_0)}}{2} \;<\; \delta,
    \end{equation*}
    \noindent hence $\nu_{\mathrm{grid}} := \nu_{\mathrm{start}} + s^{(h_0)} z^\sharp \in B_\delta(\nu^\star)$. Thus, as soon as the current step size is smaller than $\delta/(2\sqrt{N})$, the corresponding lattice $\nu_{\mathrm{start}}+s^{(h_0)}\mathbb{Z}^N$ contains a point in $B_\delta(\nu^\star)$.
    \item[(iii)] \emph{Almost-sure eventual entry via restarts.} By design, each run of the algorithm restarts from an independent draw $\nu^{(r, 0)} \sim \mu$ on $\Theta_0$. Since $B_\delta(\nu^\star)$ is an open set contained in $\Theta_0$, and $\mu$ has support on the entire compact set $\Theta_0$, it follows that $\mu\big(B_\delta(\nu^\star)\big)>0$. As a result, the independent draws across runs yields
    \begin{equation*}
        \sum_{r=1}^\infty \mathbb{P}\!\left(\nu_0^{(r)}\in B_\delta(\nu^\star)\right)
        \;=\; \sum_{r=1}^\infty \mu\big(B_\delta(\nu^\star)\big)
        \;=\; \infty.
    \end{equation*}
    \noindent By the second Borel-Cantelli lemma~\citep{billingsley1995probability}, with probability one, there are many runs $r$ for which the restart point satisfies $\nu{(r, 0)}\in B_\delta(\nu^\star)$.
\end{enumerate}

Let us now carefully look at BOOOM iteration steps by combining the consequences obtained from the above three scenarios. If each run of the algorithm performs at most a fixed, finite number of iterations, then at the end of each such run, the algorithm must restart from an independent draw of the initial value $\nu^{(r,0)}$. This connects to scenario (iii) above, and in this case, we must have a run $r_0$ such that $\nu^{(r_0, 0)} \in B_\delta(\nu^\star)$, as there are infinitely many such choices. Suppose, on the other hand, there is a run $r$ such that the algorithm performs an $T_r$ number of iterations, such that $T_r$ is unbounded, i.e., $T_r \to \infty$. By scenario (i), such a run can spend at most $(M/s)^N$ iterations at step size $s$, and after that, it must decrease the step size. As a result, till it reaches the step size $s^{(h_0)}$ (as asserted in scenario (ii)), there can be at most
\begin{equation*}
    M^N s_{\text{initial}}^{-N} \left( 1 + \rho^{-N} + \rho^{-2N} + \dots + \rho^{-2h_0 N} \right) \leq M^N s_{\text{initial}}^{-N}\frac{1}{1-\rho^N},
\end{equation*}
\noindent iterations within the $r$-th run, since $\rho > 1$. Note that, this uniform bound on the total number of iterations is applicable for any number of iterations (potentially allowing $h \to \infty$ and $s^{(h)} \to 0$). As a result, either the algorithm needs to move on to the next $(r+1)$-th run before $M^N s_{\text{initial}}^{-N}\frac{1}{1-\rho^N}$ iterations, or must visit every point on $\nu_{\mathrm{start}} + s^{(h_0)}\mathbb{Z}^N$. The first case cannot occur as we can choose sufficiently large $r$ such that $T_r > M^N s_{\text{initial}}^{-N}\frac{1}{1-\rho^N}$. If the second case occurs, then by scenario (ii), we will visit $B_\delta(\nu^\ast)$ during one such iteration in run $r$.

\end{proof}

\subsection{Proof of Theorem  \ref{thm:booom-global-prob}}
\begin{proof}
Fix any $\epsilon > 0$. By Lemma~\ref{thm:booom-open-ball} and continuity of $\widetilde{\Phi}$ away from wrapping seams, with probability $1$, there exists a finite run $r_0$ and iteration $h_0$ such that $\nu^{(r_0, h_0)}$ enters the small ball $B_\epsilon(\nu^\dagger)$ in the angle space, on which $D\widetilde{\Phi}$ is of full rank. Hence, $\widetilde{\Phi}$ is a $\mathcal{C}^1$ submersion from $B_\epsilon(\nu^\dagger)$ onto $\mathcal{B}_\epsilon := \widetilde{\Phi}(B_\epsilon(\nu^\dagger)) \subset \mathcal{V}$, where $\mathcal{V}$ is the normal neighborhood from Lemma~\ref{thm:sosc-local}. Since $\widetilde{\Phi}$ is continuous, for all sufficiently small $\epsilon$, the objective function $f$ is $\mathcal{C}^2$ and locally strongly geodesically convex on $\mathcal{B}_\epsilon$. Since $g = f \circ \widetilde{\Phi}$, it is $\mathcal{C}^1$ and convex on $B_\epsilon(\nu^\dagger)$. As a result, by the continuity and convexity of $g$, for all sufficiently small $\epsilon > 0$, we have 
\begin{equation}
    \sup_{ \nu \in B_\epsilon(\nu^\dagger)} g(\nu) \leq \inf_{\nu \notin B_\epsilon(\nu^\dagger)} g(\nu) + \tau_1,
    \label{eqn:stationarity-bound}
\end{equation}
\noindent where $\tau_1$ is the progress threshold for within-run convergence.

Now consider the BOOOM iterations starting from $\nu^{(r_0, h_0)}$. Within this run, the RMPS stage cannot reduce $g$ sufficiently by going out of $B_\epsilon(\nu^\dagger)$, because of~\eqref{eqn:stationarity-bound}. Therefore, it must continue its search by decreasing the step sizes $s^{(h)} \downarrow 0$, and moving in the direction where the objective function is improved. This RMPS stage stops when it cannot reduce $g$ sufficiently by coordinate polling, at which point, invoking Theorem~\ref{thm:booom-stationarity-angles}, we obtain that $\nabla g = 0$, and $\operatorname{grad} f=0$ at the image point. Local strong geodesic convexity and the uniqueness in Lemma~\ref{thm:sosc-local} force the image iterate to lie at $Q^\dagger$. Since $Q^\dagger$ is the global minimizer, by restarting to another run, RMPS never yields a better value, effectively convering the iteration at $Q^\dagger$. Since $\epsilon > 0$ was arbitrary, the almost sure convergence is now immediate. 
\end{proof}

\subsection{Proof of Theorem~\ref{thm:booom-rate}}

\begin{proof}
Fix $k\ge 0$ and consider the (unsuccessful) poll at $\nu_k$ with step size $s_r$. By $L$-smoothness (the descent lemma; see \citealp[Chap.~2]{Nesterov2004} and, \citealp[Lem.~5.7]{Beck2017}) for each coordinate $i$,
\begin{equation*}
    g\!\left(\nu_k \pm s_k e_i\right)\ \le\ g\!\left(\nu_k\right)\ \pm\ s_k\,\partial_i g\!\left(\nu_k \right)\ +\ \frac{L}{2}\,s_k^2.    
\end{equation*}
\noindent Unsuccessful polling means $g(\nu_k \pm s_k e_i)\ge g(\nu_k)$ for both signs, hence
\begin{equation*}
    0\ \le\ g\!\left(\nu_k \pm s_k e_i\right)-g\!\left(\nu_k \right)\ \le\ \pm s_k\,\partial_i g\!\left(\nu_k \right)\ +\ \frac{L}{2}\,s_k^2,
\end{equation*}
\noindent which implies $|\partial_i g(\nu_k)|\ \le\ \tfrac{L}{2}\,s_k$ for all $i=1,\ldots,N$. Therefore
\begin{equation*}
    \big\|\nabla g(\nu_k)\big\|_2\ \le\ \frac{L}{2}\sqrt{N}\,s_k.
\end{equation*}
\noindent For convex $L$-smooth functions (see \citealp[Prop.~2.1.5]{Nesterov2004}),
\begin{equation*}
    g(\nu_k)-g(\nu^\star)\ \le\ \frac{1}{2L}\,\big\|\nabla g(\nu_k)\big\|_2^2\ \le\ \frac{L N}{8}\,s_k^2.    
\end{equation*}
\noindent Using $s_k=s_0/\rho^k$ and Bernoulli’s inequality, $\rho^{2k}=(1+(\rho^2-1))^{k}\ge 1+k(\rho^2-1)$, we obtain
\begin{equation*}
    s_k^2\ =\ \frac{s_0^2}{\rho^{2k}}\ \le\ \frac{s_0^2}{\,1+k(\rho^2-1)\,}\ \le\ \frac{s_0^2}{(\rho^2-1)(k+1)}.
\end{equation*}
\noindent Combining the last two inequalities yields
\begin{equation*}
    g(\nu_k)-g(\nu^\star)\ \le\ \frac{L N}{8}\,s_k^2\ \le\ \frac{L N\, s_0^2}{8(\rho^2-1)}\cdot \frac{1}{k+1},
\end{equation*}
\noindent which is the claim.
\end{proof}
\clearpage
\section{Additional Technical Details}\label{app:tech-details}

\subsection{Brief discussion on Riemannian manifold}\label{app:manifold-overview}

Before measuring distances, a space must support standard differential calculus. A smooth manifold $\mathcal{M}$ of dimension $n$ is a topological space equipped with an atlas of coordinate charts mapping local neighborhoods to $\mathbb{R}^n$, such that all transition maps between overlapping charts are smooth ($C^\infty$) diffeomorphisms. Because $\mathcal{M}$ is generally nonlinear, we cannot perform standard vector algebra directly on its elements (or points). Instead, we linearize the space locally.
\begin{enumerate}
    \item \textbf{The Tangent Space} ($T_p\mathcal{M}$): At any point $p \in \mathcal{M}$, the tangent space is an $n$-dimensional real vector space. Formally, it is the space of all point-derivations at $p$ (linear operators that satisfy the Leibniz rule on smooth functions defined near $p$). Geometrically, it contains all possible "velocity vectors" of smooth curves passing through $p$.
    \item \textbf{The Tangent Bundle} ($T\mathcal{M}$): The disjoint union of all tangent spaces across the manifold, forming a smooth manifold of dimension $2n$. A vector field $X$ is a smooth assignment of a tangent vector $X_p \in T_p\mathcal{M}$ to each point $p$.
\end{enumerate}

A smooth manifold becomes a Riemannian manifold $(\mathcal{M}, g)$ when equipped with a Riemannian metric $g$. The metric is a smoothly varying family of inner products. At each point $p \in \mathcal{M}$, $g_p$ is a symmetric, positive-definite bilinear form on the tangent space $g_p : T_p\mathcal{M} \times T_p\mathcal{M} \to \mathbb{R}$, also denoted by $\langle \cdot, \cdot\rangle_p$ (or just $\langle \cdot, \cdot\rangle$). For $v \in T_p\mathcal{M}$, we define its norm as $\|v\|_p = \sqrt{\langle v, v \rangle_p}$. For a smooth curve $\gamma: [a, b] \to \mathcal{M}$, its length is given by $L(\gamma) = \int_a^b \|\dot{\gamma}(t)\|_{\gamma(t)} dt$. A curve $\gamma(t)$ is a geodesic if its velocity vector field $\dot{\gamma}(t)$ is parallel transported along the curve itself. Locally, geodesics are length-minimizing curves.

For the specific case of the Stiefel manifold, we use the standard description of the Stiefel tangent space; see~\citealp[Sec.~3.5]{AbsilMahonySepulchre2008},
\begin{equation*}
    T_Q\mathrm{St}(p,d)\;=\;\Big\{\,Z\in\mathbb{R}^{p\times d}\,:\,Q^\top Z+Z^\top Q=0\,\Big\},    
\end{equation*}
\noindent and in particular $T_{Q}\mathrm{St}(p,d)$ is this subspace at the base point $Q \in \mathrm{St}(p,d)$. 

Let us consider a real-valued function $f: \mathcal{M} \to \mathbb{R}$. At a point $p \in \mathcal{M}$, the differential $\mathrm{d}f_p$ is a linear functional that maps a tangent vector $v \in T_p\mathcal{M}$ to a real number. Geometrically, it measures the rate of change of $f$ along the vector $v$. If we take a smooth curve $\gamma: (-\epsilon, \epsilon) \to \mathcal{M}$ such that $\gamma(0) = p$ and $\dot{\gamma}(0) = v$, the action of the differential is given by
$$
\mathrm{d}f_p(v) = \left. \frac{d}{dt} f(\gamma(t)) \right|_{t=0}
$$
\noindent The gradient, $\operatorname{grad} f$, is a vector field such that at a point $p \in \mathcal{M}$, it is the unique tangent vector that represents the differential $\mathrm{d}f_p$ via the metric. By the Riesz Representation Theorem, for all $v \in T_p\mathcal{M}$, the gradient satisfies
$$\langle \operatorname{grad} f(p), v \rangle_p = \mathrm{d}f_p(v)$$
\noindent In a local coordinate chart $(x^1, \dots, x^n)$, if the metric is given by the matrix $g_{ij}$ and its inverse is $g^{ij}$, the gradient is computed through the partial derivatives as 
$$
\operatorname{grad} f = \sum_{i,j} g^{ij} \frac{\partial f}{\partial x^j} \frac{\partial}{\partial x^i}.
$$

\subsection{Auxiliary mapping properties of the wrapped Givens parametrization}\label{subsec:auxiliary}

In this section, we take a closer look at the wrapped Givens parametrization of the Stiefel manifold $\mathrm{St}(p,d)$. Let $p \geq d$, $N := \binom{p}{2}$ and fix any $Q_0\in\mathrm{St}(p,d)$. Let us define the (redundant) Givens map
\begin{equation*}
    \Psi:\mathbb{R}^{N}\to \mathrm{SO}(p),\qquad 
    \Psi(\theta)=\prod_{1\le i<j\le p} R_{ij}(\theta_{ij}),
\end{equation*}
\noindent where the product is taken in any fixed order, and each $R_{ij}(\cdot)$ is the planar Givens rotation in the $(i,j)$-plane as given in~\eqref{eqn:givens-rotation}. The BOOOM parameterization from the initial estimate $Q_0$ is then given by 
\begin{equation*}
    \Phi:\mathbb{R}^{N}\to \mathrm{St}(p,d),\qquad 
    \Phi(\theta):=\Psi(\theta)\,Q_0.
\end{equation*}
\noindent We also introduce a \emph{wrapping map} to a compact fundamental box, where each coordinate is restricted modulo $2\pi$, yielding
\begin{equation*}
\mathscr{M}:\mathbb{R}^{N}\to \Theta_0 = [-\pi, \pi]^N,\quad
\big(\mathscr{M}(\varphi)\big)_k = \mathrm{wrap}_{(-\pi,\pi]}\,(\varphi_k),
\end{equation*}
\noindent i.e., $\mathrm{wrap}_{(-\pi,\pi]}(t)=t-2\pi\big\lfloor\frac{t+\pi}{2\pi}\big\rfloor$. Then, we define 
\begin{equation*}
    \widetilde{\Phi}:=\Phi\circ \mathscr{M}:\mathbb{R}^{N}\to \mathrm{St}(p,d),\qquad 
g(\varphi):=f\big(\widetilde{\Phi}(\varphi)\big)=f\big(\Phi(\mathscr{M}(\varphi))\big). 
\end{equation*}

Each planar rotation $R_{ij}(\cdot)$ is $C^\infty$ in its angle and matrix multiplication is smooth, so $\Psi$ and $\Phi$ are $C^\infty$. The wrapping map $\mathscr{M}$ is coordinate-wise modulo $2\pi$ onto $(-\pi,\pi]$; it is $C^\infty$ on the interior of each $2\pi$-periodic cell and only has jump discontinuities on a measure-zero union of coordinate hyperplanes (``wrapping seams''). Consequently, $g=f\circ\widetilde{\Phi}$ is continuous everywhere and $C^1$ at any $\varphi$ whose coordinates avoid the seams. In what follows, $D\Psi(\theta)$ denotes the \emph{differential} (Jacobian) of $\Psi$ at $\theta$, understood as the linear map $D\Psi(\theta):\mathbb{R}^{N}\to T_{\Psi(\theta)}\mathrm{SO}(p)$ defined by
\begin{equation*}
    D\Psi(\theta)[v]\;=\;\left.\frac{d}{dt}\right|_{t=0}\Psi(\theta+t v),\qquad v\in\mathbb{R}^{N}.    
\end{equation*}
\noindent Similarly, $D\Phi(\theta)=dA_{Q_0}|_{\Psi(\theta)}\circ D\Psi(\theta)$ is the differential of $\Phi$ at $\theta$, where the orbit map $A_{Q_0}:\mathrm{SO}(p)\to\mathrm{St}(p,d)$, $U\mapsto UQ_0$, is a submersion. The wrapping map is used for two reasons that also motivate our assumptions in the probabilistic arguments: it compactifies angles to $(-\pi,\pi]^N$, avoiding unbounded angular drift while preserving the image, and it localizes all non-smoothness to a measure-zero set of seams.

\clearpage
\section{Tables from benchmark experiments}\label{app:tables}
\renewcommand{\thefigure}{B.\arabic{figure}} 
\renewcommand{\thetable}{B.\arabic{table}}   
\setcounter{figure}{0}
\setcounter{table}{0}

\begin{table}[!htbp]
\centering
\resizebox{0.8\columnwidth}{!}{%
\begin{tabular}{c|c|cc|cc|cc}
\hline
\multirow{2}{*}{Dimension} & \multirow{2}{*}{Methods} & \multicolumn{2}{c|}{Random} & \multicolumn{2}{c|}{Toeplitz} & \multicolumn{2}{c}{Block Diagonal} \\ \cline{3-8} 
 &  & \multicolumn{1}{l}{median} & \multicolumn{1}{l|}{IQR} & \multicolumn{1}{l}{median} & \multicolumn{1}{l|}{IQR} & \multicolumn{1}{l}{median} & \multicolumn{1}{l}{IQR} \\ \hline
\multirow{2}{*}{$p=20, d=10$} & \multicolumn{1}{l|}{BOOOM} & \textbf{31.8135} & 1.7572 & \textbf{25.644} & 10.0385 & \textbf{27.6086} & 2.5414 \\
 & SDP & 28.4344 & 3.6470 & 13.1205 & 6.2606 & 22.7762 & 13.8479 \\ \hline
\multirow{2}{*}{$p=50, d=40$} & \multicolumn{1}{l|}{BOOOM} & \textbf{145.3178} & 5.0502 & \textbf{160.8581} & 94.1405 & \textbf{133.8839} & 22.9438 \\
 & SDP & 87.7655 & 20.7423 & 53.9247 & 72.0434 & 93.8959 & 58.9204 \\ \hline
\end{tabular}%
}
\caption{Maximization of heterogeneous quadratic forms: Performance comparison of BOOOM and the semidefinite programming (SDP) relaxation across two problem dimensions $(p,d) \in \{(20,10),(50,40)\}$ under three matrix structures (Random, Toeplitz, and Block Diagonal). The table reports the median and interquartile range (IQR) of the achieved objective values across $10$ Monte Carlo replicates. Larger objective values indicate better optimization performance. Bold entries highlight the best-performing method for each 
configuration. Corresponding boxplots summarizing the distribution of results across replicates are shown in Figure~\ref{fig:HQFmaximization}.}
\label{tab:HQFmaximization}
\end{table}

\begin{table}[!htbp]
\centering
\resizebox{0.8\columnwidth}{!}{%
\begin{tabular}{c|c|cc|cc|cc|cc}
\hline
\multirow{2}{*}{Rank} & \multicolumn{1}{c|}{\multirow{2}{*}{Methods}} & \multicolumn{2}{c|}{$n=50, p=10$} & \multicolumn{2}{c|}{$n=70, p=20$} & \multicolumn{2}{c|}{$n=100, p=50$} & \multicolumn{2}{c}{$n=100, p=100$} \\ \cline{3-10} 
 & \multicolumn{1}{c|}{} & \multicolumn{1}{l}{Median} & \multicolumn{1}{l|}{IQR} & \multicolumn{1}{l}{Median} & \multicolumn{1}{l|}{IQR} & \multicolumn{1}{l}{Median} & \multicolumn{1}{l|}{IQR} & \multicolumn{1}{l}{Median} & \multicolumn{1}{l}{IQR} \\ \hline
 & BOOOM & \textbf{0.1895} & 0.0385 & \textbf{0.1244} & 0.0232 & \textbf{0.0612} & 0.0065 & \textbf{0.0385} & 0.0037 \\
$d=5$ & AccAlt projection & \textbf{0.3295} & 0.1380 & \textbf{0.2849} & 0.0580 & \textbf{0.2233} & 0.0723 & \textbf{0.1746} & 0.0561 \\
 & GoDec+ & 0.6386 & 0.3758 & 0.7270 & 0.1905 & 0.5773 & 0.2235 & 0.7075 & 0.5586 \\
 & LRSD TNNSR & 1.2822 & 1.2537 & 0.7119 & 0.4260 & 0.7211 & 1.0122 & 0.6673 & 0.5159 \\ \hline
 & BOOOM & \textbf{0.6267} & 0.3850 & \textbf{0.2182} & 0.0373 & \textbf{0.1024} & 0.0059 & \textbf{0.1196} & 0.0649 \\
$d=10$ & AccAlt projection & \textbf{0.3564} & 0.1291 & \textbf{0.3904} & 0.1355 & \textbf{0.3238} & 0.0571 & \textbf{0.3015} & 0.0860 \\
 & GoDec+ & 0.6515 & 0.4076 & 0.8652 & 0.3018 & 0.8142 & 0.2322 & 0.9016 & 0.5044 \\
\multicolumn{1}{l|}{} & LRSD TNNSR & 1.3058 & 1.0008 & 0.8668 & 0.5693 & 0.8125 & 0.8964 & 0.8704 & 0.3522 \\ \hline
\end{tabular}%
}
\caption{Low-rank and sparse matrix decomposition: Performance comparison of BOOOM, BOOOM-parallel, AccAlt projection, GoDec+, and LRSD-TNNSR for recovering the low-rank component $L$ from synthetic data matrices $X \in \mathbb{R}^{n \times p}$. The table reports the median and interquartile range (IQR) of the mean absolute error (MAE) between the estimated low-rank matrix $\hat{L}$ and the true matrix $L$ across Monte Carlo replicates. Results are shown for rank settings $d \in \{5,10\}$ under four matrix dimensions $(n,p) \in \{(50,10),(70,20),(100,50),(100,100)\}$. Lower MAE indicates more accurate recovery of the low-rank structure. Bold entries highlight the top two best-performing methods within each configuration. Corresponding boxplots summarizing the distribution of results across replicates are shown in Figure~\ref{fig: low-rank decomposition}.}
\label{tab:Low_rank_decomposition}
\end{table}

\begin{table}[]
\centering





\resizebox{\columnwidth}{!}{%
\begin{tabular}{c|c|cc|cc||c|c|cc|cc}
\hline
\multirow{2}{*}{Dimension} & \multirow{2}{*}{Methods} & \multicolumn{2}{c|}{Objective} & \multicolumn{2}{c||}{Amari} & \multirow{2}{*}{Dimension} & \multirow{2}{*}{Methods} & \multicolumn{2}{c|}{Objective} & \multicolumn{2}{c}{Amari} \\ \cline{3-6} \cline{9-12}
 & & Median & IQR & Median & IQR & & & Median & IQR & Median & IQR \\ \hline

\multirow{4}{*}{$n = 50, p = 20$} 
 & BOOOM   & \textbf{8.253} & 0.0320 & 0.357 & 0.0151 & \multirow{4}{*}{$n = 125, p = 50$} 
 & BOOOM   & \textbf{19.585} & 0.4106 & 0.317 & 0.0061 \\
 & FastICA & \textbf{7.520} & 0.0303 & 0.345 & 0.0278 &
 & FastICA & \textbf{18.716} & 0.0639 & 0.315 & 0.0078 \\
 & RunICA  & 7.444          & 0.1276 & \textbf{0.329} & 0.0214 &
 & RunICA  & 15.266          & 0.1914 & \textbf{0.279} & 0.0136 \\
 & Picard  & 5.795          & 0.1046 & \textbf{0.274} & 0.0139 &
 & Picard  & 13.966          & 0.0497 & \textbf{0.271} & 0.0010 \\ \hline

\multirow{4}{*}{$n = 200, p = 20$} 
 & BOOOM   & \textbf{7.882} & 0.0245 & 0.373 & 0.0145 & \multirow{4}{*}{$n = 500, p = 50$} 
 & BOOOM   & \textbf{18.967} & 0.0771 & 0.324 & 0.0043 \\
 & FastICA & \textbf{7.398} & 0.0362 & 0.346 & 0.0123 &
 & FastICA & \textbf{18.540} & 0.1204 & 0.311 & 0.0095 \\
 & RunICA  & 7.310          & 0.0657 & \textbf{0.329} & 0.0132 &
 & RunICA  & 18.272          & 0.2169 & \textbf{0.296} & 0.0075 \\
 & Picard  & 6.584          & 0.0691 & \textbf{0.196} & 0.0145 &
 & Picard  & 16.174          & 0.1982 & \textbf{0.163} & 0.0098 \\ \hline

\end{tabular}}
\caption{Independent Component Analysis: Performance comparison of BOOOM with three widely used ICA algorithms (FastICA, RunICA, and Picard) across four dimensional configurations 
$(n,p) \in \{(50,20), (200,20), (125,50), (500,50)\}$. For each setting, the table reports the median and interquartile range (IQR) over 10 Monte Carlo replicates for two evaluation metrics: the optimization objective value and the Amari distance measuring source recovery accuracy. Lower values indicate better performance for both metrics. Bold entries highlight the top two best-performing methods within each configuration. Corresponding boxplots summarizing the distribution of results across replicates are shown in Figure~\ref{fig:ica_results}.}
\label{tab:ica_results}
\end{table}

\begin{table}[]
\centering
\resizebox{0.8\columnwidth}{!}{%
\begin{tabular}{c|c|cc||c|c|cc}
\hline
Dimension & Methods & Median & IQR & Dimension & Methods & Median & IQR \\ \hline

\multirow{2}{*}{$n=30, p =5$} 
 & BOOOM & -0.0266 & 0.0023 & \multirow{2}{*}{$n=80, p =20$} 
 & BOOOM & \textbf{-0.0827} & 0.0045 \\
 & rotatefactors & -0.0266 & 0.0023 & 
 & rotatefactors & -0.0825 & 0.0053 \\ \hline

\multirow{2}{*}{$n=60, p =5$} 
 & BOOOM & -0.0062 & 0.0002 & \multirow{2}{*}{$n=150, p =20$} 
 & BOOOM & -0.0203 & 0.0017 \\
 & rotatefactors & -0.0062 & 0.0002 & 
 & rotatefactors & -0.0203 & 0.0017 \\ \hline

\multirow{2}{*}{$n=50, p =10$} 
 & BOOOM & -0.0435 & 0.0099 & \multirow{2}{*}{$n=120, p =30$} 
 & BOOOM & \textbf{-0.0824} & 0.0066 \\
 & rotatefactors & -0.0435 & 0.0099 & 
 & rotatefactors & -0.0818 & 0.0063 \\ \hline

\multirow{2}{*}{$n=100, p =10$} 
 & BOOOM & -0.0103 & 0.0007 & \multirow{2}{*}{$n=120, p =30$} 
 & BOOOM & -0.0270 & 0.0015 \\
 & rotatefactors & -0.0103 & 0.0007 & 
 & rotatefactors & -0.0270 & 0.0015 \\ \hline

\end{tabular}}
\caption{Varimax factor rotation: Performance comparison of 
BOOOM and the classical MATLAB implementation \texttt{rotatefactors} 
across eight dimensional configurations 
$(n,p) \in \{(30,5),(60,5),(50,10),(100,10),(80,20),(150,20),(120,30),(200,30)\}$. 
The table reports the median and interquartile range (IQR) of the minimized 
objective value $-V(R)$ over $10$ Monte Carlo replicates. Lower values 
indicate better optimization performance. Bold entries highlight the unique
best-performing method within each configuration. Corresponding boxplots 
summarizing the distribution of results across replicates are shown in 
Figure~\ref{fig:varimax_results}.}
\label{tab:varimax_results}
\end{table}

\begin{table}[]
\centering
\resizebox{0.45\columnwidth}{!}{%
\begin{tabular}{c|c|cc|cc}
\hline
\multirow{2}{*}{$p$} & \multirow{2}{*}{Methods} & \multicolumn{2}{c|}{$m=5$} & \multicolumn{2}{c}{$m=10$} \\ \cline{3-6} 
 &  & Median & IQR & Median & IQR \\ \hline
\multirow{4}{*}{20} & BOOOM & \textbf{7.3909} & 1.0087 & \textbf{17.1263} & 0.6438 \\
 & JacobiAJD & 9.5176 & 0.9737 & 20.5208 & 1.6747 \\
 & RiemGD & 7.4094 & 1.0365 & \textbf{17.1263} & 0.6495 \\
 & RTR & \textbf{7.3909} & 1.0087 & \textbf{17.1263} & 0.6438 \\ \hline
\multirow{4}{*}{50} & BOOOM & \textbf{44.8938} & 0.9084 & \textbf{108.7274} & 2.9948 \\
 & JacobiAJD & 55.8221 & 1.5184 & 124.5165 & 4.2047 \\
 & RiemGD & 45.3430 & 0.6774 & 109.1435 & 3.1041 \\
 & RiemTR & \textbf{44.7387} & 1.0154 & \textbf{108.5678} & 3.0796 \\ \hline
\end{tabular}}
\caption{Orthogonal joint diagonalization: Performance comparison 
of BOOOM, the classical Jacobi AJD algorithm, Riemannian gradient descent 
(RiemGD), and the Riemannian trust-region method (RiemTR) across four problem 
configurations $(p,m) \in \{(20,5),(20,10),(50,5),(50,10)\}$, where $p$ 
denotes the matrix dimension and $m$ the number of jointly diagonalized 
matrices. The table reports the median and interquartile range (IQR) of the 
achieved objective value in \eqref{eqn:AJD} across $10$ Monte Carlo 
replicates. Lower objective values correspond to better minimization of the 
off-diagonal criterion. Bold entries highlight the top two best-performing methods 
within each configuration. Corresponding boxplots summarizing the distribution 
of results across replicates are shown in Figure~\ref{fig:AJD_results}.}
\label{tab:AJD_results}
\end{table}

\begin{table}[]
\centering
\resizebox{0.6\columnwidth}{!}{%
\begin{tabular}{c|c|cc|cc}
\hline
\multirow{2}{*}{Dimension} & \multirow{2}{*}{Methods} & \multicolumn{2}{c|}{\begin{tabular}[c]{@{}c@{}}Objective\\ gap\end{tabular}} & \multicolumn{2}{c}{\begin{tabular}[c]{@{}c@{}}KKT\\ residual\end{tabular}} \\ \cline{3-6} 
 &  & Median & IQR & Median & IQR \\ \hline
\multirow{2}{*}{$p=20$} & BOOOM & \textbf{$<$ 0.0001} & 0.0001 & \textbf{0.0038} & 0.0027 \\
 & RCG & 0.6268 & 0.3295 & 0.6003 & 0.2462 \\ \hline
\multirow{2}{*}{$p=50$} & BOOOM & \textbf{$<$ 0.0001} & $<$ 0.0001 & \textbf{0.0075} & 0.0015 \\
 & RCG & 0.8889 & 0.1998 & 0.8219 & 0.2799 \\ \hline
\multirow{2}{*}{$p=80$} & BOOOM & \textbf{0.0001} & 0.0001 & \textbf{0.0099} & 0.0023 \\
 & RCG & 1.0091 & 0.2142 & 0.9502 & 0.4419 \\ \hline
\multirow{2}{*}{$p=100$} & BOOOM & \textbf{0.0001} & 0.0001 & \textbf{0.0114} & 0.0027 \\
 & RCG & 1.0138 & 0.2936 & 1.0909 & 0.3937 \\ \hline
\end{tabular}}
\caption{Reduced Kohn--Sham Rayleigh--Ritz optimization: Performance 
comparison of BOOOM and the Riemannian conjugate gradient method (RCG) across 
four reduced subspace dimensions $p \in \{20,50,80,100\}$ with $d=2$ extracted 
eigenvectors. The table reports the median and interquartile range (IQR) over 
$10$ Monte Carlo replicates for two evaluation metrics: the objective value 
gap relative to the optimal Rayleigh--Ritz solution obtained from the exact 
eigenvalue decomposition of $H_{\mathrm{red}}$, and the Karush--Kuhn--Tucker 
(KKT) residual measuring violation of the first-order optimality conditions. 
Lower values indicate better solution accuracy and optimality. Bold entries 
highlight the best-performing method within each configuration. Corresponding 
boxplots summarizing the distribution of results across replicates are shown 
in Figure~\ref{fig:ks_results}.}
\label{tab:ks_results}
\end{table}




\vskip 0.2in
\clearpage
\bibliographystyle{plainnat}
\bibliography{sample}

\begin{thebibliography}{80}
\providecommand{\natexlab}[1]{#1}
\providecommand{\url}[1]{\texttt{#1}}
\expandafter\ifx\csname urlstyle\endcsname\relax
  \providecommand{\doi}[1]{doi: #1}\else
  \providecommand{\doi}{doi: \begingroup \urlstyle{rm}\Url}\fi

\bibitem[Ablin et~al.(2018)Ablin, Cardoso, and Gramfort]{ablin2018faster}
P.~Ablin, J.~Cardoso, and A.~Gramfort.
\newblock Faster ica under orthogonal constraint.
\newblock \emph{IEEE International Conference on Acoustics, Speech and Signal Processing (ICASSP)}, page 4464–4468, 2018.

\bibitem[Absil et~al.(2008)Absil, Mahony, and Sepulchre]{AbsilMahonySepulchre2008}
P.~Absil, R.~Mahony, and R.~Sepulchre.
\newblock \emph{Optimization Algorithms on Matrix Manifolds}.
\newblock Princeton University Press, Princeton, NJ, 2008.

\bibitem[Amari et~al.(1995)Amari, Cichocki, and Yang]{Amari1995}
S.~Amari, A.~Cichocki, and H.~Yang.
\newblock A new learning algorithm for blind signal separation.
\newblock \emph{Advances in Neural Information Processing Systems}, 8:\penalty0 757--763, 1995.

\bibitem[{Athalye et al.}(2018)]{athalye2018synthesizing}
{Athalye et al.}
\newblock Synthesizing robust adversarial examples.
\newblock In \emph{International conference on machine learning}, pages 284--293. PMLR, 2018.

\bibitem[{Avram et al.}(2025)]{avram2025metabolomic}
{Avram et al.}
\newblock Metabolomic exploration of colorectal cancer through amino acids and acylcarnitines profiling of serum samples.
\newblock \emph{Cancers}, 17\penalty0 (3):\penalty0 427, 2025.

\bibitem[Beck(2017)]{Beck2017}
A.~Beck.
\newblock \emph{First-Order Methods in Optimization}, volume~25 of \emph{MOS-SIAM Series on Optimization}.
\newblock Society for Industrial and Applied Mathematics, Philadelphia, PA, 2017.

\bibitem[{Bednarz-Misa et al.}(2020)]{bednarz2020arginine}
{Bednarz-Misa et al.}
\newblock L-arginine/no pathway metabolites in colorectal cancer: relevance as disease biomarkers and predictors of adverse clinical outcomes following surgery.
\newblock \emph{Journal of Clinical Medicine}, 9\penalty0 (6):\penalty0 1782, 2020.

\bibitem[Bell and Sejnowski(1995)]{bell1995information}
A.~Bell and T.~Sejnowski.
\newblock An information-maximization approach to blind separation and blind deconvolution.
\newblock \emph{Neural Computation}, 7\penalty0 (6):\penalty0 1129--1159, 1995.

\bibitem[{Berger-Tal et al.}(2014)]{BergerTal2014}
{Berger-Tal et al.}
\newblock The exploration-exploitation dilemma: A multidisciplinary framework.
\newblock \emph{PLoS ONE}, 9\penalty0 (4):\penalty0 e95693, 2014.

\bibitem[Bernasocchi and Mostoslavsky(2024)]{bernasocchi2024subcellular}
T.~Bernasocchi and R.~Mostoslavsky.
\newblock Subcellular one carbon metabolism in cancer, aging and epigenetics.
\newblock \emph{Frontiers in Epigenetics and Epigenomics}, 2:\penalty0 1451971, 2024.

\bibitem[Bethke(1980)]{Bethke1980}
A.~Bethke.
\newblock Genetic algorithms as function optimizers, 1980.

\bibitem[Billingsley(1995)]{billingsley1995probability}
P.~Billingsley.
\newblock \emph{Probability and Measure}.
\newblock Wiley Series in Probability and Mathematical Statistics. John Wiley \& Sons, Hoboken, NJ, 3rd edition, 1995.

\bibitem[Boumal(2023)]{Boumal2023}
N.~Boumal.
\newblock \emph{An Introduction to Optimization on Smooth Manifolds}.
\newblock Cambridge University Press, Cambridge, 2023.

\bibitem[{Boumal et al.}(2014)]{boumal2014manopt}
{Boumal et al.}
\newblock Manopt, a {Matlab} toolbox for optimization on manifolds.
\newblock \emph{Journal of Machine Learning Research}, 15\penalty0 (42):\penalty0 1455--1459, 2014.

\bibitem[{Bryniarski et al.}(2021)]{bryniarski2021evading}
{Bryniarski et al.}
\newblock Evading adversarial example detection defenses with orthogonal projected gradient descent.
\newblock \emph{arXiv preprint arXiv:2106.15023}, 2021.

\bibitem[Burer and Monteiro(2003)]{burer2003nonlinear}
S.~Burer and R.~Monteiro.
\newblock A nonlinear programming algorithm for solving semidefinite programs via low-rank factorization.
\newblock \emph{Mathematical Programming}, 95:\penalty0 329--357, 2003.

\bibitem[Cai et~al.(2019)Cai, Cai, and Wei]{cai2019accelerated}
H.~Cai, J.~Cai, and K.~Wei.
\newblock Accelerated alternating projections for robust principal component analysis.
\newblock \emph{Journal of Machine Learning Research}, 20\penalty0 (20):\penalty0 1--33, 2019.

\bibitem[{Candes et al.}(2011)]{candes2011robust}
{Candes et al.}
\newblock Robust principal component analysis?
\newblock \emph{Journal of the ACM}, 58\penalty0 (3):\penalty0 1--37, 2011.

\bibitem[Cardoso(1998)]{cardoso1998blind}
J.~Cardoso.
\newblock Blind signal separation: statistical principles.
\newblock \emph{Proceedings of the IEEE}, 86\penalty0 (10):\penalty0 2009--2025, 1998.

\bibitem[Cardoso and Souloumiac(1996)]{cardoso1996jacobi}
J.~Cardoso and A.~Souloumiac.
\newblock Jacobi angles for simultaneous diagonalization.
\newblock \emph{SIAM Journal on Matrix Analysis and Applications}, 17\penalty0 (1):\penalty0 161--164, 1996.

\bibitem[{Coker et al.}(2022)]{coker2022altered}
{Coker et al.}
\newblock Altered gut metabolites and microbiota interactions are implicated in colorectal carcinogenesis and can be non-invasive diagnostic biomarkers.
\newblock \emph{Microbiome}, 10\penalty0 (1):\penalty0 35, 2022.

\bibitem[Collette and Siarry(2003)]{collette2003multiobjective}
Y.~Collette and P.~Siarry.
\newblock \emph{Multiobjective Optimization: Principles and Case Studies}.
\newblock Springer-Verlag, Berlin, Heidelberg, 2003.
\newblock \doi{10.1007/978-3-662-08883-8}.

\bibitem[Conn et~al.(2009)Conn, Scheinberg, and Vicente]{Conn2009}
A.~Conn, K.~Scheinberg, and L.~Vicente.
\newblock Introduction to derivative-free optimization.
\newblock \emph{Mathematics without boundaries: Surveys in interdisciplinary research, MOS-SIAM Series on Optimization, SIAM}, 2009.

\bibitem[Das(2021)]{Das2021}
P.~Das.
\newblock Recursive modified pattern search on high-dimensional simplex : A blackbox optimization technique.
\newblock \emph{The Indian Journal of Statistics - Sankhya B}, 83:\penalty0 440--483, 2021.

\bibitem[Das(2023)]{Das2023RMPSH}
P.~Das.
\newblock Black-box optimization on hyper-rectangle using recursive modified pattern search and application to {ROC}-based classification problem.
\newblock \emph{Sankhya B}, 85:\penalty0 365--404, 2023.
\newblock \doi{10.1007/s13571-023-00312-w}.

\bibitem[Das and Ghosal(2017)]{Das2017b}
P.~Das and S.~Ghosal.
\newblock Analyzing ozone concentration by bayesian spatio‐temporal quantile regression.
\newblock \emph{Environmetrics}, 28\penalty0 (4):\penalty0 e2443, 2017.

\bibitem[{Das et al.}(2022)]{Das2022}
{Das et al.}
\newblock Estimating the optimal linear combination of predictors using spherically constrained optimization.
\newblock \emph{BMC Bioinformatics}, 23\penalty0 (Suppl 3):\penalty0 436, 2022.

\bibitem[{Das et al.}(2023{\natexlab{a}})]{Das2023}
{Das et al.}
\newblock Utilizing biologic disease-modifying anti-rheumatic treatment sequences to subphenotype rheumatoid arthritis.
\newblock \emph{Arthritis Research and Therapy}, 25\penalty0 (1):\penalty0 1--7, 2023{\natexlab{a}}.

\bibitem[{Das et al.}(2023{\natexlab{b}})]{Das2023MsiCOR}
{Das et al.}
\newblock Clustering sequence data with mixture {Markov} chains with covariates using multiple simplex constrained optimization routine {(MSiCOR)}.
\newblock \emph{Journal of Computational and Graphical Statistics}, 33\penalty0 (2):\penalty0 379--392, 2023{\natexlab{b}}.

\bibitem[Edelman et~al.(1998)Edelman, Arias, and Smith]{Edelman1998}
A.~Edelman, T.~Arias, and S.~Smith.
\newblock The geometry of algorithms with orthogonality constraints.
\newblock \emph{SIAM Journal on Matrix Analysis and Applications}, 20\penalty0 (2):\penalty0 303--353, 1998.

\bibitem[{Fei et al.}(2022)]{fei2022vit}
{Fei et al.}
\newblock O-vit: Orthogonal vision transformer.
\newblock \emph{arXiv preprint arXiv:2201.12133}, 2022.

\bibitem[{Feng et al.}(2019)]{feng2019supervised}
{Feng et al.}
\newblock Supervised discriminative sparse {PCA} for com-characteristic gene selection and tumor classification on multiview biological data.
\newblock \emph{IEEE Transactions on Neural Networks and Learning Systems}, 30\penalty0 (10):\penalty0 2926--2937, 2019.

\bibitem[Fermi and Metropolis(1952)]{Fermi1952}
E.~Fermi and N.~Metropolis.
\newblock Numerical solution of a minimum problem. los alamos unclassified report la–1492.
\newblock \emph{Los Alamos National Laboratory, Los Alamos, USA}, 1952.

\bibitem[Fraser(1957)]{Fraser1957}
A.~Fraser.
\newblock Simulation of genetic systems by automatic digital computers.
\newblock \emph{Australian Journal of Biological Sciences}, 10:\penalty0 484--491, 1957.

\bibitem[Geris(2012)]{Geris2012}
L.~Geris.
\newblock \emph{Computational Modeling in Tissue Engineering}.
\newblock Springer, 2012.

\bibitem[Gilman et~al.(2025)Gilman, Burer, and Balzano]{gilman2025semidefinite}
K.~Gilman, S.~Burer, and L.~Balzano.
\newblock A semidefinite relaxation for sums of heterogeneous quadratic forms on the {Stiefel} manifold.
\newblock \emph{SIAM Journal on Matrix Analysis and Applications}, 46\penalty0 (2):\penalty0 1091--1116, 2025.

\bibitem[Goedecker(1999)]{goedecker1999linear}
S.~Goedecker.
\newblock Linear scaling electronic structure methods.
\newblock \emph{Reviews of Modern Physics}, 71:\penalty0 1085--1123, 1999.

\bibitem[{Guo et al.}(2017)]{guo2017godec+}
{Guo et al.}
\newblock {GoDec}+: Fast and robust low-rank matrix decomposition based on maximum correntropy.
\newblock \emph{IEEE Transactions on Neural Networks and Learning Systems}, 29\penalty0 (6):\penalty0 2323--2336, 2017.

\bibitem[{Hashim et al.}(2020)]{hashim2020global}
{Hashim et al.}
\newblock Global metabolomics profiling of colorectal cancer in malaysian patients.
\newblock \emph{BioImpacts: BI}, 11\penalty0 (1):\penalty0 33, 2020.

\bibitem[{He et al.}(2016)]{he2016deep}
{He et al.}
\newblock Deep residual learning for image recognition.
\newblock In \emph{Proceedings of the IEEE conference on computer vision and pattern recognition}, pages 770--778, 2016.

\bibitem[{Hong et al.}(2021)]{hong2021heppcat}
{Hong et al.}
\newblock {HePPCAT}: Probabilistic {PCA} for data with heteroscedastic noise.
\newblock \emph{IEEE Transactions on Signal Processing}, 69:\penalty0 4819--4834, 2021.

\bibitem[{Hu et al.}(2020)]{hu2020moving}
{Hu et al.}
\newblock Moving object detection based on non-convex rpca with segmentation constraint.
\newblock \emph{IEEE Access}, 8:\penalty0 41026--41036, 2020.

\bibitem[{Huang et al.}(2018)]{huang2018orthogonal}
{Huang et al.}
\newblock Orthogonal weight normalization: Solution to optimization over multiple dependent stiefel manifolds in deep neural networks.
\newblock \emph{AAAI Technical Track: Machine Learning}, 32\penalty0 (1), 2018.

\bibitem[Hurwitz(1963)]{Hurwitz1963}
A.~Hurwitz.
\newblock Ueber die erzeugung der invarianten durch integration.
\newblock In \emph{Mathematische Werke}, pages 546--564. Springer, 1963.

\bibitem[Hyvarinen(1999)]{hyvarinen1999fast}
A.~Hyvarinen.
\newblock Fast and robust fixed-point algorithms for independent component analysis.
\newblock \emph{IEEE Transactions on Neural Networks}, 10\penalty0 (3):\penalty0 626--634, 1999.

\bibitem[Hyvarinen et~al.(2001)Hyvarinen, Karhunen, and Oja]{hyvarinen2000independent}
A.~Hyvarinen, J.~Karhunen, and E.~Oja.
\newblock \emph{Independent Component Analysis}.
\newblock Wiley, 2001.

\bibitem[Jennrich(2001)]{jennrich2001simple}
R.~Jennrich.
\newblock A simple general procedure for orthogonal rotation.
\newblock \emph{Psychometrika}, 66:\penalty0 289--306, 2001.

\bibitem[{Jiang et al.}(2022)]{jiang2022givens}
{Jiang et al.}
\newblock Givens coordinate descent methods for rotation matrix learning in trainable embedding indexes.
\newblock In \emph{Proceedings of the International Conference on Learning Representations (ICLR)}, 2022.

\bibitem[{Jiang et al.}(2026)]{jiang2026diversifying}
{Jiang et al.}
\newblock Diversifying counterattacks: Orthogonal exploration for robust cllp inference.
\newblock In \emph{Proceedings of the AAAI Conference on Artificial Intelligence}, volume~40, pages 5359--5368, 2026.

\bibitem[{Jiao et al.}(2022)]{Jiao2022kssolv}
{Jiao et al.}
\newblock {KSSOLV} 2.0: An efficient matlab toolbox for solving the kohn-sham equations with plane-wave basis set.
\newblock \emph{Computer Physics Communications}, 279:\penalty0 108424, 2022.

\bibitem[Jolliffe(2002)]{jolliffe2002principal}
I.~Jolliffe.
\newblock \emph{Principal Component Analysis}.
\newblock Springer, 2002.

\bibitem[Kaiser(1958)]{kairser1958varimax}
H.~Kaiser.
\newblock The varimax criterion for analytic rotation in factor analysis.
\newblock \emph{Psychometrika}, 23:\penalty0 187--200, 1958.

\bibitem[Kim et~al.(2026)Kim, Xia, and Das]{kim2026smart}
B.~Kim, Z.~Xia, and P.~Das.
\newblock {SMART-MC}: Sparse matrix estimation with covariate-based transitions in {Markov} chain modeling of multiple sclerosis disease modifying therapies.
\newblock \emph{Journal of the American Statistical Association}, 121\penalty0 (553):\penalty0 85–99, 2026.

\bibitem[Kirkpatrick et~al.(1983)Kirkpatrick, Gelatt, and Vecchi]{kirkpatrick1983optimization}
S.~Kirkpatrick, C.~Gelatt, and M.~Vecchi.
\newblock Optimization by simulated annealing.
\newblock \emph{Science}, 220\penalty0 (4598):\penalty0 671--680, 1983.

\bibitem[Kohn and Sham(1965)]{kohn1965self}
W.~Kohn and L.~Sham.
\newblock Self-consistent equations including exchange and correlation effects.
\newblock \emph{Physical Review}, 140:\penalty0 A1133--A1138, 1965.

\bibitem[Kolda et~al.(2003)Kolda, Lewis, and Torczon]{Kolda2003}
G.~Kolda, R.~Lewis, and V.~Torczon.
\newblock Optimization by direct search: New perspectives on some classical and modern methods.
\newblock \emph{SIAM Review}, 45\penalty0 (3):\penalty0 385--482, 2003.
\newblock \doi{10.1137/S003614450242889}.

\bibitem[Lee(2018)]{Lee2018}
J.~Lee.
\newblock \emph{Introduction to Riemannian Manifolds}, volume 176 of \emph{Graduate Texts in Mathematics}.
\newblock Springer, Cham, 2 edition, 2018.

\bibitem[{Li et al.}(2019)]{li2019orthogonal}
{Li et al.}
\newblock Orthogonal deep neural networks.
\newblock \emph{IEEE transactions on pattern analysis and machine intelligence}, 43\penalty0 (4):\penalty0 1352--1368, 2019.

\bibitem[{Li et al.}(2022)]{li2022metabolomic}
{Li et al.}
\newblock Metabolomic comparison of patients with colorectal cancer at different anticancer treatment stages.
\newblock \emph{Frontiers in Oncology}, 11:\penalty0 574318, 2022.

\bibitem[{Lin et al.}(2012)]{lin2012adaptive}
{Lin et al.}
\newblock Adaptive local basis set for kohn--sham density functional theory in a discontinuous galerkin framework: Total energy calculation.
\newblock \emph{Journal of Computational Physics}, 231\penalty0 (4):\penalty0 2140--2154, 2012.

\bibitem[Luo et~al.(2025)Luo, Wang, and Ren]{luo2025direct}
K.~Luo, T.~Wang, and X.~Ren.
\newblock Direct minimization on the complex stiefel manifold in kohn-sham density functional theory for finite and extended systems.
\newblock \emph{Computer Physics Communications}, 312:\penalty0 109596, 2025.

\bibitem[Nesterov(2004)]{Nesterov2004}
Y.~Nesterov.
\newblock \emph{Introductory Lectures on Convex Optimization: A Basic Course}, volume~87 of \emph{Applied Optimization}.
\newblock Kluwer Academic Publishers, Boston, MA, 2004.

\bibitem[{Netrapalli et al.}(2014)]{netrapalli2014nonconvex}
{Netrapalli et al.}
\newblock Non-convex robust pca.
\newblock In \emph{Advances in Neural Information Processing Systems (NeurIPS)}, volume~27, 2014.

\bibitem[Nielsen and Chuang(2010)]{nielsen2010quantum}
M.~Nielsen and I.~Chuang.
\newblock \emph{Quantum computation and quantum information}.
\newblock Cambridge university press, 2010.

\bibitem[Pham(2001)]{pham2001joint}
D.~Pham.
\newblock Joint approximate diagonalization of positive definite hermitian matrices.
\newblock \emph{SIAM Journal on Matrix Analysis and Applications}, 22\penalty0 (4):\penalty0 1136--1152, 2001.

\bibitem[Rios and Sahinidis(2013)]{Rios2013}
L.~Rios and N.~Sahinidis.
\newblock Derivative-free optimization: a review of algorithms and comparison of software implementations.
\newblock \emph{Journal of Global Optimization}, 56\penalty0 (3):\penalty0 1247--1293, 2013.

\bibitem[Roy et~al.(2024)Roy, Basu, and Ghosh]{roy2024robust}
S.~Roy, A.~Basu, and A.~Ghosh.
\newblock Robust principal component analysis using density power divergence.
\newblock \emph{Journal of Machine Learning Research}, 25\penalty0 (324):\penalty0 1--40, 2024.

\bibitem[Ruder(2016)]{ruder2016overview}
S.~Ruder.
\newblock An overview of gradient descent optimization algorithms.
\newblock \emph{arXiv preprint arXiv:1609.04747}, 2016.

\bibitem[Saad(2011)]{saad2011numerical}
Y.~Saad.
\newblock \emph{Numerical Methods for Large Eigenvalue Problems}.
\newblock SIAM, 2nd edition, 2011.

\bibitem[{Shi et al.}(2020)]{shi2020supervised}
{Shi et al.}
\newblock Supervised discriminative sparse {PCA} with adaptive neighbors for dimensionality reduction.
\newblock In \emph{Proceedings of the International Joint Conference on Neural Networks (IJCNN)}, pages 1--8. IEEE, 2020.

\bibitem[{Shi et al.}(2022)]{shi2022potential}
{Shi et al.}
\newblock Potential roles of serum {ATPase} and {AMPase} in predicting diagnosis of colorectal cancer patients.
\newblock \emph{Bioengineered}, 13\penalty0 (6):\penalty0 14204--14214, 2022.

\bibitem[Surjanovic and Bingham(2013)]{surjanovic2013virtual}
S.~Surjanovic and D.~Bingham.
\newblock Virtual library of simulation experiments: Test functions and datasets, 2013.
\newblock [Online]. Available: \url{https://www.sfu.ca/~ssurjano/optimization.html}.

\bibitem[Tan and Ghosal(2020)]{tan2020}
Q.~Tan and S.~Ghosal.
\newblock Bayesian quantile regression in differential equation models.
\newblock 339:\penalty0 321--334, 2020.

\bibitem[{Tevini et al.}(2022)]{tevini2022changing}
{Tevini et al.}
\newblock Changing metabolic patterns along the colorectal adenoma--carcinoma sequence.
\newblock \emph{Journal of Clinical Medicine}, 11\penalty0 (3):\penalty0 721, 2022.

\bibitem[Torczon(1997)]{torczon1997convergence}
V.~Torczon.
\newblock On the convergence of pattern search algorithms.
\newblock \emph{SIAM Journal on Optimization}, 7\penalty0 (1):\penalty0 1--25, 1997.

\bibitem[{Udo et al.}(2020)]{udo2020urinary}
{Udo et al.}
\newblock Urinary charged metabolite profiling of colorectal cancer using capillary electrophoresis-mass spectrometry.
\newblock \emph{Scientific Reports}, 10\penalty0 (1):\penalty0 21057, 2020.

\bibitem[{Vorontsov et al.}(2017)]{vorontsov2017}
{Vorontsov et al.}
\newblock On orthogonality and learning recurrent networks with long term dependencies.
\newblock In \emph{International Conference on Machine Learning (ICML)}, 2017.

\bibitem[{Xue et al.}(2019)]{xue2019low}
{Xue et al.}
\newblock Low-rank and sparse matrix decomposition via the truncated nuclear norm and a sparse regularizer.
\newblock \emph{The Visual Computer}, 35\penalty0 (11):\penalty0 1549--1566, 2019.

\bibitem[{Yang et al.}(2009)]{Yang2009kssolv}
{Yang et al.}
\newblock {KSSOLV}-a matlab toolbox for solving the kohn--sham equations.
\newblock \emph{ACM Transactions on Mathematical Software}, 36\penalty0 (2), 2009.

\bibitem[{Zhang et al.}(2023)]{Zhang2023}
{Zhang et al.}
\newblock Methods to balance the exploration and exploitation in differential evolution from different scales: A survey.
\newblock \emph{Neurocomputing}, 561:\penalty0 126899, 2023.

\end{thebibliography}

\end{document}